\numberwithin{equation}{section}
\numberwithin{table}{section}
\numberwithin{figure}{section}
\newtheorem{theorem}{Theorem}[section]
\newtheorem{corollary}[theorem]{Corollary}\newtheorem{proposition}[theorem]{Proposition}
\newtheorem{lemma}[theorem]{Lemma}
\theoremstyle{definition}
\newtheorem{definition}[theorem]{Definition}
\newtheorem{remark}[theorem]{Remark}
\newtheorem{example}[theorem]{Example}
\newcommand{\id}{\mathop{\textrm{\upshape{id}}}}
\renewcommand{\Re}{\mathop{\textrm{\upshape{Re}}}}
\renewcommand{\Im}{\mathop{\textrm{\upshape{Im}}}}
\newcommand{\orange}[1]{{\color{Orange}#1}}
\newcommand{\supp}{\mathop{\textrm{\upshape{supp}}}}
\renewcommand{\phi}{\varphi}
\newcommand{\eps}{\varepsilon}
\newcommand{\forget}[1]{}
\newcommand{\spk}[1]{\langle #1 \rangle}
\newcommand{\R}{\mathbb{R}}
\newcommand{\rz}{\mathbb{R}}
\newcommand{\C}{\mathbb{C}}
\newcommand{\N}{\mathbb{N}}
\newcommand{\E}{\mathbb{E}_\lambda}
\newcommand{\F}{\mathbb{F}_\lambda}
\newcommand{\BUC}{\mathop{\textrm{\upshape{BUC}}}}
\renewcommand{\epsilon}{\varepsilon}
\newcommand{\wt}[1]{\widetilde{#1}}
\newcommand{\y}{y} 
\newcommand{\V}{V} 
\newcommand{\A}{A} 
\newcommand{\B}{B}
\newcommand{\acf}{a} 
\newcommand{\bcf}{b}
\newcommand{\diffeo}{\vartheta} 
\newcommand{\vhs}{v} 
\newcommand{\EE}{\mathcal E_\lambda} 
\newcommand{\FF}{\mathcal F_\lambda} 
\newcommand{\Lhs}{L} 
\newcommand{\Lom}{L^\Omega} 
\newcommand{\phiom}{\varphi^\Omega} 
\newcommand{\psihs}{\psi} 
\newcommand{\psiom}{\psi^\Omega} 
\begin{document}
\title[Boundary value problems with rough boundary data]{Boundary value problems \\ with rough boundary data}

\author{Robert Denk}
\address{R.\ Denk, Universit\"at Konstanz, Fachbereich f\"ur Mathematik und Statistik, Konstanz, Germany}
\email{robert.denk@uni-konstanz.de}

\author{David Plo\ss}
\address{D.\ Plo\ss, Universit\"at Konstanz, Fachbereich f\"ur Mathematik und Statistik, Konstanz, Germany}
\email{david.ploss@uni-konstanz.de}

\author{Sophia Rau}
\address{S.\ Rau, Universit\"at Konstanz, Fachbereich f\"ur Mathematik und Statistik, Konstanz, Germany}
\email{sophia.rau@uni-konstanz.de}

\author{J{\"o}rg Seiler}
\address{J. Seiler, Università di Torino, Dipartimento di Matematica, V. Carlo Alberto 10, 10123 Torino, Italy}
\email{joerg.seiler@unito.it}

\begin{abstract}
We consider linear boundary value problems for higher-order para\-meter-elliptic equations, where the boundary data do  not belong to the classical trace spaces.
We employ a class of Sobolev spaces of mixed smoothness that admits a generalized boundary trace with values in Besov spaces of negative order.
We prove unique solvability for rough boundary data in the half-space and in sufficiently smooth domains. As an application, we show that the operator related to the linearized Cahn--Hilliard equation with dynamic boundary conditions generates a holomorphic semigroup in $L^p(\R^n_+)\times L^p(\R^{n-1})$. 
\end{abstract}

\keywords{Boundary value problem, anisotropic Sobolev space, generalized trace, dynamic boundary condition, holomorphic semigroup}
\subjclass[2020]{35J40 (primary); 46E35, 47D06, 35K35 (secondary)}
\date{September 17, 2020}

\date{\today}

\maketitle

\addtocounter{section}{0}

\section{Introduction}
In the present paper, we study linear differential boundary value problems of the form
\begin{equation}
  \label{0-1}
  \begin{aligned}
    (\lambda - A) u & = f \quad \text{ in } \Omega,\\
    B_j u & = g_j \quad (j=1,\dots,m)\; \text{ on }\Gamma,
  \end{aligned}
\end{equation}
where $\Omega$ is either the half-space $\R^n_+ := \{x\in\R^n: x_n>0\}$ or a domain in $\R^n$
with compact and sufficiently smooth boundary $\Gamma$. 
Moreover, $A$ is a differential operator of order $2m$ and $B_j$ is a boundary operator of order $m_j<2m$ for $j=1,\ldots, m$.
Whereas for sufficiently smooth $f$ and $g_j$ this problem can be solved by classical theory, we focus on rough boundary data $g_1,\dots,g_m$. In particular, we want to solve \eqref{0-1}
for $f\in L^p(\Omega)$ but $g_j\in B_{pp}^{s_j}(\Gamma)$, where $s_j$ may be zero or even negative.
For such rough boundary data, even the formulation of the boundary conditions needs justification: It is
known that the classical trace $u\mapsto u|_\Gamma$, first defined for smooth functions, has a
continuous extension to an operator $\gamma_0\colon H^{s}_p(\Omega)\to B_{pp}^{s-1/p}(\Gamma)$
\emph{if and only if} $s>\frac 1p$ (\cite{Johnsen-Sickel08}). Nevertheless, it is possible to define a continuous trace
on subspaces of $H^s_p(\Omega)$ for $s\le \frac1p$, see, e.g.,
Lions--Magenes (\cite{Lions-Magenes62}, \cite{Lions-Magenes72}) and Roitberg (\cite{Roitberg96}, \cite{Roitberg99}).
In the present paper, we will introduce a class of Sobolev spaces $H_p^{s,\sigma}(\R^n)$
of anisotropic type, for which the trace exists as a continuous operator, following
the ideas from Grubb (\cite{Grubb96}, \cite{Grubb-Kokholm93}).

The motivation to study problem \eqref{0-1} with rough boundary data is two-fold: The first motivation arises
in the study of
stochastic partial differential equations (SPDEs) with boundary noise. Exemplarily, we
mention here \cite{Schnaubelt-Veraar11} and  \cite{Mohammed-Bloemker16} for parabolic equations and reaction-diffusion systems with Neumann boundary conditions,
\cite{Brzezniak-Goldys-Peszat-Russo15} for the heat equation with Dirichlet boundary
conditions,
\cite{Agresti-Hieber-Hussein-Saal21} and \cite{Binz-Hieber-Hussein-Saal20}
for a free boundary value problem in fluid mechanics, and
\cite{Chueshov-Schmalfuss07} for dynamical boundary conditions. The key step in the
analysis of these problems is to understand the properties of the solution operator
to the boundary value problem (formulated for Neumann boundary conditions)
\begin{equation}
  \label{0-2}
  \begin{alignedat}{4}
  \partial_t u - A u & = 0 &&\text{ in }(0,\infty) \times \Omega,\\
  \partial_\nu u & = \xi &&\text{ on } (0, \infty) \times \Gamma,
  \end{alignedat}
\end{equation}
where $\xi$ stands for the boundary noise and $\partial_\nu$ denotes the derivative in the direction of the outward pointing unit normal vector of the boundary $\Gamma$. As it is known that the paths of Gaussian white noise belong with probability one to some Besov space with negative regularity
(see, e.g., \cite{Aziznejad-Fageot20}, \cite{Hummel21}, \cite{Veraar11}), this fits into the setting of \eqref{0-1}
with $f=0$. In the context of SPDEs, the solution operator is often denoted as the Neumann (or Dirichlet) map.

The second motivation for studying \eqref{0-1} arises from boundary value problems with
 Wentzell or dynamical boundary conditions.  As a prototype example, we consider
 the heat equation with Wentzell boundary conditions
\begin{equation}
  \label{0-3}
  \begin{alignedat}{4}
  \partial_t u - \Delta u & = 0 &&\text{ in }(0,\infty) \times \Omega,\\
  \Delta u + \partial_\nu u & = 0 &&\text{ on }(0, \infty) \times \Gamma,\\
  u|_{t=0} & = u_0 &&\text{ in }\Omega.
  \end{alignedat}
\end{equation}
Replacing $\Delta u=\partial_t u$ in the boundary condition, we obtain the dynamic
boundary condition $\partial_t u +\partial_\nu u =0$. In a standard approach, one decouples $u=:u_1$ and $u|_\Gamma =:u_2$ and
obtains a resolvent problem of the form
\begin{equation}
  \label{0-4}
  \begin{alignedat}{4}
  \lambda u_1 - \Delta u_1 & = f &&\text{ in }\Omega,\\
  \lambda u_2 + \partial_\nu u_1 & = g &&\text{ on }\Gamma
 \end{alignedat}
\end{equation}
with the additional condition $u_1|_{\Gamma} = u_2$. The corresponding operator
acts on the tuple $u=(u_1,u_2)$ as $Au = (\Delta u_1, -\partial_\nu u_1)$.
From the point of view of maximal regularity for \eqref{0-3}, the basic space for
this operator would be $L^p(\Omega)\times B_{pp}^{1-1/p}(\Gamma)$, where the
second component is the trace space of $H_p^2(\Omega)$ for the Neumann boundary operator.
In fact, for boundary value problems with dynamic boundary conditions, the generation
of a holomorphic semigroup in trace spaces was shown in \cite{Pruess-Racke-Zheng06} for
the Cahn--Hilliard equation and in \cite{Denk-Pruess-Zacher08} for a general class of problems. However, a more natural basic space for the operator $A$ is $L^p(\Omega)\times L^p(\Gamma)$. At least for $p=2$, form methods can easily lead to the proof of
the generation of a holomorphic semigroup. This was elaborated, e.g., for second-order
equations in \cite{Arendt-Metafune-Pallara-Romanelli03} and in
\cite{Warma13}, for the Bi-Laplacian
in \cite{Denk-Kunze-Ploss21},  and in an abstract setting in  \cite{Engel-Fragnelli11}.
For the analysis in the basic space $L^p(\Omega)\times L^p(\Gamma)$, one has to deal
with boundary values in $L^p$-spaces, which again is not covered by classical theory.
In the present paper, we will apply our solution theory to  the Cahn--Hilliard equation
with dynamic boundary conditions.

Our analysis of \eqref{0-1} starts with the observation that (at least in the smooth
situation) this problem fits
into the framework of Boutet de Monvel's calculus of pseudodifferential boundary value problems. In this calculus, the solution operator for $f=0$ is called a Poisson operator,
and such operators have good mapping properties in the complete scale of Sobolev spaces.
This follows, e.g., from the work of Grubb (\cite{Grubb95}, \cite{Grubb96}) and
Grubb and Kokholm \cite{Grubb-Kokholm93}.
However, the classical trace only exists for sufficiently smooth functions. Therefore, one has to define an appropriate generalization of the trace on the boundary.
Versions of generalized traces were considered by Lions and Magenes (\cite{Lions-Magenes62},
\cite{Lions-Magenes72}), using duality, and by Roitberg (\cite{Roitberg96},
\cite{Roitberg99}), using completion of smooth functions, see also Remark~\ref{1.9}
below.

In this paper, we use another approach to a generalized trace by considering a new class of Sobolev
spaces with mixed smoothness (see also \cite{Grubb96} for $p=2$). These spaces
differ from anisotropic Sobolev spaces in the sense of  \cite{Johnsen-Sickel08} and  \cite{Triebel06} and from spaces with dominating mixed smoothness in the sense
of \cite{Schmeisser07} and \cite{Triebel19}. For this class of Sobolev spaces,
both the existence of a continuous trace and the unique solvability of
parameter-elliptic model problems in the whole space and in the half-space follow
immediately from known results. However, the passage from model problems (i.e.,
constant coefficients and no lower-order terms) to variable coefficients is
not standard. It requires the application of an elaborate localization procedure, even for problems on the half-space.
In domains, the definition of the Sobolev spaces with mixed smoothness is not canonical. Therefore, we work with classical Sobolev spaces in domains but employ local embeddings  into our spaces of mixed smoothness. The necessity to estimate certain commutators leads to restrictions on the orders of the involved spaces (see Lemma \ref{3.10}); however these restrictions still allow to deal with boundary values
in $L^p(\Gamma)$, for example.

The paper is structured as follows.
In Section~\ref{sec2}, we define and analyze Sobolev spaces of mixed smoothness,
including parameter-dependent norms. We show trace results and typical embeddings.
In Lemma~\ref{1.5}, interpolation properties are shown which seem not to follow
immediately from known results. Section~\ref{sec3} deals with boundary value
problems in the half-space. 
The main result
(Theorem~\ref{2.10}) gives unique solvability of parameter-elliptic boundary
value problems under appropriate smoothness assumptions on the coefficients. Note that
we do not consider the infinitely smooth setting and thus pseudodifferential theory
cannot be applied. Here, the boundary data may belong to Besov spaces with arbitrary
low order. As a corollary, one obtains unique solvability in classical Sobolev spaces
(Corollary~\ref{2.11}). The situation in domains is studied in Section~\ref{sec4}. The main result
(Theorem~\ref{3.12}) yields unique solvability in classical Sobolev spaces
for rough boundary data. 
Finally, in Section~\ref{sec5}, we apply the above results to the linearized
Cahn--Hilliard equation with dynamic boundary conditions. We show that the related
operator $A$ generates a holomorphic semigroup in $L^p(\R^n_+)\times L^p(\R^{n-1})$,
see Theorem~\ref{4.5}. In fact, we even show that, for every $\lambda_0>0$, the
operator $A-\lambda_0$ generates a \emph{bounded} holomorphic semigroup of angle $\frac\pi2$. In the proof, we use the bounded $H^\infty$-calculus for
the Laplacian with explicit symbol estimates, see Lemma~\ref{4.3}. The same method
can be applied to the (much easier) boundary value problem \eqref{0-4},
and we obtain unique solvability of \eqref{0-4} and the generation of a holomorphic semigroup for the related operator.

\section{Sobolev spaces of mixed smoothness and traces}
\label{sec2}

Let us fix some notation used throughout the paper. We consider the Euclidean space $\R^n$ with variable $x=(x',x_n)$ and corresponding co-variable $\xi=(\xi',\xi_n)$. We fix $m\in\N$ and define
\begin{align*}
    \langle \xi,\lambda\rangle := (1+|\xi|^2+|\lambda|^{1/m})^{1/2} \textnormal{ and } \langle \xi',\lambda\rangle := (1+|\xi'|^2+|\lambda|^{1/m})^{1/2}
\end{align*}
for $\xi\in\R^n$ and $\lambda\in\C$. Moreover, we write $\langle \xi\rangle:= \spk{\xi,0}$,
$\langle \xi'\rangle:= \spk{\xi',0}$ and $\spk{\lambda}:=\spk{0,\lambda}$.
For (suitable) functions $\phi(\xi)$ defined on $\R^n$ we denote by $\phi(D)$ its associated Fourier multiplier, which is defined by $\phi(D)u := \mathscr F^{-1} (\phi \mathscr F u)$, where $\mathscr F$ denotes the Fourier transform acting in the space $\mathscr S'(\R^n)$ of tempered distributions.
In particular, we have $ D^\alpha = (-i)^{|\alpha|}\partial^\alpha $ for $\alpha\in\N_0^n$. In case $\phi(\xi)=\phi(\xi')$ is independent of $\xi_n$, the associated Fourier multiplier will also be denoted by $\phi(D')$.

For two Banach spaces $X$ and $Y$ let $L(X,Y)$ be the space of bounded linear operators $X\to Y$ and $L(X):=L(X,X)$. We shall write $X=Y$ if both spaces have the same elements and equivalent norms, and we write $X\subset Y$ if $X$ is a subset of $Y$ and the inclusion map $X\to Y$ is bounded.


\subsection{Some function spaces}
\label{subsec1.1}

In the following, let $H_p^s(\R^n)$ and $B_{pp}^s(\R^n)$ denote the standard Bessel potential and Besov spaces for $s\in\R$. Throughout the paper, we assume $p \in (1, \infty)$. For $p=2$, the following definition can also be found in \cite{Grubb96}, Appendix~A.3.

\begin{definition}\label{1.1}
  For $s,\sigma\in\R$ and $p\in (1,\infty)$ we define the Bessel potential space of mixed smoothness
  $H_p^{s,\sigma}(\R^n)$ as
  \begin{align*}
    H_p^{s,\sigma}(\R^n)
    &= \{ u\in\mathscr S'(\R^n)\,:\; \spk{D'}^\sigma u \in H_p^s(\R^n)\}\\
    &= \{ u\in\mathscr S'(\R^n)\,:\; \spk{D}^s\spk{D'}^\sigma u \in L^p(\R^n)\}
  \end{align*}
  with canonical norm $\|u\|_{H_p^{s,\sigma}(\R^n)} := \|\spk{D'}^\sigma u\|_{H_p^s(\R^n)}=\|\spk{D}^s\spk{D'}^\sigma u\|_{L^p(\R^n)}$.
\end{definition}
In the previous definition, $\spk{D'}^\sigma$ acts only on the $x'$-variable. Therefore, the above spaces have different smoothness in $x'$-direction and in $x_n$-direction, which is the reason of the notion of mixed smoothness.

Clearly, $H_p^{s,\sigma}(\R^n)$ is a Banach space. Since $\spk{D'}^\sigma$ and $\spk{D}^s$ leave $\mathscr S(\R^n)$ invariant, the rapidly decreasing functions are a dense subset of  $H_p^{s,\sigma}(\R^n)$. For every $t,\tau\in\R$ the map
  \[ \spk{D}^{t}\spk{D'}^{\tau}\colon H_p^{s,\sigma}(\R^n)\to H_p^{s-t,\sigma-\tau}(\R^n)\]
is an isometric isomorphism with inverse $\spk{D}^{-t}\spk{D'}^{-\tau}$.

We remark that the scale  $H_p^{s,\sigma}(\R^n)$ for $s,\sigma\in\R$ is different from the scale of anisotropic spaces $H_p^{s,\vec{a}}(\R^n)$ in the sense of \cite{Johnsen-Sickel08}, Proposition~2.10 (see also \cite{Triebel06}, Section~5.1.3). In particular, for $s>0$ and $\sigma<-s$, we have positive smoothness with respect to $x_n$ but negative smoothness in $x'$, which is not allowed for the anisotropic spaces. $H_p^{s,\sigma}(\R^n)$ is also different from the space of dominating mixed smoothness (see, e.g.,  \cite{Triebel19}, Section~1.1.2). Spaces of dominating mixed smoothness are defined similarly as above, but with $\spk{\xi}$ being replaced by $\prod_{j=1}^n (1+\xi_j^2)^{1/2}$. We refer to \cite{Schmeisser07}, Section~1, and \cite{Hummel21}, Subsection~2.2, for further information on spaces of dominating mixed smoothness and applications to boundary value problems.

We state some elementary properties of the Bessel potential spaces with mixed smoothness which we shall use frequently later on.





\begin{proposition}\label{1.4}
Let $s,\sigma\in\R$.
\begin{enumerate}[a)]
\item $H_p^{s,0}(\R^n) = H_p^s(\R^n)$ and $H_p^{0,\sigma}(\R^n)=L^p(\rz,H_p^\sigma(\R^{n-1}))$.
\item $H_p^{t,\tau}(\R^n)\subset H_p^{s,\sigma}(\R^n)$ whenever $s\le t$ and $\sigma\le\tau$.
\item For $\sigma\ge0$ we have
 \[H_p^{s+\sigma}(\R^n)\subset H_p^{s,\sigma}(\R^n)\subset H_p^s(\R^n)
 \subset H_p^{s,-\sigma}(\R^n)\subset H_p^{s-\sigma}(\R^n).
 \]
\item If $q$ is the dual coefficient to $p$, i.e. $\frac{1}{p}+\frac{1}{q}=1$, then the standard bilinear pairing $L^p(\rz^n)\times L^q(\rz^n)\to\C$ induces an identification of the dual space of $H^{s,\sigma}_{p}(\R^{n})$ with $H^{-s,-\sigma}_{q}(\R^{n})$.
\item In case of $s\ge 0$ we have
 \[H_p^{s,\sigma}(\R^n)=L^p(\R,H_p^{s+\sigma}(\R^{n-1}))\cap H_p^s (\R, H_p^\sigma(\R^{n-1})).\]
\item For $\alpha\in\N_0^n$, the derivative
  $\partial^\alpha\colon H_p^{s,\sigma}(\R^n)\to H_p^{s-|\alpha_n|, \sigma-|\alpha'|}(\R^n)$ is continuous.
\item If $s\in\N_{0}$ is an integer, $u\in\mathscr S'(\R^n)$ belongs to $H_p^{s,\sigma}(\R^n)$ if and only if $\partial_{n}^j u \in H_p^{0,s+\sigma-j}(\R^{n})$ for all $j=0,\dots,s$. Moreover, $\|u\|:=\sum_{j=0}^s \|\partial_{n}^j u\|_{H_p^{0,s+\sigma-j}(\R^{n})}$ defines an equivalent norm on $H_p^{s,\sigma}(\R^n)$.
\end{enumerate}
\end{proposition}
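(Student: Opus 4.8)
The plan is to reduce essentially all of the statements to the $L^p(\R^n)$-boundedness of explicit Fourier multipliers. The key observation I would use is the following: if $a(\xi)$ is smooth on $\R^n$ and $b(\xi):=\spk{\xi}^{-\mu}\spk{\xi'}^{-\nu}a(\xi)$ satisfies the Mikhlin--H\"ormander estimates $\sup_\xi|\xi^\alpha\partial^\alpha b(\xi)|<\infty$ for all $\alpha\in\N_0^n$, then $b(D)$ is bounded on $L^p(\R^n)$, and conjugating with the isometric isomorphisms $\spk{D}^\bullet\spk{D'}^\bullet$ of Definition~\ref{1.1} turns this into the boundedness of $a(D)\colon H_p^{s,\sigma}(\R^n)\to H_p^{s-\mu,\sigma-\nu}(\R^n)$, with a bound independent of $s,\sigma$. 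In the symbol estimates below one uses repeatedly the elementary bounds $\spk{\xi'}\le\spk{\xi}$, $|\xi_n|\le\spk{\xi}$ and $|\xi_j|\le\spk{\xi'}$ for $j<n$.

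Granting this, I would argue as follows. Part a) is immediate from the definition and Fubini's theorem, since $\spk{D'}^\sigma$ acts only in the variable $x'$ and hence slicewise. For b) the inclusion is induced by the bounded multiplier $\spk{\xi}^{s-t}\spk{\xi'}^{\sigma-\tau}$, using $s-t\le0$ and $\sigma-\tau\le0$. Part c) then follows from b) for its two middle inclusions, and from boundedness of the multiplier $(\spk{\xi'}/\spk{\xi})^{\sigma}\le1$ (valid since $\sigma\ge0$) for the two outer ones. For f), the operator $\partial^\alpha$, read between $H_p^{s,\sigma}$ and $H_p^{s-|\alpha_n|,\sigma-|\alpha'|}$, corresponds after the conjugation to $i^{|\alpha|}\,\xi_n^{\alpha_n}\spk{\xi}^{-\alpha_n}\,(\xi')^{\alpha'}\spk{\xi'}^{-|\alpha'|}$, a product of bounded Mikhlin symbols. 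Part d) I would treat by functional analysis alone: $\Lambda:=\spk{D}^s\spk{D'}^\sigma$ is an isometric isomorphism $H_p^{s,\sigma}(\R^n)\to L^p(\R^n)$ and also $L^q(\R^n)\to H_q^{-s,-\sigma}(\R^n)$, the bilinear pairing identifies the dual of $L^p(\R^n)$ with $L^q(\R^n)$, and because the symbol $\spk{\xi}^s\spk{\xi'}^\sigma$ is even, $\Lambda$ is symmetric for that pairing, i.e.\ $\int(\Lambda u)\,g=\int u\,(\Lambda g)$ on Schwartz functions; hence every continuous functional on $H_p^{s,\sigma}(\R^n)$ is $u\mapsto\int u\,h$ for a unique $h\in H_q^{-s,-\sigma}(\R^n)$, with equality of norms.

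For e) I would first use a) to see that the two spaces on the right-hand side carry the norms $\|\spk{D'}^{s+\sigma}u\|_{L^p(\R^n)}$ and $\|\spk{D_n}^s\spk{D'}^\sigma u\|_{L^p(\R^n)}$ respectively, the latter because $H_p^s(\R,H_p^\sigma(\R^{n-1}))$, by the definition of the vector-valued Bessel potential space together with Fubini, is exactly the space with that norm. The inclusion ``$\supset$'' is then clear since $(\spk{\xi'}/\spk{\xi})^s$ and $(\spk{\xi_n}/\spk{\xi})^s$ are bounded Mikhlin multipliers. For ``$\subset$'' I would use that $\spk{\xi}^s$ and $\spk{\xi'}^s+\spk{\xi_n}^s$ are comparable, uniformly in $\xi$, for $s\ge0$ (this reduces to $1+|\xi'|^2+\xi_n^2\asymp(1+|\xi'|^2)+(1+\xi_n^2)$ and the monotonicity of $t\mapsto t^{s/2}$), and then write $\spk{D}^s\spk{D'}^\sigma u=m(D)\bigl(\spk{D'}^{s+\sigma}u+\spk{D_n}^s\spk{D'}^\sigma u\bigr)$ with $m(\xi):=\spk{\xi}^s/(\spk{\xi'}^s+\spk{\xi_n}^s)$.

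Finally, g) I would deduce from the previous parts. If $u\in H_p^{s,\sigma}(\R^n)$, then $\partial_n^j u\in H_p^{0,s+\sigma-j}(\R^n)$ for $0\le j\le s$ because the associated multiplier $i^j\,\xi_n^j\spk{\xi}^{-s}\spk{\xi'}^{s-j}$ is bounded and Mikhlin (here $s-j\ge0$ enters). Conversely, if $\partial_n^j u\in H_p^{0,s+\sigma-j}(\R^n)$ for all $0\le j\le s$, then $u\in H_p^{0,s+\sigma}(\R^n)$ (the case $j=0$) and, by b), $\partial_n^j u\in H_p^{0,\sigma}(\R^n)$ for every $j\le s$, hence also $u\in H_p^s(\R,H_p^\sigma(\R^{n-1}))$, so e) gives $u\in H_p^{s,\sigma}(\R^n)$; tracking the constants yields the asserted norm equivalence. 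The one step I expect to be genuinely non-routine is verifying that $m$ in part e) is a bounded Mikhlin multiplier --- its boundedness is the comparability above, and the derivative bounds follow once one checks that $\spk{\xi}^s$ and $1/(\spk{\xi'}^s+\spk{\xi_n}^s)$ are symbols of order $s$ and $-s$ respectively --- together with spelling out the vector-valued identifications used in e) and g). Everything else reduces to bookkeeping with the elementary symbol bounds above.
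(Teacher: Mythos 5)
Your proposal is correct and for parts a), b), c), d) and f) it follows essentially the same conjugate-by-$\spk{D}^\bullet\spk{D'}^\bullet$-and-apply-Mikhlin strategy as the paper (for d) you conjugate by the full $\Lambda=\spk{D}^s\spk{D'}^\sigma$ down to $L^p$, while the paper uses $\spk{D'}^\sigma$ down to $H^s_p$ together with the known duality $H^s_p\leftrightarrow H^{-s}_q$; these are equivalent). The genuine divergence is in e) and g): the paper simply cites the $\sigma=0$ cases as known (the Bessel-potential intersection identity and the integer-order characterization of $H_p^s(\R,H_p^0(\R^{n-1}))$) and then applies $\spk{D'}^{-\sigma}$ resp.\ the commutation $[\partial_n,\spk{D'}^\sigma]=0$ to pass to general $\sigma$, whereas you prove e) from scratch via the explicit multiplier $m(\xi)=\spk{\xi}^s/(\spk{\xi'}^s+\spk{\xi_n}^s)$ and then derive g) as a corollary of e) and b). Both routes work: the Mikhlin estimates for $m$ do hold, because the only derivatives of the denominator $\spk{\xi'}^s+\spk{\xi_n}^s$ that survive are purely tangential or purely normal, and after multiplying by $\xi^\alpha$ every resulting term is a product of the bounded ratios $\spk{\xi}^s/D$, $\spk{\xi'}^s/D$, $\spk{\xi_n}^s/D$ — here you additionally use $|\xi_n|\le\spk{\xi_n}$, not just $|\xi_n|\le\spk{\xi}$, which is worth stating explicitly. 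Your version is more self-contained and makes the structural reason for e) visible; the paper's version is shorter but offloads the base case to the literature.
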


\begin{proof}
a) is clear. b) is true, since $\spk{D}^{s-t}\spk{D'}^{\sigma-\tau}$
is a bounded operator in $L^p(\R^n)$ due to Mikhlin's theorem.

c) By Mikhlin's theorem, both $\spk{D}^{-\sigma}\spk{D'}^{\sigma}$ and
$\spk{D'}^{-\sigma}$ are bounded operators in $L^p(\R^n)$ for $\sigma\ge 0$. This yields the first and the second inclusion, respectively. The other two are verified analogously.

d) As $\spk{D^{\prime}}^{\sigma}\colon H^{s,\sigma}_{p}(\R^{n})\to H^{s}_{p}(\R^{n})$ is an isometric isomorphism,
the adjoint operator $(\spk{D^{\prime}}^{\sigma})'\colon (H^{s}_{p}(\R^{n}))' \to (H^{s,\sigma}_{p}(\R^{n}))'$ is an isometric isomorphism in the dual spaces.
In the bilinear pairing,
this adjoint operator is again $\spk{D^{\prime}}^{\sigma}$, and the dual space of  $H^s_p(\R^n) $ is
given by  $H^{-s}_q(\rz^n)$.
Hence, the dual space of $H^{s,\sigma}_{p}(\R^{n})$ is identified with
$\spk{D^{\prime}}^{\sigma}(H^{-s}_q(\rz^n))=H^{-s,-\sigma}_{q}(\R^{n})$.

e) The equality is known to be true in case $\sigma=0$. Applying $\spk{D^{\prime}}^{-\sigma}$ to both sides of this equality yields the claim.

f) holds because $\partial^\alpha\spk{D'}^{-|\alpha'|}\spk{D}^{-\alpha_n}$ is
bounded in $L^p(\R^n)$ due to Mikhlin's theorem.

g) Again this is known in case $\sigma=0$. Then, the general case holds true because $\spk{D^{\prime}}^{\sigma}$ commutes with $\partial_n$.
\end{proof}

\subsection{Interpolation spaces}

Let us briefly recall the complex interpolation method, following \cite{Hytoenen-vanNeerven-Veraar-Weis16}, Section~C.2 (see also
\cite{Triebel78}, Section~1.9). Let $X_0$ and $X_1$ be an interpolation couple of
complex Banach spaces and $S:=\{z\in\C: 0<\Re z<1\}$. Denote by $\mathcal F(X_0,X_1)$ the space of all continuous functions $f\colon \overline S\to X_0+X_1$ such that $f|_S$ is holomorphic as an $(X_0+X_1)$-valued
function on $S$ and, for $j\in\{0,1\}$,
the function $b \mapsto f(j+ib):\R\to X_j$ is bounded and continuous.
$\mathcal F(X_0,X_1)$ becomes a Banach space with the norm
 \[ \|f\|_{\mathcal F(X_0,X_1)} := \max_{j=0,1}\,
    \sup_{b \in\R}   \|f(j+ib)\|_{X_j}.\]
For $\theta\in (0,1)$, the complex interpolation space
$[X_0,X_1]_\theta$ is defined as the space of all $x\in X_0+X_1$ for which $x=f(\theta)$ for some
$f\in\mathcal F(X_0,X_1)$, endowed with the norm
\[ \|x\|_{[X_0,X_1]_\theta} := \inf\{ \|f\|_{\mathcal F(X_0,X_1)} \,:\; f(\theta)=x\}.\]
With this norm, the complex interpolation space becomes a Banach space satisfying $X_0\cap X_1\subset
[X_0,X_1]_\theta\subset X_0+X_1$. In the definition of the interpolation space and the norm,
the space $\mathcal F(X_0,X_1)$ can be replaced by the subspace $\mathcal F_0(X_0,X_1)$ which consists of all
$f\in \mathcal F(X_0,X_1)$ such that $b\mapsto\|f(j+ib)\|_{X_j}$ vanishes for $|b|\to\infty$ and $j=0,1$.
We will also consider the space $\mathcal F_{0}(X_0,X_1; X_0\cap X_1)$ consisting of all $f\in \mathcal F_0(X_0,X_1)$ for which $f(z)\in X_0\cap X_1$ for all $z\in\overline S$ and where $f$ is continuous on $\overline S$ and holomorphic in $S$ as a function with values in $X_0\cap X_1$ $($note that the definition of this space differs from the one in \cite{Hytoenen-vanNeerven-Veraar-Weis16}).
By \cite{Triebel78}, Theorem~1.9.1, $\mathcal F_{0}(X_0,X_1; X_0\cap X_1)$ is dense in $\mathcal F_{0}(X_0,X_1)$.

\begin{lemma}\label{1.5}
Let $s_0,\sigma_0,s_1,\sigma_1\in\R$ and $\theta\in (0,1)$. Then
\begin{equation}\label{1-4}
 [H^{s_0,\sigma_0}_p(\R^n),H^{s_1,\sigma_1}_p(\R^n)]_\theta = H^{s_\theta,\sigma_\theta}_p(\R^n)
\end{equation}
with $s_\theta=(1-\theta)s_0+\theta s_1$ and $\sigma_\theta=(1-\theta)\sigma_0+\theta \sigma_1$.
\end{lemma}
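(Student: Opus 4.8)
The key observation is that the "lifting" operators $\spk{D}^{t}\spk{D'}^{\tau}$ turn the mixed-smoothness scale into the plain Bessel-potential scale, and they behave well under complex interpolation because they are Fourier multipliers with the same symbols on both endpoint spaces. So the plan is to reduce \eqref{1-4} to the classical interpolation identity $[H^{r_0}_p(\R^n),H^{r_1}_p(\R^n)]_\theta = H^{r_\theta}_p(\R^n)$, which may be assumed known (e.g.\ from \cite{Triebel78} or \cite{Hytoenen-vanNeerven-Veraar-Weis16}), together with the elementary mapping properties of $\spk{D}^{s}\spk{D'}^{\sigma}$ recorded after Definition~\ref{1.1}.

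First I would set up a single auxiliary operator that simultaneously trivializes \emph{both} endpoint spaces. The naive idea $\spk{D}^{s_0}\spk{D'}^{\sigma_0}$ maps $H^{s_0,\sigma_0}_p$ isometrically onto $L^p$ but sends $H^{s_1,\sigma_1}_p$ to $H^{s_1-s_0,\sigma_1-\sigma_0}_p$, which is again a mixed-smoothness space, so one iteration is not enough. Instead I would use the \emph{retraction/coretraction} argument: the map
\[
  T := \spk{D}^{s_\theta}\spk{D'}^{\sigma_\theta}
\]
is, by the remark following Definition~\ref{1.1}, an isometric isomorphism $H^{s_0,\sigma_0}_p(\R^n)\to H^{s_0-s_\theta,\sigma_0-\sigma_\theta}_p(\R^n)$ and $H^{s_1,\sigma_1}_p(\R^n)\to H^{s_1-s_\theta,\sigma_1-\sigma_\theta}_p(\R^n)$, with inverse $T^{-1}=\spk{D}^{-s_\theta}\spk{D'}^{-\sigma_\theta}$. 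Since complex interpolation is invariant under isometric isomorphisms of interpolation couples, it suffices to prove
\[
  [H^{a_0,b_0}_p(\R^n),H^{a_1,b_1}_p(\R^n)]_\theta = H^{0,0}_p(\R^n)=L^p(\R^n)
\]
where $a_j = s_j-s_\theta$, $b_j=\sigma_j-\sigma_\theta$; note $(1-\theta)a_0+\theta a_1 = 0$ and likewise for the $b_j$, so the $a_j$ have opposite signs (one nonnegative, one nonpositive) and similarly the $b_j$.

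Next I would handle this reduced statement. The inclusion $L^p \subset [H^{a_0,b_0}_p,H^{a_1,b_1}_p]_\theta$ is the easy direction: I would use the "three-lines" construction, defining for $u\in\mathscr S(\R^n)$ (dense in $L^p$) the analytic family $f(z) := \spk{D}^{(z-\theta)(a_1-a_0)}\spk{D'}^{(z-\theta)(b_1-b_0)}u$; one checks $f(\theta)=u$, $f(j+ib)\in H^{a_j,b_j}_p$ with norm controlled uniformly in $b$ via Mikhlin's theorem applied to the purely imaginary powers (whose symbols $\spk{\xi}^{i\beta}$, $\spk{\xi'}^{i\gamma}$ satisfy Mikhlin bounds with constants growing polynomially in $\beta,\gamma$ — one absorbs this growth by inserting a factor $e^{\delta(z^2-\theta^2)}$ and letting $\delta\to 0$, a standard device). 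For the reverse inclusion $[H^{a_0,b_0}_p,H^{a_1,b_1}_p]_\theta\subset L^p$, I would compose with $\spk{D'}^{b_0}$: this maps the couple $(H^{a_0,b_0}_p,H^{a_1,b_1}_p)$ into $(H^{a_0}_p, H^{a_1,b_1-b_0}_p)$, and then use Proposition~\ref{1.4}(b),(c) — since $b_1-b_0$ and $a_1-a_0$ are controlled in sign relative to the target — to embed into an \emph{isotropic} couple $(H^{a_0}_p,H^{a_1}_p)$ on which the classical interpolation theorem gives $H^{a_\theta}_p=H^0_p=L^p$; applying $\spk{D'}^{-b_0}$ back lands in $L^p$. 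Actually the cleanest route is to do the sign-splitting directly on $(a_j,b_j)$: by symmetry assume $a_0\le 0\le a_1$ and $b_0\le 0 \le b_1$ (the mixed-sign cases are handled by the same idea applied coordinate-wise, using that $H^{a,b}_p$ with $a\ge0,b\le0$ sits between $H^{a+b}_p$ and $H^{a}_p$ by Proposition~\ref{1.4}(c)); then $H^{a_0,b_0}_p \supset H^{a_0+b_0'}_p$-type inclusions sandwich the couple between two isotropic couples with the same interpolation exponent $0$, so the interpolation space is squeezed to $L^p$.

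The main obstacle — and the reason this "does not follow immediately from known results," as the authors note — is precisely the reverse inclusion when the two endpoints have \emph{opposite} anisotropy, e.g.\ $a_0>0, b_0<0$ versus $a_1<0,b_1>0$: here the couple is not contained in any single isotropic space, so one cannot simply dominate by classical spaces. I would overcome this by the retraction argument above combined with the two-step lifting: after reducing to target $L^p$, apply $\spk{D'}^{\max(0,-b_0,-b_1)}$... but more robustly, use the \emph{simultaneous} lifting observation that $[H^{a_0,b_0}_p,H^{a_1,b_1}_p]_\theta$ can be computed by transporting via $\spk{D}^{a(z)}\spk{D'}^{b(z)}$ with $a(z)=(1-z)a_0+za_1$, $b(z)=(1-z)b_0+zb_1$, which is an isomorphism of the couple onto the constant couple $(L^p,L^p)$ as an \emph{operator-valued analytic family} on $\overline S$; invoking the interpolation-with-analytic-family machinery (Stein interpolation / \cite{Triebel78}, Theorem~1.9.1, together with the density of $\mathcal F_0(X_0,X_1;X_0\cap X_1)$ recalled just before the lemma) then yields both inclusions at once. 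This last formulation is the one I expect to write up, as it sidesteps all the sign bookkeeping; the only genuinely technical point left is the Mikhlin estimate for the imaginary powers with its exponential-regularization trick, which is routine.
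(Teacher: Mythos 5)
Your final ``cleanest route'' is in essence the paper's proof: both use a $z$-dependent Fourier-multiplier family to transport an analytic function between the couple of interest and a couple whose interpolation space is known, controlling the imaginary-power multipliers via Mikhlin and absorbing their polynomial growth in $|\Im z|$ by a fixed factor $e^{z^2-\theta^2}$ (no $\delta\to 0$ needed), and both rely on the density of $\mathcal F_0(X_0,X_1;X_0\cap X_1)$ so that the moving multiplier applied to the analytic family is defined and holomorphic on $\overline S$. The normalizations differ: you apply $\spk{D}^{s_\theta}\spk{D'}^{\sigma_\theta}$ to land on the couple $(H^{a_0,b_0}_p,H^{a_1,b_1}_p)$ with interpolation point at $L^p$, and then need the two-variable moving multiplier $\spk{D}^{a(z)}\spk{D'}^{b(z)}$; the paper instead applies $\spk{D}^{s_1}\spk{D'}^{\sigma_0}$ to reduce to $(H_p^{s,0},H_p^{0,\sigma})=(H^s_p,\,L^p(\R,H^\sigma_p(\R^{n-1})))$, after which only the one-variable multiplier $\spk{D'}^{\sigma z}$ moves and the known identity $[H_p^s,L^p]_\theta=H_p^{(1-\theta)s}$ can be quoted. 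The paper's normalization is a bit more economical for exactly this reason, but your version is correct.

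Be aware that your first two sketches for the inclusion $[H^{a_0,b_0}_p,H^{a_1,b_1}_p]_\theta\subset L^p$ do not close: composing with $\spk{D'}^{b_0}$ and embedding into an isotropic couple requires sign conditions on $a_1-a_0$ and $b_1-b_0$ that fail whenever the anisotropies of the two endpoints have opposite signs (e.g.\ $a_0>0>b_0$ and $a_1<0<b_1$), which is precisely the nontrivial case -- you recognize this yourself. Also, ``invoking the interpolation-with-analytic-family machinery'' should not be treated as a black box here: the content of that step is exactly the approximation by $g_k\in\mathcal F_0(X_0,X_1;X_0\cap X_1)$, the verification (via Lemma~5.6.8 and estimate~(5.53) of \cite{Hytoenen-vanNeerven-Veraar-Weis16}) that $z\mapsto\spk{D}^{a(z)}\spk{D'}^{b(z)}g_k(z)$ is bounded, continuous and holomorphic with the right target, and the passage to the limit. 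Writing those details out is the bulk of the proof.
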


\begin{proof}
For simplicity, we write $H_p^{s_1,\sigma_1} := H_p^{s_1,\sigma_1}(\R^n)$ etc. in the proof.
Due to $[X_0,X_1]_\theta = [X_1,X_0]_{1-\theta}$, we may assume $s_0\le s_1$.
Applying the operator $\spk{D}^{s_1}\spk{D'}^{\sigma_0}$ and setting $s:=s_0-s_1\le0 $ and $\sigma:= \sigma_1-\sigma_0\in\R$,
by the retraction argument from \cite{Amann95}, Proposition~2.3.2,
it remains to show that
\begin{equation}\label{1-1}
 [H_p^{s,0} , H_p^{0,\sigma} ]_\theta = H_p^{(1-\theta)s, \theta \sigma} .
\end{equation}
Note that $H_p^{s,0}=H_p^s$ and $H_p^{0,\sigma} = L^p(\R, H_p^\sigma(\R^{n-1}))$
by Proposition~\ref{1.4}~a).

We first show ``$\subset$'' in \eqref{1-1}. For this, let $u\in [H_p^s, H_p^{0,\sigma}]_\theta$,
and choose $g\in \mathcal F_0(H_p^s , H_p^{0,\sigma} )$
with $g(\theta)=u$.
By density,
there exists a sequence
$(g_k)_{k\in\N}\subset  \mathcal F_0(H_p^s, H_p^{0,\sigma}; H_p^s\cap H_p^{0,\sigma})$ such that
$g_k\to g$ in $\mathcal F (H_p^s, H_p^{0,\sigma})$.
It follows that
$g_k(\theta)\to g(\theta)$ in $[H^{s}_{p}, H^{0,\sigma}_{p}]_{\theta}$.
For $k\in\N$ let us define
\[ f_k(z) := e^{z^2-\theta^2} \spk{D'}^{\sigma z} g_k(z)\quad (z\in \overline S).\]
First we show that $f_k(z)\in H^s_p$ with continuous and holomorphic dependence on $z\in\overline S$ and $z\in S$, respectively. This is equivalent to showing that
  $$h_k(z):=\spk{D}^s\spk{D'}^{\sigma z} g_k(z) \quad (z \in \overline S)$$ defines a function $h_k \colon \overline{S}\to L^p$ which
depends on $z$ as requested. In case of $\sigma\le 0$ note that $g_k\colon\overline S\to H_p^s$, hence $\spk{D}^sg_k\colon\overline S\to L^p$, with the requested dependence on~$z$. Then the claim for $h_k$ follows from Lemma~5.6.8 in \cite{Hytoenen-vanNeerven-Veraar-Weis16}.
By (5.53) of \cite{Hytoenen-vanNeerven-Veraar-Weis16} we also find the estimate
 $$\|h_k(z)\|_{L^p}\le C (1+|\sigma\Im z|)\|g_k(z)\|_{H^s_p}\quad (z\in \overline S).$$
If $\sigma>0$, note that $\spk{D}^s\in L(L^p)$ since $s\le 0$. Then write
\begin{align*}
    \spk{D'}^{\sigma z} g_k(z)=\spk{D'}^{\sigma(z-1)} \spk{D'}^{\sigma}g_k(z).
\end{align*}
Since
$g_k\colon \overline S\to H^{0,\sigma}_p$, hence $\spk{D'}^{\sigma}g_k\colon \overline S\to L^p$,
with the requested dependence on $z$, the claim again follows by Lemma~5.6.8 of \cite{Hytoenen-vanNeerven-Veraar-Weis16}. Also
 $$\|h_k(z)\|_{L^p}\le C (1+|\sigma\Im z|)\|g_k(z)\|_{H^{0,\sigma}_p}
   \quad (z\in \overline S).$$
This yields
 $$\|f_k(z)\|_{H_p^{s}} \le \widetilde C \|g_k(z)\|_{H_p^s\cap H_{p}^{0,\sigma} }\quad (z\in \overline S)$$
with $\widetilde C := C\sup_{b\in\R} (1+|\sigma b |) e^{1-\theta^2-b ^2}$.

Arguing similarly, one finds $f_k(ib)\in H^s_p$ and
$f_k(1+ib)\in L^p$ with continuous dependence on $b\in\R$ and
\begin{align*}
  \sup_{b\in\R} \|f_k(ib)\|_{H_p^s }
  & \le C\sup_{b\in\R} \|g_k(ib)\|_{H_p^s },\\
  \sup_{b\in\R} \|f_k(1+ib)\|_{L^p } &
  \le C\sup_{b\in\R} \|g_k(1+ib)\|_{H_p^{0,\sigma} }.
\end{align*}
Summing up, we have shown that $f_k\in \mathcal F(H_p^s , L^p)$ with
\[ \|f_k\|_{\mathcal F(H_p^s , L^p)}\le C \|g_k\|_{\mathcal F (H_p^s, H_p^{0,\sigma})}.\]



As $(g_k)_{k\in\N}$ is convergent and therefore a Cauchy sequence, the above estimate,
applied to $f_k-f_\ell$, yields that also $(f_k)_{k\in\N}\subset \mathcal F(H_p^s , L^p)$
is a Cauchy sequence. Again by the definition of the interpolation space $[H_p^s,L^p]_\theta $, we obtain that
$(f_k(\theta))_{k\in\N}$ is a Cauchy sequence in $[H_p^s,L^p]_\theta = H_p^{(1-\theta)s}$
(for the last equality, see  \cite{Hytoenen-vanNeerven-Veraar-Weis16}, Theorem~5.6.9).
By completeness, there exists $v\in H_p^{(1-\theta)s}$ with $f_k(\theta)\to v$.
On the other hand, $f_k(\theta) = \spk{D'}^{ \theta\sigma} g_k(\theta)$ and
$g_k(\theta)\to g(\theta)=u$ in $[H_p^s,H_p^{0,\sigma}]_\theta$, hence
$f_k(\theta)\to \spk{D'}^{\theta\sigma}u$ in
$\mathscr S'(\R^n)$.
Therefore, $u = \spk{D'}^{-\theta\sigma} v \in H_p^{(1-\theta)s,\theta\sigma}$ with norm
\begin{align*}
  \|u\|_{H_p^{(1-\theta)s,\theta\sigma}} & \le C \|v\|_{[H_p^s,L^p]_\theta}
  \le C\lim_{k\to\infty} \|f_k\|_{\mathcal F(H_p^s , L^p)} \le C \lim_{k\to \infty}
  \|g_k\|_{\mathcal F(H_p^s , H_p^{0,\sigma})} \\
  & = C \|g\|_{\mathcal F(H_p^s , H_p^{0,\sigma})} .
\end{align*}
As $g\in \mathcal F_0(H_p^s , H_p^{0,\sigma} )$ was arbitrary with $g(\theta)=u$, we obtain
\begin{align*}
    \|u\|_{H_p^{(1-\theta)s,\theta\sigma}} \le C \|u\|_{[H_p^{s,0} , H_p^{0,\sigma} ]_\theta}
\end{align*}
which finishes the proof of ``$\subset$''.

The proof of the embedding ``$\supset$'' follows in exactly the same way. Let $u
\in H_p^{(1-\theta)s,\theta\sigma}$ and $v:= \spk{D'}^{\theta \sigma} u\in H_p^{(1-\theta)s}$,
and let  $f\in \mathcal F_0(H^s,L^p)$ with $f(\theta)=v$. We approximate $f$ by
a sequence $(f_k)_{k\in\N}\subset \mathcal F_0(H_p^s,L^p;L^p)$ and set
$g_k(z) := e^{z^2-\theta^2} \spk{D'}^{-\sigma z} f_k(z)$ for $z\in \overline S$ and
$k\in\N$. 
If $\sigma\ge 0$, we see that $g_k(z)\in L^p \subset H_p^s$, and for
$\sigma<0$ we have $g_k(z)\in H_p^{0,\sigma}$, so we have $g_k(z)\in H_p^s+H_p^{0,\sigma}$
 for all $z\in\overline S$. Therefore, we can argue as above
to show the embedding ``$\supset$'' in \eqref{1-1}.
\end{proof}
In addition to the spaces above, we will also consider the standard Besov spaces
$B_{pp}^s(\R^n)$ for $p\in (1,\infty)$ and $s\in\R$.
For $X\in\{ \mathscr S, \mathscr{S}', H_p^s, B_{pp}^s, H_p^{s,\sigma}\}$
and a domain $\Omega\subset\R^n$, we define
\begin{align*}
    X(\Omega) := \{ u|_\Omega: u\in X(\R^n)\}
\end{align*}
(where restriction is understood in the distributional sense) with the canonical norm
\[ \|v\|_{X(\Omega)} := \inf\{ \|u\|_{X(\R^n)} : u\in X(\R^n),\, u|_\Omega = v\} \]
(see, e.g., \cite{Triebel06}, Definition~4.1).
{Moreover, for a closed subset $A \subset \R^n$ we define
\begin{align*}
    \dot X(A) :=
\{ u\in X(\R^n): \supp u\subset A\}.
\end{align*}



\noindent Then by definition we see that $X(\Omega)$ can be identified with the quotient space
$X(\R^n)/\dot X(\R^n\setminus \Omega).$} Note that we consider $H^{s,\sigma}_p(\Omega)$  only for $\Omega=\R^n_+$.


\begin{remark}
  \label{1.6}
a) Let $\Omega=\R^n_+$. Then the restriction $r_{\R^n_+}\colon X(\R^n)\to X(\R^n_+),\,
u\mapsto u|_{\R^n_+}$ is a retraction. This follows from the fact that there exists
a restriction-extension pair $(R,E)$ for $(\mathscr S'(\R^n), \mathscr S'(\R^n_+))$
in the sense of \cite{Amann19}, Theorem~VI.1.2.3, which yields the restriction-extension pair $(r_{\R^n_+}, e_{\R^n_+})$ on $(X(\R^n), X(\R^n_+))$ by \cite{Amann19}, Lemma~VII.2.8.1. In particular, the extension operator $e_{\R^n_+}$ is universal for all considered spaces. Later, we will also consider $e_\Omega^0$, the canonical extension from $\Omega$ to $\R^n$ by zero.

b) Similarly, if $\Omega\subset\R^n$ has smooth boundary, the
map $u\mapsto u|_\Omega$ is a retraction from $H_p^s(\R^n)$ to $H_p^s(\Omega)$ and from
$B_{pp}^s(\R^n)$ to $B_{pp}^s(\Omega)$, and for all $N\in\N$ there exists a common co-retraction (i.e., a continuous right-inverse) for all $|s|<N$ (see \cite{Triebel10}, Theorem~3.3.4).

c) Due to a) and standard retraction-coretraction arguments (see \cite{Amann95}, Section I.2.3),
all statements of Proposition~\ref{1.4} and Lemma~\ref{1.5} remain valid
if we replace $\R^n$ by $\R^n_+$, with the exception of Proposition~\ref{1.4}~d) which has to be modified
in the following form: For all $s,\sigma\in\R$ the dual space of $H_p^{s,\sigma}(\R^n_+)$ with respect
to the standard pairing is given by $(H_p^{s,\sigma}(\R^n_+))' = \dot H_q^{-s,-\sigma}(\overline{\R^n_+})$.
This follows from $(H_p^s(\R^n_+))' = \dot H_q^{-s}(\overline{\R^n_+})$ (see \cite{Amann19},
Theorem~VII.4.4.2) in the same way as in the proof of Proposition~\ref{1.4}~d), noting that
$\spk{D'}^\sigma u$ has support in $\overline{\R^n_+}$ if $u$ does.
\end{remark}

\subsection{Boundary traces}

To define the trace space of $H_p^{s,\sigma}(\R^n_+)$ on the boundary $\partial\R^n_+=\R^{n-1}$, we
first note that for the standard space $H^s_p(\R^n_+)$  the trace
\begin{equation}\label{1-5}
 \gamma_0\colon H^s_p(\R^n_+)\to B_{pp}^{s- 1/p}(\R^{n-1}), \; u\mapsto \gamma_0 u :=
u|_{\R^{n-1}}
\end{equation}
 exists and is continuous if and only if $s> \frac 1p$. In fact, it was
shown in \cite{Johnsen-Sickel08}, Theorem~2.4, that for $s\le \frac1p$ the map $u\mapsto \gamma_0 u$
is not even continuous from $H^s_p(\R^n_+)$ to $\mathscr D'(\R^{n-1})$. If $s>\frac 1p$, then
\eqref{1-5} is a retraction, and $\gamma_0$ is the unique extension of the classical boundary trace
$u\mapsto u|_{x_n=0}$  for smooth functions  $u\in\mathscr S(\R^n_+)$. We will also
consider the higher-order traces $\gamma_j\colon H^s_p(\R^n_+)\to B_{pp}^{s-j-1/p}(\R^n),\;
u\mapsto \gamma_0\partial_n^{j}u$ for $j\in\N_0$ and $s>j+\frac 1p$.

\begin{definition}
  \label{1.7}
  Let $j\in\N_0$, $s\in (j+\frac1p,\infty)$, and $\sigma\in\R$. Then we define the $j$-th order trace
  $\wt\gamma_j$ on $H^{s,\sigma}_p(\R^n_+)$ as
  \begin{equation}\label{1-6}
   \wt\gamma_j\colon H^{s,\sigma}_p(\R^n_+)\to B_{pp}^{s+\sigma-j-1/p}(\R^{n-1}),\quad
   u\mapsto \langle D'\rangle^{-\sigma} \gamma_j \langle D'\rangle^{\sigma} u.
   \end{equation}
\end{definition}

\begin{remark}
  \label{1.8}
  Note that $\widetilde\gamma_j$ is well-defined as $\langle D'\rangle^{\sigma} u\in H_p^{s}(\R^n_+)$
  and  the unique continuous extension of the classical trace which is defined on the dense subspace
$\mathscr S(\R^n_+)$. The fact that $\gamma_j$ is a retraction on the classical
  space $H_p^s(\R^n_+)$ immediately implies that \eqref{1-6} is a retraction, too. In fact,
  if $e_j$ is a co-retraction to $\gamma_j$, then $\widetilde e_j := \langle D'\rangle^{-\sigma}
  e_j\langle D'\rangle^\sigma$ is a co-retraction to $\widetilde \gamma_j$. As $\widetilde \gamma_j$
  is (for $\sigma\le 0$) the unique continuous extension of $\gamma_j$ to $H_p^{s,\sigma}(\R^n_+)$,
  we will write $\gamma_j$ instead of $\widetilde \gamma_j$ again.
\end{remark}

\begin{remark}[Roitberg spaces]\label{1.9}
There is a theory of generalized boundary value problems in spaces of negative regularity due to
Roitberg \cite{Roitberg96}. In this theory, for $s\in\R$ and $\ell\in\N_0$, the space
$\widetilde H_p^{s,(\ell)}(\R^n_+)$ is defined as the set of all tuples $(u,g_0,\dots,g_{\ell-1})$ such that
there exists a sequence $(u_k)_{k\in\N}\subset \mathscr S(\R^n_+)$ satisfying
$(u_k,\gamma_0 u_k,\dots, \gamma_{\ell-1}u_k)\to (u,g_0,\dots,g_{\ell-1})$, where the convergence takes place
in the space
\begin{align*}
  H_p^s(\R^n_+)\times \prod_{j=0}^{\ell-1} B_{pp}^{s-j-1/p}(\R^{n-1}) & \text{ if } s\ge 0,\\
  \dot H_p^s(\overline{\R^n_+})\times \prod_{j=0}^{\ell-1} B_{pp}^{s-j-1/p}(\R^{n-1}) & \text{ if } s < 0.
\end{align*}
For $s>\ell-1+1/p$, the space $\widetilde H_p^{s,(\ell)}(\R^n_+)$ can be identified with the standard
Sobolev space $H_p^s(\R^n_+)$, and we have $g_j=\gamma_j u$ for $j=0,\ldots, \ell-1$ in this case, see~\cite{Roitberg96},
Section~2.1.

Let $\ell\in\N_0$, $s>\ell-1+1/p$, and $\sigma\le 0$, and let $u\in H_p^{s,\sigma}(\R^n_+)$. By
density, there exists a sequence $(u_k)_{k\in\N}\subset\mathscr S(
\R^n_+)$ with $\|u_k-u\|_{H_p^{s,\sigma}(\R^n_+)} \to 0\;(k\to\infty)$. The continuity
of \eqref{1-6} yields  $\gamma_j u_k\to \gamma_j u\in B_{pp}^{s+\sigma-j-1/p}(\R^{n-1})$ for $j=0,\ldots, \ell-1$. \pagebreak[3]

From this and Lemma~\ref{1.4}~a), we obtain the continuous embeddings
\begin{alignat*}{5}
  & H_p^{s,\sigma}(\R^n_+) & \subset \widetilde H_p^{s+\sigma,(\ell)}(\R^n_+)&\quad & \text{ if } s+\sigma\ge 0,\\
  &  H_p^{s,\sigma}(\R^n_+)\cap \dot H_p^{s+\sigma}({\overline{\R^n_+}})\, &
  \subset \widetilde H_p^{s+\sigma,(\ell)}(\R^n_+)&& \text{ if } s+\sigma < 0,
\end{alignat*}
where we identify $u\in H_p^{s,\sigma}(\R^n_+)$ with the tuple $(u,\gamma_0 u,\dots,\gamma_{\ell-1}u)$.
\end{remark}

\subsection{Parameter-dependent spaces}

We will also need parameter-dependent versions of the above spaces. For this, we follow the approach of Grubb--Kokholm~\cite{Grubb-Kokholm93}.

\begin{definition}
\label{1.10.0}
If $X_\lambda$ and $Y_\lambda$ are families of Banach spaces (parametrized by $\lambda$ from some index set), a family of linear operators $T(\lambda)\colon X_\lambda\to Y_\lambda$ is said to be continuous if  $T(\lambda)\in L(X_\lambda,Y_\lambda)$  for every fixed $\lambda$
and the operator norm  $\|T(\lambda)\|_{L(X_\lambda,Y_\lambda)}$ is uniformly bounded in $\lambda$. A continuous family
is called an isomorphism if each $T(\lambda)$ is invertible and $T(\lambda)^{-1}\colon
Y_\lambda\to X_\lambda$ is a continuous family, too.
\end{definition}
In our context, the occurring families of spaces $X_\lambda$ will consist of a fixed vector space $X$ equipped with a norm depending on the parameter $\lambda$.
\begin{definition}
  \label{1.10}
  For $\lambda\in\C$ 
  let $\kappa_\lambda$ denote the homeomorphism of
  $\mathscr{S}'(\R^n)$ given on $\mathscr{S}(\R^n)$ by $(\kappa_\lambda
  u)(x)=u(\langle\lambda\rangle x)$. Then,
  we define the parameter-dependent norms by
    \begin{alignat*}{4}
      \|u\|_{H_{p,\lambda}^{s,\sigma}(\R^n)} & := \langle\lambda\rangle^{s+\sigma-n/p} \|
      \kappa_\lambda^{-1}  u\|_{H_p^{s,\sigma}(\R^n)}&& (s,\sigma\in\R),\\
      \|u\|_{B_{pp,\lambda}^{s}(\R^{n-1})} & := \langle\lambda\rangle^{s-(n-1)/p} \|
      \kappa_\lambda^{-1}  u\|_{B_{pp}^s(\R^{n-1})}&\quad& (s \in\R).
    \end{alignat*}
    Additionally, we set $H_{p,\lambda}^s(\R^n):=H_{p,\lambda}^{s,0}(\R^n)$ for $s\in\R$. Analogously, we define $H_{p,\lambda}^{s,\sigma}(\R^n_+)$ and $H_{p,\lambda}^{s}(\R^n_+)$.
\end{definition}

\begin{lemma} \label{1.11}\quad
\begin{enumerate}[a)]
\item
The statements from Proposition~\ref{1.4} a)--f), Lemma~\ref{1.5}, Remark~\ref{1.6} and Remark~\ref{1.8} remain valid in
the spaces $H_{p,\lambda}^{s,\sigma}$ for $\lambda\in\C$ with respect to the parameter-dependent norms.

\item For all $s,\sigma\in\R$ and $\lambda\in\C$, we have
  \[ \|u\|_{H_{p,\lambda}^{s,\sigma}(\R^n )} =
   \| \langle D',\lambda\rangle^{\sigma} u\|_{H_{p,\lambda}^s(\R^n )} = \| \langle D,\lambda
  \rangle^{s}\langle D',\lambda\rangle^{\sigma} u\|_{L^p(\R^n )}.\]

\item (Interpolation inequality)
Let $s_0<s<s_1$ and $\sigma\in\R$. For every $\eps>0$ there exists a constant $C(\eps)>0$
such that, for every $\lambda\in\C$ and $u\in H^{ s_1,\sigma}_{p,\lambda}(\R^n)$,
\[\|u\|_{H^{s,\sigma}_{p,\lambda}(\R^n)}\le \eps \|u\|_{H^{s_1,\sigma}_{p,\lambda}(\R^n)}+
   C(\eps)\spk{\lambda}^{s-s_0}\|u\|_{H^{s_0,\sigma}_{p,\lambda}(\R^n)}.\]
   The analog statement holds for $\R^n_+$ instead of $\R^n$.
\end{enumerate}
\end{lemma}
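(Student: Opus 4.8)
The plan is to reduce everything to the parameter-independent statements already established, by exploiting the scaling map $\kappa_\lambda$. The basic observation is that, by Definition~\ref{1.10}, the operator
\[
 u\mapsto \langle\lambda\rangle^{s+\sigma-n/p}\kappa_\lambda^{-1}u
\]
is, for each fixed $\lambda$, an isometric isomorphism from $H_{p,\lambda}^{s,\sigma}(\R^n)$ onto $H_p^{s,\sigma}(\R^n)$; the extra power of $\langle\lambda\rangle$ is exactly the Jacobian factor making this an $L^p$-isometry when $s=\sigma=0$, and the definition is rigged so that it is an isometry for all $s,\sigma$. For part a) I would go through the items of Proposition~\ref{1.4}~a)--f), Lemma~\ref{1.5}, Remark~\ref{1.6} and Remark~\ref{1.8} one at a time and note that each is a statement about boundedness (or isomorphism, or density, or an identification of a space) that is transported verbatim under conjugation by this isometry. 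The only point requiring a word is that $\kappa_\lambda$ intertwines the relevant Fourier multipliers up to a harmless rescaling of the covariable — e.g. $\kappa_\lambda^{-1}\langle D'\rangle^\sigma\kappa_\lambda = \langle\langle\lambda\rangle^{-1}D'\rangle^\sigma$ acting on the $x'$-variable — so that, for instance, $\langle D\rangle^t\langle D'\rangle^\tau\colon H_{p,\lambda}^{s,\sigma}\to H_{p,\lambda}^{s-t,\sigma-\tau}$ is bounded with norm controlled by a $\lambda$-independent constant. Since all the multipliers appearing in the proof of Proposition~\ref{1.4} satisfy Mikhlin-type estimates that are uniform under such dilations, the constants are uniform in $\lambda$, which is precisely the content of Definition~\ref{1.10.0}. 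For the duality and trace/retraction statements, the co-retractions (resp. adjoint operators) are conjugated in the same way and inherit uniform bounds. The interpolation identity of Lemma~\ref{1.5} is preserved because complex interpolation commutes with isometric isomorphisms of the couple.

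For part b) I would simply compute. Writing $v:=\kappa_\lambda^{-1}u$, one has $\|u\|_{H_{p,\lambda}^{s,\sigma}}=\langle\lambda\rangle^{s+\sigma-n/p}\|v\|_{H_p^{s,\sigma}}=\langle\lambda\rangle^{s+\sigma-n/p}\|\langle D\rangle^s\langle D'\rangle^\sigma v\|_{L^p}$. Now use the conjugation identity $\kappa_\lambda^{-1}\langle D,\lambda\rangle^s\langle D',\lambda\rangle^\sigma\kappa_\lambda = \langle\lambda\rangle^{s+\sigma}\langle D\rangle^s\langle D'\rangle^\sigma$, which follows from $\kappa_\lambda^{-1}\,\phi(D)\,\kappa_\lambda = \phi(\langle\lambda\rangle^{-1}D)$ together with the homogeneity $\langle\langle\lambda\rangle^{-1}\xi,\lambda\rangle = \langle\lambda\rangle^{-1}\langle\xi,\langle\lambda\rangle\rangle$... more directly: $\langle\langle\lambda\rangle^{-1}\xi',\lambda\rangle = \langle\lambda\rangle^{-1}(\,|\xi'|^2+\langle\lambda\rangle^2(1+|\lambda|^{1/m}\langle\lambda\rangle^{-2}))^{1/2}$ — I would just verify $\langle\langle\lambda\rangle^{-1}\xi,\lambda\rangle=\langle\lambda\rangle^{-1}\langle\xi\rangle$, which holds because $|\lambda|^{1/m}=\langle\lambda\rangle^2-1$ so $\langle\lambda\rangle^{-2}(1+|\xi|^2+|\lambda|^{1/m})=1+\langle\lambda\rangle^{-2}|\xi|^2=\langle\langle\lambda\rangle^{-1}\xi\rangle^2$. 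Hence $\langle D,\lambda\rangle^s = \langle\lambda\rangle^s\kappa_\lambda\langle D\rangle^s\kappa_\lambda^{-1}$ and similarly for $\langle D',\lambda\rangle^\sigma$, and the $L^p$-norm on the right, together with the Jacobian factor $\langle\lambda\rangle^{-n/p}$ produced by the outer $\kappa_\lambda$, reassembles exactly to $\|u\|_{H_{p,\lambda}^{s,\sigma}}$. The middle expression is obtained the same way, stopping one multiplier earlier and invoking the already-proven parameter-dependent version of $H_p^{0,\sigma}=H_p^{0}$ after $\langle D'\rangle^\sigma$, i.e. part a) applied to Proposition~\ref{1.4}~a).

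For part c), the interpolation inequality, I would first prove the parameter-free version ($\lambda$ fixed, $\langle\lambda\rangle=1$ say, so $H_{p}^{s,\sigma}$) and then scale. The parameter-free inequality $\|u\|_{H_p^{s,\sigma}}\le\eps\|u\|_{H_p^{s_1,\sigma}}+C(\eps)\|u\|_{H_p^{s_0,\sigma}}$ follows by applying $\langle D'\rangle^\sigma$ to reduce to $\sigma=0$, and then from the standard fact that $\langle\xi\rangle^{s}\le\eps\langle\xi\rangle^{s_1}+C(\eps)\langle\xi\rangle^{s_0}$ pointwise for $s_0<s<s_1$ (an elementary convexity/Young estimate), combined with the triangle inequality in $L^p$ after writing $\langle D\rangle^s u = \langle D\rangle^{s-s_1}(\langle D\rangle^{s_1}u)$ etc. — more carefully, one splits in frequency: on $\{\langle\xi\rangle\le R\}$ use the $s_0$-norm, on $\{\langle\xi\rangle>R\}$ use the $s_1$-norm, with $R$ chosen so that $R^{s-s_1}\le\eps$, and the low-frequency cutoff is a uniform $L^p$-multiplier. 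To pass to general $\lambda$, conjugate by $\kappa_\lambda$: the inequality for $v=\kappa_\lambda^{-1}u$ reads $\|v\|_{H_p^{s,\sigma}}\le\eps'\|v\|_{H_p^{s_1,\sigma}}+C(\eps')\|v\|_{H_p^{s_0,\sigma}}$; multiplying through by $\langle\lambda\rangle^{s+\sigma-n/p}$ and bookkeeping the differing powers $\langle\lambda\rangle^{(s_1-s)}$ and $\langle\lambda\rangle^{(s_0-s)}$ converts this to $\|u\|_{H^{s,\sigma}_{p,\lambda}}\le\eps'\langle\lambda\rangle^{s-s_1}\|u\|_{H^{s_1,\sigma}_{p,\lambda}}+C(\eps')\langle\lambda\rangle^{s-s_0}\|u\|_{H^{s_0,\sigma}_{p,\lambda}}$; since $\langle\lambda\rangle\ge1$ we have $\langle\lambda\rangle^{s-s_1}\le1$, so choosing $\eps'=\eps$ gives the claimed bound. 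The half-space case follows in every part by applying the universal restriction operator $r_{\R^n_+}$ and the universal extension $e_{\R^n_+}$ from Remark~\ref{1.6}~a), whose norms are $\lambda$-uniform by part a). The main (very mild) obstacle is purely bookkeeping: keeping track of the powers of $\langle\lambda\rangle$ produced by $\kappa_\lambda$ and checking that the dilation of the covariable leaves all Mikhlin constants untouched; there is no genuine analytic difficulty, since everything is a transported version of material already proven.
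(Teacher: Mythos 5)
Your approach is the same as the paper's: reduce everything to the parameter-independent statements by conjugating with $\kappa_\lambda$, observing that the conjugated multipliers satisfy Mikhlin bounds uniform in $\lambda$, and for c) apply the standard Sobolev interpolation inequality to $\spk{D'}^\sigma\kappa_\lambda^{-1}u$ and then bookkeep powers of $\spk{\lambda}$. Two of your intermediate formulas are misstated (they have the scaling inverted): $\kappa_\lambda^{-1}\phi(D)\kappa_\lambda = \phi(\spk{\lambda}D)$, not $\phi(\spk{\lambda}^{-1}D)$, and the homogeneity identity you want is $\spk{\spk{\lambda}^{-1}\xi} = \spk{\lambda}^{-1}\spk{\xi,\lambda}$ rather than $\spk{\spk{\lambda}^{-1}\xi,\lambda} = \spk{\lambda}^{-1}\spk{\xi}$; however the displayed verification chain and the final identity $\kappa_\lambda^{-1}\spk{D,\lambda}^s\spk{D',\lambda}^\sigma\kappa_\lambda = \spk{\lambda}^{s+\sigma}\spk{D}^s\spk{D'}^\sigma$ are correct, so the argument goes through (the paper in b) simply quotes the $\sigma=0$ case from Grubb--Kokholm rather than rederiving it as you do).
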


\begin{proof}
a) We can apply the above results in the parameter-independent norms to the function $\kappa_\lambda^{-1}u$
and obtain constants independent of $\lambda$, noting also that $\kappa_\lambda$ commutes with taking
the trace on the boundary $\R^{n-1}$.

b) For $\sigma=0$, the statement follows from \cite{Grubb-Kokholm93}, Formula (1.9).
For general $\sigma$, we use the identity
\[ \spk{\lambda}^\sigma\kappa_\lambda\spk{D'}^\sigma \kappa_\lambda^{-1}
= \kappa_\lambda \big\langle \spk{\lambda} D',\lambda\big\rangle^\sigma\kappa_\lambda^{-1}
= \spk{D',\lambda}^\sigma, \]
which is obtained by straightforward calculation. This yields
\begin{align*}
 \|u\|_{H^{s,\sigma}_{p,\lambda} (\R^n)} & =
 \spk{\lambda}^{s+\sigma-n/p} \big\| \spk{D'}^{\sigma} \kappa_\lambda^{-1} u\big\|_{H_p^s(\R^n)} = \spk{\lambda}^{\sigma} \big\| \kappa_\lambda \spk{D'}^{\sigma} \kappa_\lambda^{-1} u\big\|_{H_{p,\lambda}^s(\R^n)}\\
& = \big\| \langle D',\lambda\rangle^{\sigma} u\big\|_{H_{p,\lambda}^s(\R^n)}
= \big\| \langle D,\lambda
  \rangle^{s} \langle D',\lambda\rangle^{\sigma} u\big\|_{L^p(\R^n)}.
\end{align*}

c) An application of  the standard interpolation inequality gives
\begin{align*}
\|u\|_{H^{ s,\sigma }_{p,\lambda} (\R^n)} &=
 \spk{\lambda}^{s+\sigma-n/p} \big\| \spk{D'}^{\sigma} \kappa_\lambda^{-1} u\big\|_{H_p^s(\R^n)}\\
&    \le \spk{\lambda}^{s+\sigma-n/p}\Big(
\eps \|\spk{D'}^\sigma \kappa_\lambda^{-1} u \|_{H^{s_1}_p(\R^n)}+
 C(\eps)\|\spk{D'}^\sigma \kappa_\lambda^{-1} u \|_{H^{s_0}_p(\R^n)}\Big) \\
& \le \eps \|u\|_{H^{s_1,\sigma}_{p,\lambda}(\R^n)}+
   C(\eps)\spk{\lambda}^{s-s_0}\|u\|_{H^{s_0,\sigma}_{p,\lambda}(\R^n)}.
\end{align*}

\vspace*{-1em}
\end{proof}

Let us remark that a statement analogous to Lemma~\ref{1.11}~b)  does not hold for the parameter-dependent Besov spaces (with $L^p(\R^n)$ being replaced by $B_{pp}^0(\R^{n-1})$).
Although $\langle D',\lambda\rangle^s\colon B_{pp,\lambda}^s(\R^{n-1})\to B_{pp,\lambda}^0(\R^{n-1})$
is an isomorphism, the norm in $B_{pp,\lambda}^0(\R^{n-1})$ still depends on $\lambda$, in
contrast to $\|\cdot\|_{H_{p,\lambda}^0(\R^n)} = \|\cdot\|_{L^p(\R^n)}$.
This was observed in \cite{Grubb-Kokholm93}, Section~1.1.

\subsection{Multiplication operators}

We finish this section with some considerations concerning multiplication operators. For a sufficiently smooth function $a\colon \R^n\to\C$, we define the multiplication operator $M_a$ by $M_a u \coloneqq au$ whenever the function $u$ belongs to some Sobolev space of positive order. For negative order spaces, we define the multiplication operator by duality with respect to the canonical pairing $L^p(\R^n)\times L^{q}(\R^n)$, where $\frac1p+\frac1q=1$.
In the following, $\BUC^r(\Omega)$ denotes the space of all functions which are $r$-times continuously differentiable
in $\Omega$ and for which all derivatives up to order $r$ are bounded and uniformly continuous.

\begin{lemma}\label{1.12}
Let $s, \sigma \in \R$, and define $r'=r'(s,\sigma):= \max\{|s|,|\sigma|,|s+\sigma|\}$ and
\begin{align}\label{def:r}
  r=r(s,\sigma)
  :=\lfloor r' 
  \rfloor+1.
\end{align}
Let $a \in \BUC^r(\R^n)$ and let $M_a$ denote the operator of multiplication by $a$.
\begin{enumerate}[a)]
    \item There are constants $C=C(r)=C(s,\sigma)>0$ and $\gamma=\gamma(s,\sigma)> 0$ such that for all $\lambda \in \C$
  \begin{align}  \label{eq:int}
    \|M_a\|_{L(H_{p,\lambda}^{s,\sigma}(\R^n))}\le
    C(r)\|a\|^{1-\gamma}_{\BUC^{r}(\R^{n})}\|a\|_\infty^\gamma.
  \end{align}
  \item If we only have $a \in\BUC^{\lceil r' \rceil }(\R^n)$, $M_a$ is still a multiplier in $H^{s,\sigma}_{p,\lambda}(\R^n)$ and \eqref{eq:int} holds with $\gamma=0.$
      \item For every $\eps>0$ there exists a $\delta=\delta(\eps,s,\sigma)>0$ and a $\lambda_0=\lambda_0\big(\|a\|_{\BUC^{r}(\R^{n})}\big) > 0$ such that
    \[ \|M_a\|_{L(H_{p,\lambda}^{s,\sigma}(\R^n))} < \epsilon \]
  whenever $\|a\|_\infty<\delta$ and $\lambda\in\C$ with $|\lambda|\geq \lambda_0$. \item  The results in a$)$, b$)$ and c$)$ hold analogously
  for $\R^n_+$ instead of $\R^n$ with
  $a\in \BUC^{r}({\R^n_+})$.
  \item The results in a), b) and c) also hold if we replace $H_{p,\lambda}^{s,\sigma}(\R^n)$ by  $B_{pp,\lambda}^{s}(\R^{n-1})$, taking $\sigma=0$, i.e. $r'=|s|$, and $a \in \BUC^r(\R^{n-1})$ or $a \in \BUC^{\lceil r' \rceil}(\R^{n-1})$, respectively.
\end{enumerate}
\end{lemma}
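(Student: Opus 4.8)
The plan is to reduce everything to the case $\sigma=0$ — i.e.\ to the classical statement that multiplication by a $\BUC^r$ function is bounded on $H^s_{p,\lambda}(\R^n)$ with the quantitative estimate \eqref{eq:int} — and then to transport this along the isometric isomorphism $\langle D',\lambda\rangle^\sigma\colon H^{s,\sigma}_{p,\lambda}(\R^n)\to H^s_{p,\lambda}(\R^n)$ of Lemma~\ref{1.11}~b). The key algebraic observation is that $\langle D',\lambda\rangle^\sigma M_a\langle D',\lambda\rangle^{-\sigma}$ differs from $M_a$ by a commutator, so the whole lemma hinges on a commutator estimate: writing $R_\lambda^\sigma := \langle D',\lambda\rangle^\sigma$, we have
\begin{equation*}
  \|M_a\|_{L(H^{s,\sigma}_{p,\lambda})} = \|R_\lambda^\sigma M_a R_\lambda^{-\sigma}\|_{L(H^s_{p,\lambda})},
\end{equation*}
and $R_\lambda^\sigma M_a R_\lambda^{-\sigma} = M_a + [R_\lambda^\sigma, M_a]R_\lambda^{-\sigma}$. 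So I first establish the $\sigma=0$ case directly, then control the commutator term.

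For the $\sigma=0$ case I would proceed in the standard way: by the scaling definition of the parameter-dependent norm it suffices to treat $\lambda$ with, say, $|\lambda|\in\{0\}\cup[1,\infty)$ after applying $\kappa_\lambda$, or more simply to work with the Fourier-multiplier description $\|u\|_{H^s_{p,\lambda}} = \|\langle D,\lambda\rangle^s u\|_{L^p}$. For $s$ a nonnegative integer, boundedness of $M_a$ on $H^s_{p,\lambda}$ with the estimate $\|M_a\| \le C\|a\|_{\BUC^{\lceil|s|\rceil}}$ follows from the Leibniz rule together with $\|\langle D,\lambda\rangle^{-|\alpha|}D^\alpha\|_{L(L^p)}\le C$ (Mikhlin); for $s\le 0$ one dualizes, using Proposition~\ref{1.4}~d); the general real $s$ follows by complex interpolation (Lemma~\ref{1.5} for $\sigma=0$, i.e.\ the classical result) between two consecutive integers — this is exactly why $r=\lfloor r'\rfloor+1$ appears, since one needs smoothness up to the next integer, and the interpolation produces the convex-combination exponent $\|a\|_{\BUC^r}^{1-\gamma}\|a\|_\infty^\gamma$ with $\gamma$ depending only on where $s$ sits between the integers. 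If one only assumes $a\in\BUC^{\lceil r'\rceil}$, one lands on an endpoint of the interpolation and gets $\gamma=0$; this gives b).

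The main obstacle is the commutator $[R_\lambda^\sigma, M_a]$. Since $R_\lambda^\sigma$ is a Fourier multiplier in $\xi'$ only, and $a$ is genuinely $n$-dimensional, I would use a pseudodifferential/paradifferential symbol-calculus argument: $R_\lambda^\sigma M_a$ is an operator with symbol $\langle\xi',\lambda\rangle^\sigma$ composed with multiplication by $a(x)$, and the commutator has, to leading order, symbol $\sum_{|\beta'|=1}\partial_{\xi'}^{\beta'}\langle\xi',\lambda\rangle^\sigma \cdot D_{x'}^{\beta'}a$, which gains one order: $[R_\lambda^\sigma,M_a]R_\lambda^{-\sigma}$ maps $H^s_{p,\lambda}\to H^s_{p,\lambda}$ boundedly with norm controlled by $\|a\|_{\BUC^{r}}$ (one needs $r'$ derivatives of $a$ to handle the symbol expansion plus one extra for the remainder, matching \eqref{def:r}). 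Rather than develop a full parameter-dependent calculus, I would instead give an elementary argument: commute $R_\lambda^\sigma$ past $M_a$ by writing $R_\lambda^\sigma = \langle D',\lambda\rangle^\sigma$ via its Fourier integral and estimating the kernel of the commutator, or — cleanest — iterate the identity $[R_\lambda^\sigma, M_a] = \sum_{j<\sigma\text{-order}}(\ldots)$ reducing to commutators with integer powers of $\langle D',\lambda\rangle^2$, where the commutator with $\langle D',\lambda\rangle^2 = 1+|D'|^2+|\lambda|^{1/m}$ is just a first-order differential operator with coefficients in derivatives of $a$. In all cases the gain of one derivative in the symbol is what makes the commutator term lower order and hence absorbable, and it is this step where the precise value of $r$ in \eqref{def:r} is forced.

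Once a) and b) are in hand, c) is immediate from the scaling structure: $\langle\lambda\rangle\to\infty$ forces the operator norm of any fixed lower-order term to vanish, and more precisely one splits $M_a$ using the interpolation inequality Lemma~\ref{1.11}~c) — or simply reruns the estimate keeping track of $\langle\lambda\rangle$-powers — so that the contribution of the top-order part is $\le C\|a\|_\infty\,<\,\eps/2$ for $\|a\|_\infty<\delta$, and the remaining finitely many lower-order commutator contributions carry a negative power of $\langle\lambda\rangle$ and are $<\eps/2$ once $|\lambda|\ge\lambda_0(\|a\|_{\BUC^r})$. Part d) for $\R^n_+$ follows by the universal extension operator $e_{\R^n_+}$ of Remark~\ref{1.6}~a): extend $a$ to $\BUC^r(\R^n)$, extend $u$, multiply on $\R^n$, restrict — the retraction/coretraction estimates are parameter-independent by Lemma~\ref{1.11}~a). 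Part e) is the special case $\sigma=0$ on $\R^{n-1}$, which is literally the classical statement used as the base case in the first paragraph, applied with $n$ replaced by $n-1$; no commutator arises, so $r'=|s|$ and $r=\lfloor|s|\rfloor+1$ suffice, and the $\langle\lambda\rangle$-scaling for $B^s_{pp,\lambda}$ is built into Definition~\ref{1.10} in exactly the same way.
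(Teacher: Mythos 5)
Your approach is genuinely different from the paper's, and it has a real gap in part a). The paper works directly in the mixed-smoothness scale: it first proves $\|M_a\|_{L(H^{P}_p(\R^n))}\le C\|a\|_{\BUC^r}$ at the six vertices $P$ of the hexagon $\{(t,\tau):\max\{|t|,|\tau|,|t+\tau|\}=r\}$ (product rule at $(r,0),(0,r)$; duality at $(-r,0),(0,-r),(-r,r)$; the structural characterization in Proposition~\ref{1.4}~g) at $(r,-r)$), then interpolates (via Lemma~\ref{1.5}) along the edges, and finally interpolates along the ray from the origin through $(s,\sigma)$ against $\|M_a\|_{L(L^p)}=\|a\|_\infty$. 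That last ray interpolation is what produces the exponent $\gamma=1-r'/r>0$ for the \emph{whole} operator $M_a$ on $H^{s,\sigma}_p$, in one stroke and with no commutator.

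The gap in your plan is precisely where you replace this by the decomposition $R^\sigma_\lambda M_a R^{-\sigma}_\lambda = M_a + [R^\sigma_\lambda,M_a]R^{-\sigma}_\lambda$. You state that the commutator term is bounded on $H^s_{p,\lambda}$ with norm controlled by $\|a\|_{\BUC^r}$ — but the estimate \eqref{eq:int} you are trying to prove requires the factor $\|a\|_\infty^\gamma$ with $\gamma>0$, uniformly in $\lambda$ (including $\lambda=0$). A bound of the form $\|M_a\|\le C_1\|a\|_{\BUC^r}^{1-\gamma}\|a\|_\infty^\gamma + C_2\|a\|_{\BUC^r}$ does \emph{not} imply \eqref{eq:int}: take $\|a\|_\infty\to 0$ with $\|a\|_{\BUC^r}$ bounded and the right-hand side of \eqref{eq:int} tends to $0$ while your bound does not. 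The commutator's leading symbol involves $\partial_{x'}a$, not $a$, so to recover a $\|a\|_\infty^\gamma$ factor you would in addition need a Landau–Kolmogorov inequality (e.g.\ $\|\partial a\|_\infty \le C\|a\|_\infty^{1/2}\|a\|_{\BUC^2}^{1/2}$) threaded through the symbol expansion term by term; you never invoke this, and without it the proof of a) is incomplete. The scaling argument that kills the commutator for $|\lambda|\to\infty$ is correct and suffices for c), but a) is a statement for \emph{all} $\lambda\in\C$, and at $\lambda=0$ (or bounded $\lambda$) the scaling gives nothing.

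Two smaller points. First, your reduction of the commutator to "integer powers of $\langle D',\lambda\rangle^2$" only works directly when $\sigma\in\Z$; for non-integer $\sigma$ you need genuine symbol calculus for fractional powers, which you only gesture at — this is considerably more machinery than the paper's elementary hexagon argument. Second, in e) the target is $B^s_{pp,\lambda}(\R^{n-1})$, a Besov space, not a Bessel-potential space, so it is not "literally the classical statement"; one must first run a) and b) on $H^{s\pm\rho}_{p,\lambda}(\R^{n-1})$ and then pass to $B^s_{pp,\lambda}$ by real interpolation of the parameter-dependent scale, which is what the paper does.

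Parts c) and d) of your plan are essentially sound and match the paper's: for c), scaling sends derivatives of $\kappa_\lambda^{-1}a$ to zero as $|\lambda|\to\infty$; for d), extend $a$ and $u$ by bounded extension operators, multiply on $\R^n$, restrict.
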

\begin{proof}~\
a) Consider the hexagon which is the convex hull of the vertex set
    \[\mathcal{H}:=\{(r,0), (0,r), (-r,r),(-r,0),(0,-r),(r,-r)\}\]
(see Figure~\ref{fig01}).
In a first step we are going to show that for all $P \in \mathcal{H}$ we can deduce the bound \[\|M_a\|_{L(H^{P}_{p}(\R^n))} \leq C(r)\|a\|_{\BUC^{r}(\R^n)}.\]
For the first two vertices this follows from the fact that $H_{p}^{r,0}(\R^n)=H_{p}^{r}(\R^n)$ and $H_{p}^{0,r}(\R^n)=L^p(\R,H_{p}^{r}(\R^{n-1}))$ due to Lemma~\ref{1.4}~a) as well as the product rule. Their counterparts $(0,-r)$ and $(-r,0)$ can be treated by a duality argument. For the space $H_p^{r,-r}(\R^n)$ we use Lemma~\ref{1.4}~g) to obtain
    \begin{align*}
\|au\|_{H_p^{r,-r}(\R^n)}&\leq C(r) \sum_{j=0}^{r} \|\partial_n^j(au)\|_{H_p^{0,-j}(\R^n)}\\
&\leq C(r)\sum_{j=0}^{r}\sum_{l=0}^j \|(\partial_n^{j-l} a) (\partial_n^l u)\|_{H_p^{0,-j}(\R^n)} \\
&\leq C(r) \sum_{j=0}^{r}\sum_{l=0}^j \|(\partial_n^{j-l} a) (\partial_n^l u)\|_{H_p^{0,-l}(\R^n)} .
\end{align*}
Here we used $l\leq j$ and hence $H_p^{0,-l}(\R^n)$ is continuously embedded in $H_p^{0,-j}(\R^n).$ Furthermore we have $\partial_n^{j-l}a \in \BUC^{l}(\R^n)$ as $j \leq r$. So we may apply the boundedness of $M_{\partial^{j-l}_n a}$ on $H_p^{0,-l}(\R^n)$ and find
\begin{align*}
\|au\|_{H_p^{r,-r}(\R^n)}
&\leq C(r) \sum_{j=0}^{r}\sum_{l=0}^j C(l)\|\partial_n^{j-l}a\|_{\BUC^l(\R^n)}\| \partial_n^l u\|_{H_p^{0,-l}(\R^n)}  \\
&\leq C(r) \|a\|_{\BUC^{r}(\R^n)}\sum_{j=0}^{r}\sum_{l=0}^j  \| \partial_n^l u\|_{H_p^{0,-l}(\R^n)} \\
&= C(r) \|a\|_{\BUC^{r}(\R^n)}\sum_{l=0}^{r} (r-l+1) \| \partial_n^l u\|_{H_p^{0,-l}(\R^n)}\\
&\leq C(r) \|a\|_{\BUC^{r}(\R^n)}\sum_{l=0}^{r}\| \partial_n^l u\|_{H_p^{0,-l}(\R^n)}\\
&\leq C(r)\|a\|_{\BUC^{r}(\R^n)} \|u\|_{H_p^{r,-r}(\R^n)}.
\end{align*}
The last vertex follows by duality again.

In a second step we interpolate along the edges of the hexagon, which is precisely the domain $\{(t,\tau): \max\{|t|,|\tau|,|t+\tau|\}=r\}$.
For any point $P_\theta=(1-\theta) P_0 + \theta P_1$, where $0<\theta<1$ and
$P_0, P_1 \in \mathcal{H}$, we obtain by interpolation $H^{P_\theta}_p(\R^n)=[H^{P_0}_p(\R^n),H^{P_1}_p(\R^n)]_{\theta}$ and thus
\begin{equation}\label{eq:N}
    \|M_a\|_{L(H^{P_\theta}_p(\R^n))}\leq C \|M_a\|^{1-\theta}_{L(H^{P_0}_p(\R^n))}\cdot \|M_a\|^{\theta}_{L(H^{P_1}_p(\R^n))}\leq C(r) \|a\|_{\BUC^{r}(\R^n)}.
\end{equation}
Moreover, we observe that in $L^p(\R^n)=H^{0,0}_p(\R^n)$ we have $\|M_a\|_{L(L^p(\R^n))}=\|a\|_\infty.$
Finally, we interpolate along a straight line that starts in the origin, passes through $(s, \sigma)$ and hits the boundary of the hexagon in the point  $\Big(\frac{rs}{r'},\frac{r\sigma}{r'}\Big)$. More precisely, we use the interpolation
\[H^{s,\sigma}_p(\R^n)
=[L^p(\R^n),H^{rs/r',r\sigma/r'}_p(\R^n)]_{r'/r}\]
to find that
\[\|M_a\|_{L(H^{s,\sigma}_p(\R^n))}\leq C \|M_a\|^{1-r'/r}_{L(L^p(\R^n))}\cdot \|M_a\|^{r'/r}_{L(H^{rs/r',r\sigma/r'}_p(\R^n))}\leq  C(r)\|a\|^{1-\gamma}_{\BUC^{r}(\R^{n})}\|a\|_\infty^\gamma \] for $\gamma:=1-\frac{r'}{r}=1-\frac{r' }{\lfloor r' \rfloor+1}>0$.

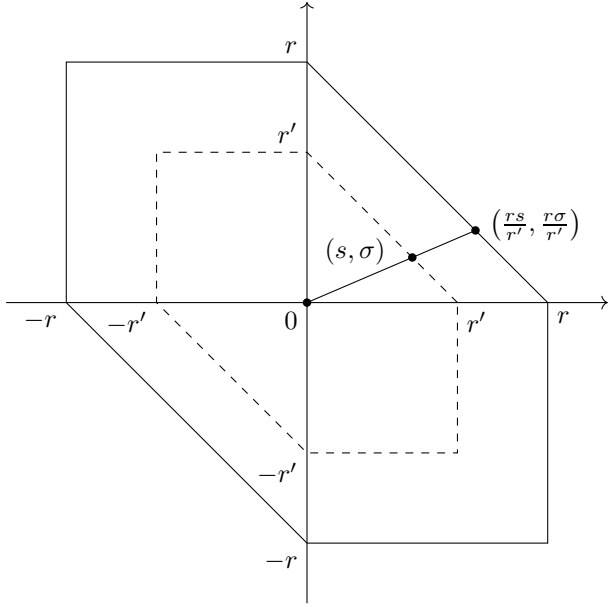
\begin{figure}[H]
	\begin{center}
		\begin{tikzpicture}[scale=0.8]
		\draw (4,0)node[below right] {$r$} -- (0,4)node[above left] {$r$} -- (-4,4) -- (-4,0)node[below left] {$ -r$} --(0,-4)node[below left] {$ -r$} -- (4,-4) -- (4,0) ;
		
		\draw[dashed] (2.5,0)node[below right] {$r'$} -- (0,2.5)node[above left] {$r'$} -- (-2.5,2.5) -- (-2.5,0)node[below left] {$ -r'$} --(0,-2.5)node[below left] {$ -r'$} -- (2.5,-2.5) -- (2.5,0) ;
		
		\draw (0,0) -- (2.8,1.2) ;
		\fill(0,0)circle(2pt);
		\fill(1.75,0.75)circle(2pt);
		\fill(2.8,1.2)circle(2pt);
		
		\node at  (0.8,0.85) {$ (s,\sigma) $ };
		\node at  (3.8,1.3) {$ \left(\frac{rs}{r'},\frac{r\sigma}{r'}\right) $};
		

		\node[below left]{$0$};
		\draw[->] (-5,0) -- (5,0);
		\draw[->] (0,-5) -- (0,5);
		\end{tikzpicture}
	\end{center}
	\caption{\label{fig01}In a first step, we see that the operator $ M_{a} $ is continuous on $ H^{P}_{p}(\R^{n}) $ for every vertex $ P\in \mathcal{H} $ of the outer hexagon and therefore by interpolation continuous on $ H^{P_{\theta}}_{p}(\R^{n}) $ for every $ P_{\theta} $ on its boundary. Finally, we interpolate between that boundary and the origin to get the continuity on $ H^{s,\sigma}_{p}(\R^{n}) $ for every $ (s,\sigma) $ on the boundary of the dashed hexagon. In the origin, we have $ \gamma=1 $, on the boundary of the outer hexagon, we have $\gamma=0$.
	}
\end{figure}

In order to carry the result over to the parameter-dependent norms, we observe the following for any bounded operator $T$ in $H^{s,\sigma}_{p}(\R^{n})$: By Definition~\ref{1.10} we have
\begin{align*}
 \|Tu\|_{H^{s,\sigma}_{p,\lambda}(\R^{n})}
 &=\spk{\lambda}^{s+\sigma-n/p}
   \|\kappa_{\lambda}^{-1}(Tu)\|_{H^{s,\sigma}_p(\R^n)}\\
 &=\spk{\lambda}^{s+\sigma-n/p}
   \|(\kappa_{\lambda}^{-1}T\kappa_{\lambda})(\kappa_{\lambda}^{-1}
   u)\|_{H^{s,\sigma}_p(\R^n)}.
\end{align*}
Dividing by $\|u\|_{H^{s,\sigma}_{p,\lambda}(\R^{n})}
=\spk{\lambda}^{s+\sigma-n/p}
\|\kappa_{\lambda}^{-1}u\|_{L(H^{s,\sigma}_p(\R^n))}$ and
passing to the supremum over all $0\not=u\in H^{s,\sigma}_{p,\lambda}(\R^{n})$ we conclude that
 \[\|T\|_{L(H^{s,\sigma}_{p,\lambda}(\R^{n}))}=
   \|\kappa_{\lambda}^{-1}T\kappa_{\lambda}\|_{L(H^{s,\sigma}_{p}(\R^{n}))}.\]
Since we have
\begin{align*}
    \kappa^{-1}_\lambda M_a \kappa_\lambda u(x)=\kappa_\lambda^{-1}[a(x)u(\spk{\lambda}x)]
   =a(\spk{\lambda}^{-1}x)u(x)=(\kappa^{-1}_\lambda a)(x)u(x)
\end{align*}
it holds
   \begin{align*}
       \|M_a\|_{L(H^{s,\sigma}_{p,\lambda}(\R^{n}))}=
   \|M_{\kappa_{\lambda}^{-1}a}\|_{L(H^{s,\sigma}_{p}(\R^{n})).}
   \end{align*}
Thus
\begin{align}\label{1-7}
 \|M_a\|_{L(H^{s,\sigma}_{p,\lambda}(\R^{n}))}
 &\leq C(r)  \|\kappa_{\lambda}^{-1} a\|_{\BUC^{r}(\R^n)}^{1-\gamma}
   \|\kappa_{\lambda}^{-1} a\|_{\infty}^{\gamma}
 \leq C(r) \|a\|_{\BUC^{r}(\R^n)}^{1-\gamma}
    \|a\|_{\infty}^{\gamma},
\end{align}
since $\|\partial^\alpha(\kappa_{\lambda}^{-1} a)\|_{\infty}
=\spk{\lambda}^{-|\alpha|}\|\partial^\alpha a\|_\infty\le
\|\partial^\alpha a\|_\infty$
for every $\alpha\in\N_0^n$ with $|\alpha|\le r$.

b) Obviously $\lceil r' \rceil \neq r$ only for $r' \in \N_0.$ So for $r' \not\in \N_0,$ the proof from a) remains unchanged. For $r' \in \N_0,$ we proceed analogously as in a), just replacing $r$ by $\lceil r' \rceil=r'$ but stop at \eqref {eq:N}. Carrying over this result to the parameter-dependent norms as before yields \eqref{eq:int} with $\gamma=0$.

c) Let $\epsilon>0$ and choose $\delta\in (0,1)$ with $\delta^\gamma < \frac{\epsilon}{2C(r)}$. Let $a\in \BUC^{r}(\R^n)$ with $\|a\|_\infty<\delta$.
As $\|\partial^\alpha (\kappa_\lambda^{-1} a)\|_\infty = \langle\lambda\rangle^{-|\alpha|}\|\partial^\alpha a\|_\infty$, there
is a $\lambda_0= \lambda_0(\|a\|_{\BUC^{r}(\R^n)})>0$ such that
\[ \sum_{|\alpha|=1}^{r} \|\partial^\alpha (\kappa_\lambda^{-1} a)\|_\infty \le  1\]
for all $\lambda\in\C$ with $|\lambda|\ge \lambda_0$. For all such $\lambda$ we obtain
\[ \|\kappa_\lambda^{-1} a\|_{\BUC^{r}(\R^n)} \le \|\kappa_\lambda^{-1} a\|_\infty + 1  = \|a\|_\infty + 1 < \delta +1 <2 \]
and therefore, using the analog of \eqref{1-7},
$\|M_a\|_{L(H^{s,\sigma}_{p,\lambda}(\R^{n}))}
\le C(r)2^{1-\gamma} \delta^\gamma <\eps$.

d) There exists a bounded extension operator $E_{\R^n_+}\colon \BUC^r({\R^n_+})\to \BUC^r(\R^n)$ for any $r \in \N_0$ (see, e.g., the construction in \cite{Adams-Fournier03}, Theorem~5.19). Then for~$\R^n_+$ part a) follows from
\begin{align*}
    \|M_au\|_{H_{p,\lambda}^{s,\sigma}(\R^n_+)} & \le \|M_{E_{\R^n_+}a}(e_{\R^n_+}u)\| _{H_{p,\lambda}^{s,\sigma}(\R^n)}\le C \|E_{\R^n_+}a\|_{\BUC^{r}(\R^n)} \|e_{\R^n_+}u\|_{H_{p,\lambda}^{s,\sigma}(\R^n)} \\
    & \le C \|a\|_{\BUC^{r}({\R^n_+})}
    \|u\|_{H_{p,\lambda}^{s,\sigma}(\R^n_+)}.
\end{align*}

e) Taking $\sigma=0$, we use the result from a) and b) for the spaces $H^{s+\rho}_{p,\lambda}(\R^{n-1})$ and $H^{s-\rho}_{p,\lambda}(\R^{n-1})$ for a sufficiently small $\rho>0$ such that $|s \pm \rho|\leq \lfloor |s| \rfloor +1$ still holds. Then the result follows by real interpolation of the $\lambda$-dependent, but classical Sobolev spaces, which was established in \cite[(1.16)]{Grubb-Kokholm93}.
\end{proof}

\begin{remark}
  In the case of $r' \in \N_0$ in Lemma~\ref{1.12}~b) and $\sigma=\lambda=0$, we directly get back the classical results for the usual Sobolev spaces $H^s_{p}(\R^n)$.

  Furthermore, we would like to note that pointwise multipliers in Besov spaces with $\lambda=0$ are described, e.g., in \cite{Mazya-Shaposhnikova09} and
\cite{Yuan-Sickel-Yang10}. In particular, it is known that functions which are H\"older continuous with
H\"older index larger than $|s|$ are multipliers in $B_{pp}^s(\R^{n-1})$ (see \cite{Runst-Sickel96}, Theorem~4.7.1 (ii)). For our purposes, however, Lemma~\ref{1.12}~e) is sufficient.

\end{remark}

\section{Boundary value problems in the half-space}
\label{sec3}

We now deal with boundary value problems in domains and in the half-space. In the following, let
$\Omega\subset\R^n$ be a  domain with compact and sufficiently smooth boundary $\Gamma$, or
let $\Omega=\R^n_+$ with boundary $\Gamma=\R^{n-1}$. 
We  consider the boundary value problem
\begin{equation}
  \label{2-1}
  \begin{aligned}
    (\lambda - A) u & = f \quad \text{ in } \Omega,\\
    B_j u & = g_j \quad (j=1,\dots,m)\; \text{ on }\Gamma,
  \end{aligned}
\end{equation}
where $A$ and $B_j$ are linear differential operators of order $2m$ and linear boundary operators
of order $m_j<2m$, respectively, of the form
\begin{align}
    A & = A(x,D) = \sum\nolimits_{|\alpha|\le 2m} a_\alpha(x) D^\alpha,\label{2-6}\\
    B_j & =  B_j(x,D) = \sum\nolimits_{|\beta|\le m_j} b_{j\beta}(x)\gamma_0 D^\beta.\label{2-7}
\end{align}
We also  write $B=(B_1,\ldots,B_m)$. Here, $a_\alpha\colon \overline \Omega\to\C$ and $b_{j\beta}\colon \Gamma\to\C$
are sufficiently smooth functions. More precisely, we will consider the following smoothness assumptions, depending
on $(s,\sigma)\in\R^2$.
\begin{itemize}
  \item[\textbf{(S1)}]
 Let $r'=r'(s-2m,\sigma):= \max\{|s-2m|,|\sigma|,|s+\sigma-2m|\}$ and $r:= \lfloor r'\rfloor +1$. We assume $a_\alpha\in \BUC^r(\Omega)$ for all $|\alpha| = 2m$ and $a_\alpha \in \BUC^{\lceil r' \rceil}(\Omega) $ for all $|\alpha|<2m$.
  \item[\textbf{(S2)}]
   If $\Omega$ is unbounded, then $a_\alpha(\infty) := \lim_{x\in \Omega,\, |x|\to\infty} a_\alpha(x)$ exists for all $|\alpha|\leq2m$.
  In addition, all derivatives of the function
  \[ x\mapsto a_\alpha \bigg(\frac{x}{|x|^2}\bigg) \quad  (x\neq 0)\]
  up to order $r$ for $|\alpha|=2m$ (and up to order $\lceil r' \rceil$ for $|\alpha| < 2m$) possess a continuous extension to $x=0$.

  \item[\textbf{(S3)}]
  For each $j\in\{1,\dots,m\}$, let $k_j':= |s+\sigma-m_j-\frac1p|$  and $k_j:= \lfloor k_j'\rfloor +1$. We assume
  $b_{j\beta}\in \BUC^{k_j}(\Gamma)$ for all $|\beta|= m_j$ and $b_{j\beta}\in \BUC^{\lceil k_j'\rceil}(\Gamma)$ for all $|\beta_j|<m_j$.
  \item [\textbf{(S4)}] If $\Omega=\R^n_+$, then
  $b_{j\beta}(\infty) := \lim_{x\in\R^{n-1},\, |x|\to\infty} b_{j\beta}(x)$ exists for all $j\in\{1,\dots,m\}$ and
  $|\beta|\leq m_j$. In addition, all derivatives of the function
 \[
       x\mapsto
      b_{j\beta} \bigg(\frac{x}{|x|^2}\bigg) \quad (x \neq 0) \]
  up to order $k_j$ for $|\beta|=m_j$ (and up to order $\lceil k_j' \rceil$ for $|\beta|<m_j$) possess a continuous extension to $x=0$.
  \item [\textbf{(S5)}] The domain $\Omega$ is of class $C^{2m+\lceil r' \rceil}$.
\end{itemize}

In the following, let $\Lambda\subset \C$ be a closed sector in the complex plane with vertex at the origin.
 Then the family $\lambda-A(x,D)$ is called parameter-elliptic in $\Lambda$ if  the principal symbol $A_0(x,\xi) := \sum_{|\alpha|=2m} a_\alpha(x)  \xi^\alpha$ 
satisfies
\begin{equation}\label{2-5}
 |\lambda-A_0(x,\xi) | \ge C \big( |\lambda|+ |\xi|^{2m}\big)
\quad (x\in\overline \Omega,\, \lambda\in \Lambda,\, \xi\in\R^n,\, (\lambda,\xi)\not=0)
\end{equation}
for some constant $C>0$.
Similarly, we define the  principal symbols $B_{0,j}(x,\xi) := \sum_{|\beta|=m_j} b_{j\beta}(x)\xi^\beta$.
The boundary value problem is called parameter-elliptic in $\Lambda$
if $\lambda-A(x,D)$ is parameter-elliptic in $\Lambda$ and  if the following Shapiro--Lopatinskii condition holds:

Let $x_0\in\partial\Omega$ be an arbitrary point of the boundary, and rewrite the boundary value problem $(\lambda-A_0(x_0,D),$ $B_{0,1}(x_0,D),\dots, $ $B_{0,m}(x_0,D))$ in  the coordinate system associated with $x_0$, which is obtained from the original one by a rotation after which the positive $x_n$-axis has the direction of the interior normal to $\partial \Omega$ at~$x_0$. Then the trivial solution $w=0$ is the only stable solution of the ordinary differential equation on the half-line
\begin{align*}
   ( \lambda -  A_0(x_0,\xi', D_n ))  w(x_n) & = 0 \quad (x_n\in (0,\infty)),\\
  B_{0,j}(x_0,\xi',D_n) w(0) & = 0 \quad (j=1,\dots,m)
\end{align*}
 for all $\xi'\in\R^{n-1}$ and $\lambda\in\Lambda$ with $(\xi',\lambda)\not=0$.

In this section we show that parameter-elliptic problems induce an isomorphism between parameter-dependent spaces $($in the sense of Definition~\ref{1.10.0}$)$. We focus on the case of the half-space.

\subsection{Model problems and small perturbations}

\begin{lemma}[Model problem in $\R^n$]\label{2.1}
Let $A_0(D)=\sum_{|\alpha|=2m} a_\alpha D^\alpha$
have constant coefficients $a_\alpha\in\C$, and let
$\lambda-A_0(D)$ be parameter-elliptic in
$\Lambda$. Then, for every $s,\sigma\in\R$ and every $\lambda_0>0$, the operator family
\begin{align}\label{2-2}
 \lambda-A_0\colon H_{p,\lambda}^{s,\sigma}(\R^n)\to H_{p,\lambda}^{s-2m,\sigma}(\R^n)
\end{align}
is an isomorphism for $\lambda\in\Lambda$ with $|\lambda|\ge \lambda_0$.
\end{lemma}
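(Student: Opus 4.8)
The plan is to realise $\lambda-A_0(D)$ as a Fourier multiplier operator and to reduce the statement to uniform Mikhlin-type estimates for its symbol and for the reciprocal symbol.

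\emph{Reformulation.} By Lemma~\ref{1.11}~b), both $\spk{D,\lambda}^{s}\spk{D',\lambda}^{\sigma}\colon H^{s,\sigma}_{p,\lambda}(\R^n)\to L^p(\R^n)$ and $\spk{D,\lambda}^{s-2m}\spk{D',\lambda}^{\sigma}\colon H^{s-2m,\sigma}_{p,\lambda}(\R^n)\to L^p(\R^n)$ are isometric isomorphisms. Since $\lambda-A_0(D)$, $\spk{D,\lambda}^{t}$ and $\spk{D',\lambda}^{\tau}$ are Fourier multipliers in $\xi$ and therefore commute, transporting the family \eqref{2-2} through these two identifications turns it into the family of Fourier multiplier operators on $L^p(\R^n)$ with symbol
\[
  \mathfrak m_\lambda(\xi):=\spk{\xi,\lambda}^{-2m}(\lambda-A_0(\xi)).
\]
Hence it suffices to prove that $\mathfrak m_\lambda$ and $\mathfrak m_\lambda^{-1}$ are Fourier multipliers on $L^p(\R^n)$ with operator norms bounded uniformly in $\lambda\in\Lambda$ satisfying $|\lambda|\ge\lambda_0$; then each $\mathfrak m_\lambda(D)$ is invertible with inverse $\mathfrak m_\lambda^{-1}(D)$, and transporting this back gives the isomorphism \eqref{2-2}.

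\emph{Mikhlin estimates.} The next step is to verify that $|\xi|^{|\alpha|}\,|\partial_\xi^\alpha\mathfrak m_\lambda(\xi)|$ and $|\xi|^{|\alpha|}\,|\partial_\xi^\alpha\mathfrak m_\lambda^{-1}(\xi)|$ are bounded, uniformly in $\xi\ne 0$ and in $\lambda$, for all multi-indices $\alpha$ of length at most $n$. The ingredients are: $\spk{\xi,\lambda}^{2m}\approx 1+|\xi|^{2m}+|\lambda|$ and $|\xi|\le\spk{\xi,\lambda}$; the weight bound $|\partial_\xi^\beta\spk{\xi,\lambda}^{t}|\le C_{\beta,t}\,\spk{\xi,\lambda}^{t-|\beta|}$; and the $2m$-homogeneity of $A_0$, which gives $|\partial_\xi^\beta A_0(\xi)|\le C_\beta|\xi|^{2m-|\beta|}\le C_\beta\spk{\xi,\lambda}^{2m-|\beta|}$ for $|\beta|\le 2m$ and $\partial^\beta A_0=0$ for $|\beta|>2m$. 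Writing $\mathfrak m_\lambda(\xi)=\lambda\spk{\xi,\lambda}^{-2m}-A_0(\xi)\spk{\xi,\lambda}^{-2m}$ and using the Leibniz rule, each term is $O(\spk{\xi,\lambda}^{-|\alpha|})=O(|\xi|^{-|\alpha|})$ uniformly, since $|\lambda|\spk{\xi,\lambda}^{-2m}$ is bounded. For the reciprocal, parameter-ellipticity \eqref{2-5} yields $|\lambda-A_0(\xi)|\ge C(|\lambda|+|\xi|^{2m})$ for $\lambda\in\Lambda$, $(\xi,\lambda)\ne 0$; because $|\lambda|\ge\lambda_0>0$ this improves to $|\lambda-A_0(\xi)|\ge C(\lambda_0)\spk{\xi,\lambda}^{2m}$ uniformly, so $|\mathfrak m_\lambda^{-1}|\le C$. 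The standard estimate for derivatives of $(\lambda-A_0(\xi))^{-1}$, based on this lower bound together with the bounds on $\partial^\beta A_0$, then gives $|\partial_\xi^\alpha(\lambda-A_0(\xi))^{-1}|\le C\spk{\xi,\lambda}^{-2m-|\alpha|}$; multiplying by $\spk{\xi,\lambda}^{2m}$ and applying Leibniz once more yields $|\xi|^{|\alpha|}\,|\partial_\xi^\alpha\mathfrak m_\lambda^{-1}(\xi)|\le C$ uniformly. Mikhlin's theorem then provides the required uniform $L^p$-bounds.

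\emph{Main difficulty.} There is no essential obstacle, but the uniformity of all constants in $\lambda$ must be watched carefully. In particular, \eqref{2-5} by itself controls only $|\lambda|+|\xi|^{2m}$ and not the constant term in $\spk{\xi,\lambda}^{2m}\approx 1+|\xi|^{2m}+|\lambda|$; this is precisely where the hypothesis $|\lambda|\ge\lambda_0>0$ enters, and consistently the uniform bound on $\mathfrak m_\lambda^{-1}$ degenerates as $\lambda\to 0$. Alternatively, the argument may be phrased within the parameter-dependent symbol calculus of \cite{Grubb-Kokholm93}: $\lambda-A_0(\xi)$ is a parameter-elliptic symbol of order $2m$, its inverse is a symbol of order $-2m$, and parameter-dependent symbols of order $0$ act as uniformly bounded Fourier multipliers on $L^p(\R^n)$, which gives the same conclusion.
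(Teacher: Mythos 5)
Your proposal is correct and takes essentially the same approach as the paper: reduce the problem to $L^p$ via the isometric isomorphisms $\spk{D,\lambda}^{t}\spk{D',\lambda}^{\tau}$, using that $\lambda-A_0(D)$ commutes with them, and then appeal to uniform Mikhlin bounds. The only stylistic difference is that you verify the Mikhlin conditions for $\mathfrak m_\lambda$ and $\mathfrak m_\lambda^{-1}$ explicitly in one step, whereas the paper first reduces the general $\sigma$ to the case $\sigma=0$ by conjugating with $\spk{D',\lambda}^{\sigma}$ and then cites the known result (\cite{Grubb95}, Theorem~1.7, "or \ldots{} immediately from Mikhlin's theorem") for that case; your version is a bit more self-contained but not a different argument.
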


\begin{proof}
The result is well known in case $\sigma=0$, see \cite{Grubb95}, Theorem~1.7, or
can be obtained immediately from Mikhlin's theorem. Let us denote by $(\lambda-A_0)^{-1}_{(s,0)}$ \
the corresponding inverse operator. We use the description of the norm in $H_{p,\lambda}^{s,\sigma}(\R^n)$ from Lemma~\ref{1.11}~b).

Since $A_0$ commutes with $\spk{D^{\prime},\lambda}^{\sigma}$ and
$\spk{D^{\prime},\lambda}^{\sigma}\colon H_{p, \lambda}^{s,\sigma}(\R^n)\to H_{p, \lambda}^{s,0}(\R^n)$
is an isometric isomorphism, the inverse to \eqref{2-2} is
\[(\lambda-A_0)^{-1}_{(s,\sigma)}
     =\spk{D^{\prime},\lambda}^{-\sigma}(\lambda-A_0)^{-1}_{(s,0)}\spk{D^{\prime},\lambda}^{\sigma},\]
which then has the same uniform bound as $(\lambda-A_0)^{-1}_{(s,0)}$.
\end{proof}
Let us now pass to the situation in the half-space, where we consider the following boundary problem.

\begin{theorem}[Model problem in $\R^n_+$]\label{2.2}
Let $(\lambda-A_0,B_0)$ be parameter-elliptic in~$\Lambda$. Here again, we have $A_0(D)=\sum_{|\alpha|=2m} a_\alpha D^\alpha$ with constant coefficients $a_\alpha \in \C$, as well as $B_0:=( B_{0,1}(D),\ldots, B_{0,m}(D))$ where $ B_{0,j}(D) = \sum_{|\beta|= m_j} b_{j\beta}\gamma_0 D^\beta$
with constant coefficients $b_{j\beta}\in\C$ for $j=1,\ldots,m.$
  Then, for every
  $s>\max_j m_j +\frac{1}{p}$, $\sigma\in\R$, and $\lambda_0>0$, the family of operators
\begin{equation}\label{2-3}
 \begin{pmatrix}\lambda-A_0\\ B_0\end{pmatrix}:
 H_{p,\lambda}^{ s,\sigma}(\R^n_+)\to
  H_{p,\lambda}^{s-2m,\sigma}(\R^n_+)\times
  \prod_{j=1}^m  B_{pp,\lambda}^{s+\sigma-m_j-1/p}(\R^{n-1})
\end{equation}
is an isomorphism for $\lambda\in\Lambda$ with $|\lambda|\ge \lambda_0$.
\end{theorem}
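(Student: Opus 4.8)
The plan is to reduce the statement to the already-available case $\sigma=0$ by conjugating with the tangential order-reduction multiplier $\spk{D',\lambda}^\sigma$, exactly in the spirit of the proof of Lemma~\ref{2.1}. First recall that for $\sigma=0$ the assertion is classical parameter-dependent elliptic boundary value theory: under the standing hypothesis $s>\max_j m_j+\frac1p$ (which is precisely what makes the traces $B_{0,j}$ act on $H^s_p(\R^n_+)$), the family
\[
  \begin{pmatrix}\lambda-A_0\\ B_0\end{pmatrix}\colon
  H^s_{p,\lambda}(\R^n_+)\longrightarrow
  H^{s-2m}_{p,\lambda}(\R^n_+)\times\prod_{j=1}^m B^{s-m_j-1/p}_{pp,\lambda}(\R^{n-1})
\]
is an isomorphism for $\lambda\in\Lambda$, $|\lambda|\ge\lambda_0$; this is contained in the work of Grubb and Kokholm (\cite{Grubb-Kokholm93}; see also \cite{Grubb95} and \cite{Grubb96}). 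Denote its inverse by $\mathcal{R}^{(s,0)}_\lambda$, which is a continuous family of operators in the sense of Definition~\ref{1.10.0}.

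Next I would collect the mapping and commutation properties of $\Phi_\lambda:=\spk{D',\lambda}^\sigma$ that drive the reduction. Since $\Phi_\lambda$ acts only in the tangential variables, it preserves support in $\overline{\R^n_+}$ (cf.\ Remark~\ref{1.6}~c)); by Remark~\ref{1.6}~c) together with Lemma~\ref{1.11}~a)--b) it induces isomorphisms $\Phi_\lambda\colon H^{s,\sigma}_{p,\lambda}(\R^n_+)\to H^s_{p,\lambda}(\R^n_+)$ and $\Phi_\lambda\colon H^{s-2m,\sigma}_{p,\lambda}(\R^n_+)\to H^{s-2m}_{p,\lambda}(\R^n_+)$, with inverse $\spk{D',\lambda}^{-\sigma}$ and norms independent of $\lambda$. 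On the boundary, the tangential multiplier $\spk{D',\lambda}^\sigma$ on $\R^{n-1}$ is an isomorphism $B^{s+\sigma-m_j-1/p}_{pp,\lambda}(\R^{n-1})\to B^{s-m_j-1/p}_{pp,\lambda}(\R^{n-1})$, uniformly in $\lambda$: using $\kappa_\lambda^{-1}\spk{D',\lambda}^\sigma\kappa_\lambda=\spk{\lambda}^\sigma\spk{D'}^\sigma$ (as in the proof of Lemma~\ref{1.11}~b)), the factor $\spk{\lambda}^\sigma$ exactly compensates the shift between the normalisation exponents of the two parameter-dependent Besov norms, while $\spk{D'}^\sigma$ is an isomorphism of the (non-parameter-dependent) Besov scale. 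Finally, $A_0(D)$ and every $D^\beta$ commute with $\Phi_\lambda$ as constant-coefficient Fourier multipliers, and a short computation with the partial Fourier transform in $x'$ gives $\gamma_0\Phi_\lambda=\spk{D',\lambda}^\sigma\gamma_0$ on $\mathscr S(\R^n)$, the multiplier on the right acting on $\R^{n-1}$; by density of $\mathscr S(\R^n)|_{\R^n_+}$ and continuity this carries over to the generalised traces, whence $B_{0,j}\Phi_\lambda=\spk{D',\lambda}^\sigma B_{0,j}$ on $H^{s,\sigma}_{p,\lambda}(\R^n_+)$ for every $j$.

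Assembling these identities, I would check the intertwining relation: for $u\in H^{s,\sigma}_{p,\lambda}(\R^n_+)$,
\[
  \begin{pmatrix}\lambda-A_0\\ B_0\end{pmatrix}u
  =\operatorname{diag}\!\big(\spk{D',\lambda}^{-\sigma},\,\spk{D',\lambda}^{-\sigma}\big)
   \begin{pmatrix}\lambda-A_0\\ B_0\end{pmatrix}\Phi_\lambda u,
\]
where the two factors $\spk{D',\lambda}^{-\sigma}$ on the right act on $\R^n$ and on $\R^{n-1}$, respectively. Equivalently, the operator \eqref{2-3} factors as $\operatorname{diag}(\spk{D',\lambda}^{-\sigma},\spk{D',\lambda}^{-\sigma})\circ\bigl(\begin{smallmatrix}\lambda-A_0\\ B_0\end{smallmatrix}\bigr)_{(s,0)}\circ\Phi_\lambda$, a composition of three continuous families of isomorphisms; hence it is itself an isomorphism, with inverse $\spk{D',\lambda}^{-\sigma}\circ\mathcal{R}^{(s,0)}_\lambda\circ\operatorname{diag}(\spk{D',\lambda}^\sigma,\spk{D',\lambda}^\sigma)$, whose operator norm is bounded uniformly for $\lambda\in\Lambda$, $|\lambda|\ge\lambda_0$, by the product of the three uniform bounds.

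The argument involves no genuine analytic difficulty beyond the $\sigma=0$ theory invoked at the outset; the only points requiring a little care are the uniform-in-$\lambda$ mapping properties of $\spk{D',\lambda}^\sigma$ on the parameter-dependent Besov spaces and its commutation with the (generalised) boundary traces, both handled above. (If one wished to avoid quoting the half-space model-problem result for $\sigma=0$, one could instead construct the inverse directly from the Poisson operators obtained by solving $(\lambda-A_0(\xi',D_n))w=0$ on $(0,\infty)$ together with the $m$ boundary conditions for each fixed $\xi'$; but the conjugation argument is much shorter.)
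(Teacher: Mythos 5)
Your proposal is correct and follows essentially the same route as the paper's proof of Theorem~\ref{2.2}: both reduce to the known $\sigma=0$ case (Grubb's result) by conjugating with the tangential order-reduction operator $\spk{D',\lambda}^\sigma$, which commutes with all the constant-coefficient operators involved. Your write-up is somewhat more explicit about the uniform-in-$\lambda$ mapping properties of $\spk{D',\lambda}^\sigma$ on the parameter-dependent Besov spaces and about commutation with the generalised trace, which the paper leaves implicit.
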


\begin{proof}
The proof is similar to that of Lemma~\ref{2.1}. The result is known for $\sigma =0$, see
\cite{Grubb95}, Theorem~1.9;
let $L_{(s,0)}(\lambda)$ be the inverse. All involved operators commute with $\spk{D',\lambda}^\sigma $.
Hence the inverse operator $L_{(s,\sigma )}(\lambda)$ for general $\sigma $ is given by
 $$\spk{D',\lambda}^{-\sigma} L_{(s,0)(\lambda)}(\lambda)
    \mathrm{diag}(\spk{D',\lambda}^\sigma ,\ldots,\spk{D',\lambda}^\sigma ),$$
where the diagonal matrix acts as $\spk{D',\lambda}^\sigma $ on each space on the right-hand side of~\eqref{2-3}. Hence $L_{(s,\sigma )}(\lambda)$ has the same uniform norm-bound as
$L_{(s,0)}(\lambda)$.
\end{proof}

Motivated by the last two results, we define the parameter-dependent spaces
\begin{align}\label{DefE}
  \E^{ s,\sigma } (\Omega) & := H_{p,\lambda}^{s,\sigma}(\Omega) \qquad
  (\text{with $\Omega=\R^n$ or $\Omega=\R^n_+$}),
\end{align}
as well as
\begin{equation}\label{DefF}
\begin{aligned}
  \F^{ s,\sigma }(\R^n) & := H_{p,\lambda}^{s-2m,\sigma}(\R^n),\\
  \F^{s,\sigma}(\R^n_+) & := H_{p,\lambda}^{s-2m,\sigma}(\R^n_+)\times
  \prod_{j=1}^m B_{pp,\lambda}^{s+\sigma-m_j-1/p}(\R^{n-1})
\end{aligned}
\end{equation}
for $s,\sigma\in\R$.
\begin{align*}
  \lambda- A_0\colon \E^{s,\sigma}(\R^n) \to \F^{s,\sigma}(\R^n)\text{ and }
  \binom{\lambda-A_0}{B_0}\colon \E^{s,\sigma}(\R^n_+) \to \F^{s,\sigma}(\R^n_+)
\end{align*}
are  both isomorphisms.
Below, we will consider the case of variable coefficients which are close to constant coefficients in an appropriate sense. As a preparation, we show some auxiliary continuity results.

\begin{lemma}\label{2.3}
Let $(s,\sigma)\in\R^2$.
\begin{enumerate}[a)]
  \item Let $ A$ be a differential operator in $\R^n$ as in \eqref{2-6} and assume
\textnormal{(S1)} to hold. Let $M_A := \max_{|\alpha|= 2m} \|a_\alpha\|_{\BUC^r(\R^n)} + \max_{|\alpha| < 2m} \|a_\alpha\|_{\BUC^{\lceil r'\rceil}(\R^n)} $.
Then for every $\eps>0$ there exist constants $\delta=\delta(\epsilon,s,\sigma)>0$ and
$\lambda_0=\lambda_0(M_A) > 0$ such that
\[  \| A\|_{L(\E^{s,\sigma}(\R^n),\F^{s,\sigma}(\R^n))} < \eps \]
holds for all $\lambda\in\C$ with $|\lambda|\ge \lambda_0$ provided
$\max_{|\alpha|=2m}\|a_\alpha\|_{\infty}<\delta$.
\item Let 
$(A, B)$ be a boundary
value problem of the form \eqref{2-6}--\eqref{2-7} in $\R^n_+$
and assume \textnormal{(S1)} and \textnormal{(S3)} to hold with $s>\max_j m_j + \frac{1}{p}$.
Let
\begin{align*}
     M_{A,B} :=\, \max_{|\alpha|= 2m}&  \|a_\alpha\|_{\BUC^r(\R^n_+)} + \max_{|\alpha|< 2m} \|a_\alpha\|_{\BUC^{\lceil r' \rceil}(\R^n_+)} + \\
&+
\max_{\substack{j=1,\dots,m\\|\beta|= m_j}}
\|b_{j\beta}\|_{\BUC^{k_j}(\R^{n-1})} + \max_{\substack{j=1,\dots,m\\|\beta|< m_j}}
\|b_{j\beta}\|_{\BUC^{\lceil k'_j \rceil }(\R^{n-1})} .
\end{align*}
Then for every $\eps>0$ there exist constants $\delta=\delta(\epsilon,s,\sigma)>0$ and
$\lambda_0=\lambda_0( M_{A,B})> 0$ such that
\[  \Big\|\binom{ A}{ B}\Big\|_{L(\E^{s,\sigma}(\R^n_+),\F^{s,\sigma}(\R^n_+))}
  < \eps\]
holds for all $\lambda\in\C$ with $|\lambda|\ge \lambda_0$ provided
\[ \max_{|\alpha|=2m}\|a_\alpha\|_{\infty} + \max_{\substack{j=1,\dots,m\\|\beta|= m_j}} \|b_{j\beta}\|_\infty
<\delta.\]
\end{enumerate}
\end{lemma}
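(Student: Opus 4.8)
The plan is to estimate $A$ (and each $B_j$) term by term, writing $Au=\sum_{|\alpha|\le 2m}M_{a_\alpha}D^\alpha u$ and $B_ju=\sum_{|\beta|\le m_j}M_{b_{j\beta}}\gamma_0D^\beta u$, and in each sum separating the \emph{principal} part ($|\alpha|=2m$, resp.\ $|\beta|=m_j$) from the \emph{lower-order} part. The single estimate behind everything is the mapping behavior of $D^\alpha$: by Lemma~\ref{1.11}~b) one has $\|D^\alpha u\|_{H^{s-2m,\sigma}_{p,\lambda}(\R^n)}=\|D^\alpha\spk{D,\lambda}^{-2m}\,\spk{D,\lambda}^{s}\spk{D',\lambda}^{\sigma}u\|_{L^p(\R^n)}$, and the symbol $\xi^\alpha\spk{\xi,\lambda}^{-2m}$ of $D^\alpha\spk{D,\lambda}^{-2m}$ satisfies $|\xi^\alpha\spk{\xi,\lambda}^{-2m}|\le\spk{\xi,\lambda}^{|\alpha|-2m}\le\spk{\lambda}^{|\alpha|-2m}$ (using $|\xi|\le\spk{\xi,\lambda}$ and $|\alpha|\le 2m$) together with the analogous derivative bounds; Mikhlin's theorem then yields $\|D^\alpha u\|_{\F^{s,\sigma}(\R^n)}\le C\spk{\lambda}^{|\alpha|-2m}\|u\|_{\E^{s,\sigma}(\R^n)}$ with $C$ independent of $\lambda$, and since $D^\alpha$ commutes with restriction to $\R^n_+$ the same bound holds over $\R^n_+$. (Proposition~\ref{1.4}~f) is not convenient here, since for $\alpha'\neq 0$ its target $H_{p,\lambda}^{s-\alpha_n,\sigma-|\alpha'|}$ need not embed into $H_{p,\lambda}^{s-2m,\sigma}$.)

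For part~a) and $|\alpha|=2m$ I would estimate $\|M_{a_\alpha}D^\alpha u\|_{\F^{s,\sigma}(\R^n)}\le C\|M_{a_\alpha}\|_{L(H_{p,\lambda}^{s-2m,\sigma}(\R^n))}\|u\|_{\E^{s,\sigma}(\R^n)}$; by \textnormal{(S1)} we have $a_\alpha\in\BUC^{r}(\R^n)$ with $r=r(s-2m,\sigma)$, so Lemma~\ref{1.12}~c) supplies $\delta=\delta(\eps,s,\sigma)$ and $\lambda_0$ making this multiplier norm as small as desired once $\|a_\alpha\|_\infty<\delta$ and $|\lambda|\ge\lambda_0$. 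For $|\alpha|<2m$ I would use instead Lemma~\ref{1.12}~b) (the case $\gamma=0$), bounding $\|M_{a_\alpha}\|_{L(H_{p,\lambda}^{s-2m,\sigma}(\R^n))}\le C\|a_\alpha\|_{\BUC^{\lceil r'\rceil}(\R^n)}\le CM_A$ uniformly in $\lambda$, while the gain $\spk{\lambda}^{|\alpha|-2m}\le\spk{\lambda}^{-1}$ provides the smallness for $|\lambda|$ large. Summing over the finitely many $\alpha$ and distributing $\eps$ proves a).

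For part~b) the interior operator $A$ is handled exactly as in a), now over $\R^n_+$, using that $H^{s,\sigma}_{p,\lambda}(\R^n_+)$ inherits the relevant statements (Remark~\ref{1.6}~c)) and that the multiplier bounds for $a_\alpha\in\BUC^{r}(\R^n_+)$, resp.\ $a_\alpha\in\BUC^{\lceil r'\rceil}(\R^n_+)$, come from Lemma~\ref{1.12}~d). For the boundary part, since $D'$ is tangential, $\gamma_0$ commutes with $(D')^{\beta'}$ and $\gamma_0D_n^{\beta_n}=(-i)^{\beta_n}\gamma_{\beta_n}$, so that $\gamma_0D^\beta u=(-i)^{\beta_n}(D')^{\beta'}\gamma_{\beta_n}u$. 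The trace $\gamma_{\beta_n}\colon H^{s,\sigma}_{p,\lambda}(\R^n_+)\to B^{s+\sigma-\beta_n-1/p}_{pp,\lambda}(\R^{n-1})$ is bounded uniformly in $\lambda$ (Definition~\ref{1.7}, Remark~\ref{1.8}, Lemma~\ref{1.11}~a); note $s>\max_jm_j+\tfrac1p\ge\beta_n+\tfrac1p$), and $(D')^{\beta'}\colon B^{t}_{pp,\lambda}(\R^{n-1})\to B^{t-|\beta'|}_{pp,\lambda}(\R^{n-1})$ is bounded uniformly in $\lambda$ (Mikhlin symbol $(\xi')^{\beta'}\spk{\xi',\lambda}^{-|\beta'|}$), whence $\|\gamma_0D^\beta u\|_{B^{s+\sigma-|\beta|-1/p}_{pp,\lambda}(\R^{n-1})}\le C\|u\|_{\E^{s,\sigma}(\R^n_+)}$ uniformly in $\lambda$. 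For $|\beta|=m_j$ I would then make $\|M_{b_{j\beta}}\|_{L(B^{s+\sigma-m_j-1/p}_{pp,\lambda}(\R^{n-1}))}$ small via Lemma~\ref{1.12}~e) (the Besov analog of c)), applicable since \textnormal{(S3)} gives $b_{j\beta}\in\BUC^{k_j}(\Gamma)$; for $|\beta|<m_j$ I would combine the Besov form of Lemma~\ref{1.12}~b) (a uniform bound on $M_{b_{j\beta}}$ in $B^{s+\sigma-m_j-1/p}_{pp,\lambda}(\R^{n-1})$, valid since $b_{j\beta}\in\BUC^{\lceil k_j'\rceil}(\Gamma)$) with the embedding $B^{s+\sigma-|\beta|-1/p}_{pp,\lambda}(\R^{n-1})\hookrightarrow B^{s+\sigma-m_j-1/p}_{pp,\lambda}(\R^{n-1})$, whose norm is $\le C\spk{\lambda}^{|\beta|-m_j}\le C\spk{\lambda}^{-1}$. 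Summing over $j$ and $\beta$ and adding the $A$-part finishes b).

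I do not expect a substantial obstacle here: Lemma~\ref{1.12} already isolates the genuine analytic content, and what remains is the retraction--coretraction reduction to $\R^n_+$ and routine bookkeeping of finitely many terms and of the threshold constants. The one point that will require care — and the reason the smoothness indices in \textnormal{(S1)} and \textnormal{(S3)} are stated exactly as they are — is to keep the regularity demanded of each coefficient matched to the order of the space on which the associated multiplication operator must act: the principal $a_\alpha$ in $\BUC^{r(s-2m,\sigma)}$, the principal $b_{j\beta}$ in $\BUC^{k_j}$ with $k_j$ the integer attached to the Besov order $s+\sigma-m_j-\tfrac1p$, and each lower-order coefficient one derivative less, so that precisely the $\gamma=0$ estimates of Lemma~\ref{1.12}~b) and~e) apply to those terms.
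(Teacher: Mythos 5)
Your proposal is correct and follows essentially the same approach as the paper's proof: split $A$ and each $B_j$ into principal and lower-order parts, make the principal multiplier norms small via Lemma~\ref{1.12}~c)/e), and absorb the lower-order terms with the $\gamma=0$ case of Lemma~\ref{1.12}~b) together with the gain of $\spk{\lambda}^{-1}$ for large $|\lambda|$. The only cosmetic differences are that you extract the factor $\spk{\lambda}^{|\alpha|-2m}$ term by term where the paper instead bounds the grouped remainder $\wt{A}$ via $\|u\|_{H^{s-1,\sigma}_{p,\lambda}(\R^n)}\le C\spk{\lambda}^{-1}\|u\|_{H^{s,\sigma}_{p,\lambda}(\R^n)}$, and that you factor $\gamma_0 D^\beta=(-i)^{\beta_n}(D')^{\beta'}\gamma_{\beta_n}$ rather than applying $\gamma_0$ directly after estimating $D^\beta u$ in $H^{s-m_j,\sigma}_{p,\lambda}(\R^n_+)$.
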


\begin{proof}
a) Let $A_0=\sum_{|\alpha|= 2m}a_\alpha(x)D^\alpha$ be the principal part of $A$
and set $\wt{A}:=A-A_0$. Let $\eps>0$ be fixed and $u\in H^{s,\sigma }_{p,\lambda}(\rz^n)$ arbitrary.
Then, due to Lemma~\ref{1.12}~c), for appropriate $\epsilon'>0$ there exist $\delta(\eps',s,\sigma)>0$ and $\lambda_0(M_A)>0$ such that for $|\lambda| \geq \lambda_0$ we have
\begin{align*}
 \|A_0u\|_{H_{p,\lambda}^{s-2m,\sigma}(\R^n)}
  &\leq  \sum\limits_{|\alpha|=2m}\| a_\alpha(\cdot) D^\alpha u\|_{H_{p,\lambda}^{s-2m,\sigma} (\R^n)}\\
  &\le \epsilon' \sum_{|\alpha|=2m} \|D^\alpha u\|_{H_{p,\lambda}^{s-2m,\sigma} (\R^n)} \le
  \frac\eps2\, \|u\|_{H_{p,\lambda}^{s,\sigma}(\R^n)},
\end{align*}
given $\max_{|\alpha|=2m}\|a_\alpha\|_{\infty}<\delta.$
For $\wt{A}u$ we use Lemma \ref{1.12}~b), as we only need the fact that the coefficients are multipliers, which justifies the weaker regularity assumptions for the lower order terms. Thus, we obtain the estimate
\begin{align*}
 \|\wt{A}u\|_{H_{p,\lambda}^{s-2m,\sigma }(\R^n)}
 &\leq \sum_{|\alpha|<2m}\|a_\alpha(\cdot) D^{\alpha}u\|_{H_{p,\lambda}^{s-2m,\sigma}(\R^n)}\\
 &\leq C M_A\sum_{|\alpha|<2m}\|D^{\alpha}u\|_{H_{p,\lambda}^{s-2m,\sigma}(\R^n)}\\
 &\leq C M_A   \|u\|_{H_{p,\lambda}^{s-1,\sigma }(\R^n)}
 \leq C  M_A\spk{\lambda}^{-1} \|u\|_{H_{p,\lambda}^{s,\sigma}(\R^n)}.
\end{align*}
The last inequality holds true because we have
\begin{align*}
 \|u\|_{H_{p,\lambda}^{s-1,\sigma}(\R^n)}
 =\|\spk{D,\lambda}^{s-1}\spk{D',\lambda}^{\sigma }u\|_{L^p(\R^n)}
  \leq C \spk{\lambda}^{-1} \|\spk{D,\lambda}^{s}\spk{D',\lambda}^{\sigma }u\|_{L^p(\R^n)}
\end{align*}
uniformly in $\lambda$, since
$\spk{\lambda}\spk{\xi,\lambda}^{-1}$ is a Mikhlin multiplier with symbol estimates that are
uniform in $\lambda$. As $\spk{\lambda}^{-1}$ vanishes for $|\lambda|\to\infty$, we can choose $\lambda_0$ so large that
$C M_A \spk{\lambda}^{-1}<\frac{\eps}{2}$ whenever
$|\lambda|\ge\lambda_0$.

b) The calculations for $A$ are similar to a), we just replace the whole space estimates by the half-space estimates. For the boundary operators $B_j$ we use Lemma~\ref{1.12}~e) instead, noting that  (S3)  yields the required smoothness. Hence for $B_j$ we split off the lower order terms again. Then for a given $\eps>0$, again, for appropriate $\eps'>0$ there exist $\delta(\eps',s,\sigma)>0$ and $\lambda_0(M_{A,B})>0$ such that for $|\lambda|\geq\lambda_0$ we obtain
\begin{align*}
 \|B_{0,j}u\|_{B_{pp,\lambda}^{s+\sigma-m_j-1/p}(\R^{n-1})}
  &\leq  \sum\limits_{|\beta|=m_j}\| b_{j\beta}(\cdot)\gamma_0 D^\beta u\|_{B_{pp,\lambda}^{s+\sigma-m_j-1/p}(\R^{n-1})}\\
  &\leq \epsilon' \sum\limits_{|\beta|=m_j} \|\gamma_0  D^\beta u \|_{B_{pp,\lambda}^{s+\sigma-m_j-1/p}(\R^{n-1})}\\
  & \leq C \epsilon' \sum\limits_{|\beta|=m_j} \|D^\beta u \|_{H_{p,\lambda}^{s-m_j,\sigma}(\R^{n}_+)}
 \leq  \frac\epsilon 2\,  \|u \|_{H_{p,\lambda}^{s,\sigma}(\R^{n}_+)},
  \end{align*} given $\max_{|\alpha|=2m}\|a_\alpha\|_{\infty} + \max_{j=1,\dots,m}\max_{|\beta|=m_j} \|b_{j\beta}\|_\infty
<\delta$. Here we also used the continuity of the trace from Definition~\ref{1.7}. The lower order terms can be handled  as in a),  applying Lemma~\ref{1.12}~e) once more.
\end{proof}

\orange{




}
	
\begin{lemma}[Small perturbation in $\R^n$]\label{2.4}
Let $\lambda-A_0$ be as in Lemma~$\ref{2.1}$, and let
$A=A_{0}+\wt{A}$, where
 $$\wt{A}=\wt{A}(x,D)=\sum\limits_{|\alpha|\leq 2m} \wt{a}_\alpha(x) D^\alpha.$$
Moreover, let $s,\sigma \in\rz$ and assume \textnormal{(S1)} to hold. Then there exist $\delta>0$ and $\lambda_0 \geq 1$ such that if $\max_{|\alpha|=2m}\|\wt{a}_\alpha\|_\infty<\delta$, the operator family
 $$\lambda-A\colon \E^{s,\sigma}(\R^n)\to \F^{s,\sigma}(\R^n)$$
is an isomorphism for $\lambda\in\Lambda$ with $|\lambda|\ge\lambda_0$.
\end{lemma}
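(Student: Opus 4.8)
The strategy is a standard Neumann series / perturbation argument, using Lemma~\ref{2.1} to invert the model operator $\lambda-A_0$ and Lemma~\ref{2.3}~a) to control the perturbation $\wt A$. First I would fix $\lambda_0>0$ large enough that Lemma~\ref{2.1} gives: the family $\lambda-A_0\colon \E^{s,\sigma}(\R^n)\to\F^{s,\sigma}(\R^n)$ is an isomorphism for $\lambda\in\Lambda$, $|\lambda|\ge\lambda_0$, with inverse $(\lambda-A_0)^{-1}$ uniformly bounded in $\lambda$; call this uniform bound $K$. Then I would apply Lemma~\ref{2.3}~a) with $\eps:=\frac{1}{2K}$: it yields a $\delta=\delta(\eps,s,\sigma)>0$ and a (possibly larger) $\lambda_0'=\lambda_0'(M_A)\ge\lambda_0$ such that, provided $\max_{|\alpha|=2m}\|\wt a_\alpha\|_\infty<\delta$, one has $\|\wt A\|_{L(\E^{s,\sigma}(\R^n),\F^{s,\sigma}(\R^n))}<\frac{1}{2K}$ for all $\lambda\in\C$ with $|\lambda|\ge\lambda_0'$. (Note that for $\lambda-A$ we apply Lemma~\ref{2.3}~a) to the full operator $\wt A=A-A_0$, so $M_A$ here involves only the coefficients of $\wt A$, and $\Lambda\subset\C$ restricts the relevant parameters.)

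The key computation is then to write
\begin{equation*}
  \lambda-A = \lambda-A_0-\wt A = (\lambda-A_0)\bigl(\id - (\lambda-A_0)^{-1}\wt A\bigr)
\end{equation*}
as operators $\E^{s,\sigma}(\R^n)\to\F^{s,\sigma}(\R^n)$. For $|\lambda|\ge\lambda_0'$ (with the smallness condition on $\wt a_\alpha$) we have
\begin{equation*}
  \bigl\|(\lambda-A_0)^{-1}\wt A\bigr\|_{L(\E^{s,\sigma}(\R^n))} \le K\cdot\tfrac{1}{2K} = \tfrac12 < 1,
\end{equation*}
so $\id-(\lambda-A_0)^{-1}\wt A$ is invertible in $L(\E^{s,\sigma}(\R^n))$ by the Neumann series, with inverse bounded by $2$ uniformly in $\lambda$. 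Hence $\lambda-A$ is an isomorphism with inverse $\bigl(\id-(\lambda-A_0)^{-1}\wt A\bigr)^{-1}(\lambda-A_0)^{-1}$, whose norm is bounded by $2K$ uniformly in $\lambda\in\Lambda$, $|\lambda|\ge\lambda_0'$. Renaming $\lambda_0'$ as $\lambda_0$ (and noting $\lambda_0\ge1$ can always be arranged by enlarging it further) completes the argument.

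I do not expect any serious obstacle here: everything reduces to the two cited results, and the only points requiring a little care are bookkeeping ones — making sure the smallness parameter $\eps$ in Lemma~\ref{2.3}~a) is chosen \emph{after} the uniform bound $K$ for $(\lambda-A_0)^{-1}$ is fixed (so the two $\lambda_0$'s are chosen in the right order), and observing that the uniformity in $\lambda$ of all the bounds is exactly what is needed to conclude that the perturbed family is an isomorphism in the sense of Definition~\ref{1.10.0}. The assumption (S1) is used only insofar as it is the hypothesis under which Lemma~\ref{2.3}~a) applies; no further regularity input is needed.
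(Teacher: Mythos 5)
Your proposal is correct and follows essentially the same route as the paper's proof: the same factorization $\lambda-A=(\lambda-A_0)\bigl(\id-(\lambda-A_0)^{-1}\wt A\bigr)$, the same use of Lemma~\ref{2.3}~a) with the threshold $\eps$ chosen after fixing the uniform bound on $(\lambda-A_0)^{-1}$ (the paper's $\rho$ is your $K$, defined as $\sup_{|\lambda|\ge 1}\|(\lambda-A_0)^{-1}\|$), and the same Neumann series conclusion with inverse bounded by $2K$.
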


\begin{proof}
Using Lemma~\ref{2.1}, we can write
 $$\lambda-A=(\lambda-A_0)\big(I-(\lambda-A_{0})^{-1}\wt{A}\big)\quad (\lambda\not=0).$$
Now let
$\rho:=\sup_{|\lambda|\ge1}\|(\lambda-A_0)^{-1}\|_{{L}(\F^{s,\sigma}(\R^n),\E^{s,\sigma }(\R^n)})$.
Applying Lemma~\ref{2.3}~a) to $\wt{A}$, we can choose $\delta$ and $\lambda_0$ such that
$\|\wt{A}\|_{{L}(\E^{s,\sigma}(\R^n),\F^{s,\sigma}(\R^n)})<1/(2\rho)$
whenever $|\lambda|\ge\lambda_0$ and $\max_{|\alpha|=2m}\|\wt{a}_\alpha\|_\infty<\delta$. Therefore,
 $$\|(\lambda-A_0)^{-1}\wt{A}\|_{{L}(\E^{s,\sigma}(\R^n)})
     <\frac{1}{2}.$$
By the usual Neumann series argument, $I-(\lambda-A_{0})^{-1}\wt{A}$ is invertible with
 $$\|(I-(\lambda-A_{0})^{-1}\wt{A})^{-1}\|_{{L}(\E^{s,\sigma}(\R^n)})< 2$$
for every $|\lambda|\ge\lambda_0$. We conclude that, for such $\lambda$,
 $$(\lambda-A)^{-1} =\big(I-(\lambda-A_{0})^{-1}\wt{A}\big)^{-1}
   (\lambda-A_0)^{-1}$$
with
\begin{align*}
 \|(\lambda-A)^{-1}&\|_{{L}(\F^{s,\sigma}(\R^n),\E^{s,\sigma }(\R^n)}) < 2
    \|(\lambda-A_0)^{-1}\|_{{L}(\F^{s,\sigma}(\R^n),\E^{s,\sigma}(\R^n)}).
\end{align*}
Using Lemma~\ref{2.1} once more completes the proof.
\end{proof}
	
\begin{theorem}[Small perturbation in $\R^n_+$]\label{2.5}
Consider the boundary value problem $(\lambda-A,B)$ with $A=A_0+\wt{A}$ and $B=B_0+\wt{B}$,
where $(\lambda-A_0,B_0)$ is as in Theorem~$\ref{2.2}$,
 $$\wt{A}=\wt{A}(x,D)=\sum\limits_{|\alpha|\leq 2m} \wt{a}_\alpha(x) D^\alpha$$
and $\wt{B}=(  \wt{B}_{1},\ldots,  \wt{B}_{m})$ with
\begin{align*}
 \wt{B}_j=\wt{B}_{j}(x,D) = \sum_{|\beta|\le m_j} \wt{b}_{j\beta}(x)\gamma_0 D^\beta
\end{align*}
and $m_j<2m$. Moreover, let $s,\sigma \in\rz$ with $s>\max_j m_j+\frac{1}{p}$, and
assume \textnormal{(S1) and (S3)} to hold.
Then there exist $\delta>0$ and $\lambda_0\geq 1$ such that if
\[ \max_{|\alpha|=2m}\|\wt{a}_\alpha\|_\infty +
 \max_{\substack{j=1,\dots,m\\|\beta|=m_j}}\|\wt{b}_{j\beta}\|_\infty<\delta,\]
then
\begin{equation}\label{2-4}
 \begin{pmatrix}\lambda-A\\ B\end{pmatrix}\colon
 \E^{ s,\sigma}(\R^n_+)\to \F^{ s,\sigma}(\R^n_+)
\end{equation}
is an isomorphism for $\lambda\in\Lambda$ with $|\lambda|\ge \lambda_0$.
\end{theorem}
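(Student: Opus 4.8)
The plan is to carry over, essentially verbatim, the Neumann series argument used for Lemma~\ref{2.4}, replacing the whole-space resolvent of Lemma~\ref{2.1} by the model boundary value problem of Theorem~\ref{2.2} and the continuity-with-smallness estimate of Lemma~\ref{2.3}~a) by its half-space, boundary-operator version Lemma~\ref{2.3}~b).

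First I would fix, via Theorem~\ref{2.2}, the inverse $L_0(\lambda)$ of the model operator $\binom{\lambda-A_0}{B_0}\colon\E^{s,\sigma}(\R^n_+)\to\F^{s,\sigma}(\R^n_+)$, which exists for $\lambda\in\Lambda$ with $|\lambda|\ge 1$ (here $s>\max_jm_j+\tfrac1p$ is used), and set $\rho:=\sup_{\lambda\in\Lambda,\,|\lambda|\ge1}\|L_0(\lambda)\|_{L(\F^{s,\sigma}(\R^n_+),\,\E^{s,\sigma}(\R^n_+))}<\infty$. Since $A=A_0+\wt A$ and $B=B_0+\wt B$, one has
\[
 \binom{\lambda-A}{B}=\binom{\lambda-A_0}{B_0}+P,\qquad P:=\binom{-\wt A}{\wt B}\colon\E^{s,\sigma}(\R^n_+)\to\F^{s,\sigma}(\R^n_+),
\]
and hence, as operators $\E^{s,\sigma}(\R^n_+)\to\F^{s,\sigma}(\R^n_+)$,
\[
 \binom{\lambda-A}{B}=\binom{\lambda-A_0}{B_0}\,\bigl(I+L_0(\lambda)P\bigr).
\]
Because $A_0$ and $B_0$ have constant (smooth) coefficients, the perturbation pair $(\wt A,\wt B)$ still satisfies \textnormal{(S1)} and \textnormal{(S3)}, so Lemma~\ref{2.3}~b) applies to it: with the prescribed bound $\eps:=\tfrac1{2\rho}$ it yields $\delta=\delta(\eps,s,\sigma)>0$ and $\lambda_0=\lambda_0(M_{\wt A,\wt B})>0$ — which we enlarge so that $\lambda_0\ge1$ — such that $\|P\|_{L(\E^{s,\sigma}(\R^n_+),\,\F^{s,\sigma}(\R^n_+))}<\tfrac1{2\rho}$ for all $\lambda\in\C$ with $|\lambda|\ge\lambda_0$, provided $\max_{|\alpha|=2m}\|\wt a_\alpha\|_\infty+\max_{j,\,|\beta|=m_j}\|\wt b_{j\beta}\|_\infty<\delta$.

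For $\lambda\in\Lambda$ with $|\lambda|\ge\lambda_0$ it then follows that $\|L_0(\lambda)P\|_{L(\E^{s,\sigma}(\R^n_+))}\le\rho\cdot(2\rho)^{-1}=\tfrac12<1$, so by the standard Neumann series argument $I+L_0(\lambda)P$ is invertible in $L(\E^{s,\sigma}(\R^n_+))$ with $\|(I+L_0(\lambda)P)^{-1}\|<2$, uniformly in $\lambda$. Hence \eqref{2-4} is invertible with $\binom{\lambda-A}{B}^{-1}=(I+L_0(\lambda)P)^{-1}L_0(\lambda)$, whose norm is bounded by $2\rho$ uniformly for such $\lambda$; this is precisely an isomorphism of parameter-dependent spaces in the sense of Definition~\ref{1.10.0}.

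I do not expect a genuine obstacle here: all the analytic content has already been absorbed into Lemma~\ref{2.3}~b) (and, behind it, Lemma~\ref{1.12}~e) together with the model problem of Theorem~\ref{2.2}). The only points needing a little care are bookkeeping: that the threshold $\delta$ from Lemma~\ref{2.3}~b) depends only on $\eps$, hence only on the fixed quantity $\rho$, and not on $\lambda$; that $\lambda_0$ depends on the (fixed) full $\BUC$-norms of the perturbation coefficients but may always be taken large enough for Theorem~\ref{2.2} to apply simultaneously; and that comparing the product norm on $\F^{s,\sigma}(\R^n_+)$ with the separate operator norms of $\wt A$ and the $\wt B_j$ costs only a harmless equivalence constant, which can be absorbed into $\eps$.
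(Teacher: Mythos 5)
Your argument is correct and coincides with the paper's own proof: both factor $\binom{\lambda-A}{B}=\binom{\lambda-A_0}{B_0}\bigl(I+L(\lambda)\binom{-\wt A}{\wt B}\bigr)$, bound the perturbation by $\frac{1}{2\rho}$ via Lemma~\ref{2.3}~b), and conclude by the Neumann series exactly as in Lemma~\ref{2.4}. No differences worth noting.
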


\begin{proof}
We use Theorem~\ref{2.2}. If $L(\lambda)$ denotes the inverse of the map in
\eqref{2-3}, we can write
\[\binom{\lambda-A}{B}=\binom{\lambda-A_0}{B_0}+\binom{-\wt A}{\wt B}=\binom{\lambda-A_0}{B_0}\bigg(I+L(\lambda)\binom{-\wt A}{\wt B}\bigg). \]
In analogy to the proof of Lemma~\ref{2.4} above, we take \[\rho:=\sup_{|\lambda|\geq 1} \| L(\lambda)\|_{L(\F^{s,\sigma}(\R^n_+),\E^{s,\sigma}(\R^n_+))}.\]
Now, due to Lemma~\ref{2.3}~b), we can choose a $\delta>0$ and a $\lambda_0\ge 1$ such that
\[  \Big\|\binom{- \wt A}{ \wt B}\Big\|_{L(\E^{s,\sigma}(\R^n_+),\F^{s,\sigma}(\R^n_+))}
  < \frac{1}{2\rho}\]
for all $\lambda\in\C$ with $|\lambda|\ge \lambda_0$ provided
\[\max_{|\alpha|=2m}\|a_\alpha\|_{\infty} + \max_{\substack{j=1,\dots,m\\|\beta|=m_j}} \|b_{j\beta}\|_\infty<\delta.\]
Therefore \[\bigg\|L(\lambda)\binom{-\wt A}{\wt B}\bigg\|_{L(\E^{s,\sigma}(\R^n_+))}<\frac{1}{2},\]
which allows us to use the Neumann series just as above, yielding the desired isomorphism. 
\end{proof}

\subsection{General boundary value problems}

The analysis of the general case of variable coefficients is based on the classical method of freezing the coefficients.

In the following, let $(\lambda-A,B)$ be a boundary value problem in $\R^n_+$
of the form \eqref{2-6}--\eqref{2-7} which is parameter-elliptic in $\Lambda$
for all $x\in \overline{\R^n_+}\cup\{\infty\}$. Let $(s,\sigma)\in\R^2$, and assume
the validity of (S1)--(S4).

For every $x_0\in\R^n_+$, we consider the model problem
$ \lambda-A_0(x_0,D)$ with frozen coefficients $a_\alpha(x_0)\in \C$ and
without lower-order terms. By the assumption of parameter-ellipticity,
we can apply Lemma~\ref{2.1} and obtain the existence of the inverse operator
 $$(\lambda-A_0(x_0,D))^{-1}\in L(\F^{s,\sigma}(\R^n), \E^{s,\sigma}(\R^n))$$
for $\lambda \in \Lambda$. In the same way, for $x_0\in\R^{n-1}\cup\{\infty\}$ and $s>\max_j m_j+\frac{1}{p}$, we obtain from
Theorem~\ref{2.2} the existence of the inverse operator
 $$L_{x_0}(\lambda):=\binom{\lambda-A_0(x_0,D)}{B_0(x_0,D)}^{-1}\in L(\F^{s,\sigma}(\R^n_+), \E^{s,\sigma}(\R^n_+)).$$
\begin{lemma}
\label{2.5a}
With the above notation,
 \begin{align*}
  \rho_{A,B} := & \sup_{\substack{x_0\in\R^n_+ \\ \lambda\in\Lambda,\,|\lambda|\ge \lambda_0}}
  \|(\lambda-A_0(x_0,D))^{-1}\|_{L(\F^{s,\sigma}(\R^n), \E^{s,\sigma}(\R^n))} +\\
  & + \sup_{\substack{x_0\in\R^{n-1}\\ \lambda\in\Lambda,\,|\lambda|\ge\lambda_0}}
  \|L_{x_0}(\lambda)\|_{L(\F^{s,\sigma}(\R^n_+), \E^{s,\sigma}(\R^n_+))}
  <\infty
 \end{align*}
for every choice of $\lambda_0>0$.
\end{lemma}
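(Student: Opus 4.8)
The statement asserts that the suprema of the norms of the frozen-coefficient model solution operators, taken over all freezing points $x_0$ (in $\R^n_+$ for the interior operator, in $\R^{n-1}$ for the boundary value problem) and over $\lambda\in\Lambda$ with $|\lambda|\ge\lambda_0$, are finite. The natural approach is a \emph{compactness-plus-uniformity} argument: we cannot apply Lemma~\ref{2.1} or Theorem~\ref{2.2} directly because those give a bound for each \emph{fixed} set of coefficients, not a bound uniform over all admissible coefficient tuples. But the coefficients $a_\alpha(x_0)$, $b_{j\beta}(x_0)$ range over a \emph{compact} set of parameters once we adjoin the limit point $x_0=\infty$ (this is exactly the role of (S2) and (S4), which guarantee that $a_\alpha(x_0)\to a_\alpha(\infty)$ and $b_{j\beta}(x_0)\to b_{j\beta}(\infty)$ as $|x_0|\to\infty$), together with the parameter-ellipticity being uniform in $x_0$ by hypothesis.

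First I would treat the interior term. For each fixed $x_0$, Lemma~\ref{2.1} gives that $\lambda-A_0(x_0,D)\colon\E^{s,\sigma}(\R^n)\to\F^{s,\sigma}(\R^n)$ is invertible for $|\lambda|\ge\lambda_0$, with a bound depending a priori on $x_0$. To get uniformity, I would observe that the inverse is a parameter-dependent Fourier multiplier whose symbol is $(\lambda-A_0(x_0,\xi))^{-1}$ up to the reduction-of-order factors $\spk{\xi,\lambda}^{s-2m}\spk{\xi',\lambda}^\sigma/(\spk{\xi,\lambda}^{s}\spk{\xi',\lambda}^\sigma)$ coming from Lemma~\ref{1.11}~b); by parameter-ellipticity \eqref{2-5} — which holds with a constant $C$ \emph{independent of $x_0$} — one gets Mikhlin-type symbol estimates for this multiplier that are uniform in both $\lambda$ and $x_0$ (the derivatives of $A_0(x_0,\xi)$ in $\xi$ are polynomials in $\xi$ with coefficients $a_\alpha(x_0)$, which are bounded uniformly in $x_0$ by continuity on $\overline{\R^n_+}$ and existence of the limit at infinity). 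Hence the first supremum is finite.

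Next, the boundary value problem term. The cleanest route is a continuity/compactness argument: by Theorem~\ref{2.2}, for each $x_0\in\R^{n-1}\cup\{\infty\}$ the operator $L_{x_0}(\lambda)$ exists and is bounded uniformly in $\lambda$ with $|\lambda|\ge\lambda_0$. I would then argue that the map $x_0\mapsto \sup_{|\lambda|\ge\lambda_0}\|L_{x_0}(\lambda)\|$ is bounded on $\R^{n-1}\cup\{\infty\}$: it suffices to show that $x_0\mapsto L_{x_0}(\lambda)$ is ``continuous'' in an appropriate sense — namely that small perturbations of the coefficients $(a_\alpha(x_0),b_{j\beta}(x_0))$ produce small perturbations of $L_{x_0}(\lambda)$ uniformly in $\lambda$, which is exactly the content of Theorem~\ref{2.5} (small perturbation in $\R^n_+$): writing $\binom{\lambda-A_0(x_1,D)}{B_0(x_1,D)}=\binom{\lambda-A_0(x_0,D)}{B_0(x_0,D)}+\binom{-\wt A}{\wt B}$ with $\wt A,\wt B$ having small constant coefficients when $x_1$ is near $x_0$, a Neumann-series estimate bounds $L_{x_1}(\lambda)$ in terms of $L_{x_0}(\lambda)$, locally uniformly in $x_0$. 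Since $\R^{n-1}\cup\{\infty\}$ is compact (one-point compactification), a finite cover yields a uniform bound, hence the second supremum is finite. Finally the claim holds for every $\lambda_0>0$ because shrinking $\lambda_0$ only enlarges the index set $\{|\lambda|\ge\lambda_0\}$, but increasing $\lambda_0$ is harmless and the key estimates above are valid for all $|\lambda|\ge\lambda_0$ with any fixed choice; one simply fixes $\lambda_0$ at the outset as in Lemma~\ref{2.1}/Theorem~\ref{2.2}.

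**Main obstacle.** The delicate point is the uniformity over \emph{all} freezing points simultaneously, in particular handling $x_0\to\infty$. The compactness of the coefficient range is not automatic; it is precisely assumptions (S2) and (S4) that rescue it, by forcing the coefficients and a controlled number of their (rescaled) derivatives to extend continuously to the point at infinity. Thus the proof must invoke these hypotheses explicitly to pass from ``finite for each $x_0$'' to ``uniformly finite'', and one must be careful that the perturbation estimates from Theorem~\ref{2.5} are genuinely \emph{uniform in $\lambda$} (which they are, since the $\delta$ and $\lambda_0$ there do not depend on $\lambda$) — this is what makes the compactness argument go through cleanly rather than producing a $\lambda$-dependent cover.
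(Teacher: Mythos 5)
Your strategy is sound and captures the same underlying ingredients as the paper's proof --- namely, (S2)/(S4) compactify the set of freezing points, the parameter‑ellipticity constant is uniform in $x_0$, and nearby frozen points are handled by a constant‑coefficient Neumann‑series perturbation. The framing, however, differs. The paper argues by \emph{contradiction}: it extracts a sequence $(x_k,\lambda_k)$ with exploding resolvent norms, passes to a convergent subsequence $x_k\to x^*\in\overline{\R^n_+}\cup\{\infty\}$, writes $\lambda_k-A_0(x_k,D)=(\lambda_k-A_0(x^*,D))\big(1-(\lambda_k-A_0(x^*,D))^{-1}\wt A^k(D)\big)$ with constant‑coefficient $\wt A^k(D)\to 0$, estimates $\|\spk{D,\lambda}^{-2m}\wt A^k(D)\|_{L(L^p)}$ by Mikhlin \emph{uniformly in $\lambda\in\C$}, and concludes via a Neumann series. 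Your proposal replaces this with a direct uniform Mikhlin bound on the full multiplier $\spk{\xi,\lambda}^{2m}(\lambda-A_0(x_0,\xi))^{-1}$ for the interior term, and a finite‑cover compactness argument for the boundary term; both are valid and arguably a little more constructive than the sequence extraction.

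Two points deserve tightening. First, you should not invoke Theorem~\ref{2.5} as a black box for the boundary term: that theorem produces a $\lambda_0\ge1$ (reflecting lower‑order terms of a general variable‑coefficient perturbation), whereas Lemma~\ref{2.5a} requires the bound for \emph{every} $\lambda_0>0$. The correct observation --- which the paper makes implicitly by estimating $\spk{D,\lambda}^{-2m}\wt A^k(D)$ directly --- is that the relevant perturbation $A_0(x_1,D)-A_0(x_0,D)$ has constant coefficients and no lower‑order terms, so the corresponding Mikhlin/Neumann‑series estimate holds uniformly over \emph{all} $\lambda$ and requires no threshold on $|\lambda|$. Second, your closing remark about the $\lambda_0$‑dependence has the logic backwards: shrinking $\lambda_0$ \emph{enlarges} the index set and thus makes the supremum harder to bound, so finiteness for large $\lambda_0$ does not imply it for small $\lambda_0$. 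What actually makes the statement true for every $\lambda_0>0$ is that Lemma~\ref{2.1}, Theorem~\ref{2.2}, and the constant‑coefficient perturbation estimate all hold for arbitrary $\lambda_0>0$, so the entire argument (contradiction or compactness) can be run with any fixed $\lambda_0$ from the outset.
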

\begin{proof}
Let us consider the first supremum with some fixed $\lambda_0>0$.
Assume this supremum to be infinite. Then there exist sequences $(x_k)_{k\in\N}\subset\R^{n}_+$ and $(\lambda_k)_{k\in\N}\subset\Lambda$
with $|\lambda_k|\ge \lambda_0$ such that $\|(\lambda_k-A_0(x_k,D))^{-1}\|\to\infty$ for $k\to\infty$. By passing to a subsequence we may assume that $x_k\to x^*$ for $k\to\infty$ where either $x^*\in \overline{\R^n_+}$ or $x^*=\infty$.
Now write
 $$\lambda_k-A_0(x_k,D)=\lambda_k-A_0(x^*,D)-\wt{A}^k(D),\qquad
   \wt{A}^k(D):=A_0(x_k,D)-A_0(x^*,D).$$
Since $A_0(x^*,D)$ satisfies the assumptions of Lemma~\ref{2.1}, we get
 $$\lambda_k-A_0(x_k,D)=(\lambda_k-A_0(x^*,D))\big[1-(\lambda_k-A_0(x^*,D))^{-1}
   \wt{A}^k(D)\big].$$
Now let
 $$\rho^*=\sup_{\lambda\in\Lambda,\,|\lambda|\ge \lambda_0}
  \|(\lambda-A_0(x^*,D))^{-1}\|_{L(\F^{s,\sigma}(\R^n), \E^{s,\sigma}(\R^n))}$$
which is finite due to Lemma~\ref{2.1}. Moreover, observe that
\begin{align*}
 \|\wt{A}^k(D)u\|_{\F^{s,\sigma}(\R^n)}
 \le & \, \|\spk{D,\lambda}^{s-2m}\spk{D',\lambda}^{\sigma}
 \wt{A}^k(D)\spk{D,\lambda}^{-s}\spk{D',\lambda}^{-\sigma}\|_{L(L^p(\R^n))}
 \times\\
 &\times  \|\spk{D,\lambda}^{s}\spk{D',\lambda}^{\sigma}u\|_{L^p(\R^n)}\\
 =&\|\spk{D,\lambda}^{-2m}\wt{A}^k(D)\|_{L(L^p(\R^n))}
 \|u\|_{\E^{s,\sigma}(\R^n)}.
\end{align*}
It is a straightforward consequence of Mikhlin's Theorem that
 $$\sup_{\lambda\in\C}\|\spk{D,\lambda}^{-2m}\wt{A}^k(D)\|_{L(L^p(\R^n))}
   \xrightarrow{k\to\infty}0,$$
since the (constant) coefficients of $\wt{A}^k(D)$ tend to zero with $k\to\infty$. It follows that
 $$\sup_{\lambda\in\Lambda,\,|\lambda|\ge \lambda_0}
  \|(\lambda-A_0(x^*,D))^{-1}\wt{A}^k(D)\|_{L(\E^{s,\sigma}(\R^n))}
  \le \frac{1}{2}$$
for all sufficiently large $k$. As above, using the Neumann series, we conclude that
\begin{align*}
 \|(\lambda_k-&A_0(x_k,D))^{-1}\|_{L(\mathbb{F}^{s,\sigma}_{\lambda_k}(\R^n), \mathbb{E}^{s,\sigma}_{\lambda_k}(\R^n))}\\
 &\le
 2\|(\lambda_k-A_0(x^*,D))^{-1}\|_{L(\mathbb{F}^{s,\sigma}_{\lambda_k}(\R^n), \mathbb{E}^{s,\sigma}_{\lambda_k}(\R^n))}\le 2\rho^*
\end{align*}
for all sufficiently large $k$. This is a contradiction.


\end{proof}

\begin{remark}\label{rem_ext} In the following, we construct a finite covering of $\R^n_+$ consisting of balls and the complement of a ball centered in the origin. Afterwards, we need to extend the coefficients of the localized problems to $\R^n$, $\R^n_+$, and $\R^{n-1}$, respectively. To this end, we will use a general extension function.
We fix
$\chi\in C^\infty([0,\infty))$ with $0\le \chi\le 1$,   $\chi(z)=1$ for
$0\le z\le 1$ and $\chi(z)=0$ for $z\ge 2$ and define the function $ \chi_U\colon \R^n\to \R^n\cup\{\infty\}$ via
  \begin{align*}
        \chi_{U}(x):= \begin{cases}
        \frac{x}{|x|^{2}} +
   \chi\big(\frac{r'}{|x|}\big)(x-\frac{x}{|x|^{2}}) & \text{if there exists }  r'>0:\, U=\R^n\backslash B(0,r'),\\
        x' + \chi\big(\frac{|x -  x'|}{r'}\big)(x-x')
     & \text{if there exist } r'>0,\; x'\in\R^n: \,U=B(x',r'). \\
        \end{cases}
    \end{align*}
   The function $\chi_U$ coincides with the identity on $U$ and is later compatible with the parameter-ellipticity of the local operators. Since we use reflection techniques for the construction of $\chi_U$, it is crucial that our covering consists of balls and the complement of a ball centered in the origin.
\end{remark}

For the localization, we
first apply Lemma~\ref{2.3} with $\epsilon:= \frac1{2\rho_{A,B}}$,
where $\rho_{A,B}$ is taken from Lemma~\ref{2.5a}. We fix \begin{equation}\label{2-7b}
   \delta_0 := \delta\Big(\frac1{2\rho_{A,B}}, s, \sigma\Big) >0
\end{equation}
as being defined in Lemma~\ref{2.3}. 
Let $x_0:=\infty$ and $U_0:=\{ x\in\R^n: |x|>r_0\}$
where $r_0$ is sufficiently large such that
\begin{align}
\begin{split}
\label{2-8.0}
 \max_{|\alpha|=2m} &
 \| a_\alpha(\cdot)-a_\alpha(x_0)\|_{L^\infty(\wt U_0\cap \R^n_+)}+ \\
 &+\max_{\substack{j=1,\dots,m \\ |\beta|=m_j}}
 \| b_{j\beta}(\cdot) - b_{j\beta}(x_0) \|_{L^\infty(\wt U_0\cap\R^{n-1})}
<\delta_0
\end{split}
\end{align}
with $\wt U_0:=\left\{x\in \R^n: |x|>\frac{r_0}{2}\right\} $
(this is possible due to (S2) and (S4)).
As the coefficients of $A$ and $B$ are continuous
and $\overline{B(0,r_0)}\cap \R^{n-1}$ is compact, there exists a finite covering
$\R^{n-1} \subset \bigcup_{k=0}^{K_0} U_k $ with $U_k:= B(x_k,r_k)$ for $k=1,\dots,K_0$, where
$x_k\in\R^{n-1}$ and $r_k>0$ are chosen such that
\begin{align}
\begin{split}\label{2-8}
 \max_{|\alpha|=2m} &
  \| a_\alpha(\cdot)-a_\alpha(x_k)\|_{L^\infty(\wt U_k\cap \R^n_+)}+\\
&+ \max_{\substack{j=1,\dots,m \\ |\beta|=m_j}} \| b_{j\beta}(\cdot) - b_{j\beta}(x_k) \|_{L^\infty(\wt U_k\cap\R^{n-1})}<\delta_0
\end{split}
\end{align}
with $\wt U_k := B(x_k,2r_k)$ for $k=1,\ldots,K_0$.
We set
\begin{align*}
    \delta_{\max} := \sup\bigg\{\delta>0 \,:\, \R^{n-1}\times [0,\delta]\subset
    \bigcup_{k=0}^{K_{0}}U_{k} \bigg\}.
\end{align*}
Similarly, as
$\R^n_+\setminus\bigcup_{k=0}^{K_0} U_k$
is compact, we can choose
$x_k\in \R^n_+$ and $0<r_k<\frac{\delta_{\max}}{2}$ for $k=K_0+1,\dots,K$ such that $U_k := B(x_k,r_k)\subset\left\{z\in\R^{n}\,|\, z_{n}>\frac{\delta_{\max}}{2}\right\}$,
\begin{equation}
  \label{2-9}
  \max_{|\alpha|=2m} \| a_\alpha(\cdot)-a_\alpha(x_k)\|_{L^\infty(\wt U_k)} <
  \delta_0
\end{equation}
with $\wt U_k:=B(x_k,2r_k)$ for $k=K_0+1,\ldots, K$ and
$\overline{\R^n_+} \subset \bigcup_{k=0}^{K} U_k$.


\begin{remark}[Local operators and extensions]
\label{2.6}
Let $x_0,\dots, x_K$ be chosen as above.
Starting out from the coefficient functions $a_\alpha$ and $b_{j\beta}$ let us define
\begin{align*}
 \acf_\alpha^{k}(x)&:=a_\alpha(\chi_{U_k}(x))\qquad (x\in \overline{\R^n_+}),\\
 \bcf_{j\beta}^{k}(x)&:=b_{j\beta}(\chi_{U_k}(x))\qquad (x\in \R^{n-1})
\end{align*}
for $k=0,\ldots,K_0$ and, for $k=K_0+1,\ldots,K$,
\begin{align*}
 \acf_\alpha^{k}(x):=a_\alpha(\chi_{U_k}(x))\qquad (x\in\R^n).
\end{align*}
Here the function $\chi_{U_k}$ is defined as in Remark~\ref{rem_ext}.
 These new coefficients have the same smoothness as before.
$a^k_\alpha$ coincides with $a_\alpha$ on $U_k\cap \overline{\R^n_+}$
and $U_k\cap\R^n$, respectively,
$\bcf_{j\beta}^{k}$ coincides with $\bcf_{j\beta}^{k}$ on $U_k\cap\R^{n-1}$.
By \eqref{2-8.0}--\eqref{2-9}, we have
  \begin{alignat}{4}
   \big\|   \acf_\alpha^{k}(\cdot) -   \acf_\alpha(  x_k)
  \big\|_{L^\infty(\R^n_+)} & < \delta_0 && \text{ for } k=0,\dots,K_0,\notag\\
   \big\|   \acf_\alpha^{k}(\cdot) -   \acf_\alpha(  x_k)
  \big\|_{L^\infty(\R^n )} & < \delta_0 && \text{ for }
   k=K_0+1,\dots,K,\label{2-9ex}\\
  \big\|   \bcf_{j \beta}^{k}(\cdot) -   \bcf_{j \beta}(  x_k)
  \big\|_{L^\infty(\R^{n-1})} & < \delta_0 && \text{ for } k=0,\dots,K_0. \notag
  \end{alignat}
With the new coefficient functions we associate the operators
$A^k$ and $B^{k}=(  B_1^k, \ldots, $ $ B_m^k)$ via
\begin{align*}
 \A^k=A^k(x,D):= \sum_{|\alpha|\le 2m} a_\alpha^k(x) D^\alpha,\qquad
 \B_j^k=\B_j^k(x,D) & :=\sum_{|\beta|\le m_j}b_{j\beta}^k(x) \gamma_0 D^\beta.
  \end{align*}

We remark that the localization procedure contains a subtlety concerning the constants $\delta$ and $\lambda_0$ in
Lemmas~\ref{2.3}--\ref{2.4} and Theorem~\ref{2.5}. We defined the neighborhoods $U_k$ and the radii $r_k$ in dependence
of $\delta_0$ which depends only on $\rho_{A,B}$, $s$, and $\sigma$, see \eqref{2-7b}. For the new coefficients $a_\alpha^k,\,
b_{j\beta}^k$,
the $\|\cdot\|_\infty$-norm  still satisfies the desired smallness conditions, as seen in \eqref{2-9ex}.
However, as $\chi_{U_k}$
appears in the definition of the new coefficients, the $\BUC^r$-norm and $\BUC^{k_j}$-norm of the new coefficients, respectively, depend on $U_k$ and therefore on
the radius $r_k$.   Here, it is important that  $\delta_0$ does not depend on the $\BUC^r$-norm (in contrast to $\lambda_0$, see Lemma~\ref{2.3}). Due to this, the above modification of the coefficients might lead to a larger constant $\lambda_0$,
but we do not have to redefine the radii $r_k$, which prevents a circular reasoning in the definition of $U_k$.
\end{remark}

\begin{lemma}\label{2.7}
Let $s,\sigma \in\rz$ with $s>\max_j m_j+\frac{1}{p}$, and assume
\textnormal{(S1)--(S4)} to hold.
Then there exists a $\lambda_0>0$ such that the operators
\begin{align*}
  \begin{pmatrix}\lambda-\A^k\\ \B^k\end{pmatrix} & \colon
 \E^{s,\sigma}(\R^n_+)\to \F^{s,\sigma}(\R^n_+)
 \quad (k=0,\dots,K_0),\\
 \lambda-\A^k & \colon
 \E^{s,\sigma}(\R^n )\to \F^{s,\sigma}(\R^n )\quad (k=K_0+1,\dots,K)
\end{align*}
defined in Remark~\ref{2.6}
 are isomorphisms for every $\lambda\in\Lambda$ with $|\lambda|\ge \lambda_0$.
 We denote the inverse operators by $\Lhs _k(\lambda)$ for $k=0,\dots,K$.
\end{lemma}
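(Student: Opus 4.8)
The plan is to realize each of the $K+1$ localized operators as a small, lower-order-free-model perturbation of the constant-coefficient operator obtained by freezing the coefficients at the base point $x_k$, and to run the Neumann-series argument of Lemma~\ref{2.4} resp.\ Theorem~\ref{2.5} once for each patch; since the covering $\{U_0,\dots,U_K\}$ is finite, the threshold $\lambda_0$ asserted in the lemma is obtained as the maximum of the $K+1$ thresholds so produced.

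First I would identify the frozen model. Because the reflection map $\chi_{U_k}$ of Remark~\ref{rem_ext} restricts to the identity on $U_k$ and $x_k\in U_k$ --- with the obvious limiting reading at $k=0$, where $\chi_{U_0}(x)\to x$, hence $a^0_\alpha(x)\to a_\alpha(\infty)=a_\alpha(x_0)$, as $|x|\to\infty$ --- the coefficients of $A^k,B^k$ frozen at $x_k$ are exactly $a_\alpha(x_k)$ and $b_{j\beta}(x_k)$. Thus the model operator with frozen coefficients and no lower-order terms is $\lambda-A_0(x_k,D)$ for $k>K_0$ and $\binom{\lambda-A_0(x_k,D)}{B_0(x_k,D)}$ for $k\le K_0$; by the standing assumption that $(\lambda-A,B)$ is parameter-elliptic in $\Lambda$ at every point of $\overline{\R^n_+}\cup\{\infty\}$, in particular at $x_k$ (which for $k\le K_0$ lies on $\R^{n-1}$, so the associated coordinate system requires no rotation), this model satisfies the hypotheses of Lemma~\ref{2.1} resp.\ Theorem~\ref{2.2}, and its inverse $(\lambda-A_0(x_k,D))^{-1}$ resp.\ $L_{x_k}(\lambda)$ exists for $\lambda\in\Lambda$, $|\lambda|\ge\lambda_0$, with operator norm at most $\rho_{A,B}$ by Lemma~\ref{2.5a}. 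Next I would split $A^k=A_0(x_k,D)+\widetilde A^k$ and $B^k=B_0(x_k,D)+\widetilde B^k$, where $\widetilde A^k$ carries top-order coefficients $a^k_\alpha(\cdot)-a_\alpha(x_k)$ (for $|\alpha|=2m$) and lower-order coefficients $a^k_\alpha$ (for $|\alpha|<2m$), and similarly for $\widetilde B^k$; by Remark~\ref{2.6} these retain the $\BUC$-regularity demanded in (S1), (S3), so Lemma~\ref{2.3} applies to $(\widetilde A^k,\widetilde B^k)$ with the same $s,\sigma$.

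The perturbation is small in sup-norm: because the range of $\chi_{U_k}$ lies in $\overline{\widetilde U_k}$ intersected with the relevant half-space or hyperplane, the covering conditions \eqref{2-8.0}--\eqref{2-9} transfer to
\[
 \max_{|\alpha|=2m}\big\|a^k_\alpha(\cdot)-a_\alpha(x_k)\big\|_\infty
 +\max_{\substack{j=1,\dots,m\\|\beta|=m_j}}\big\|b^k_{j\beta}(\cdot)-b_{j\beta}(x_k)\big\|_\infty<\delta_0 ,
\]
with $\delta_0$ the quantity fixed in \eqref{2-7b} as the value $\delta\big(\tfrac1{2\rho_{A,B}},s,\sigma\big)$ furnished by Lemma~\ref{2.3}. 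Applying Lemma~\ref{2.3}~b) to $(\widetilde A^k,\widetilde B^k)$ with $\eps=\tfrac1{2\rho_{A,B}}$ (resp.\ Lemma~\ref{2.3}~a) to $\widetilde A^k$ when $k>K_0$) then produces --- with the \emph{same} $\delta_0$, since the $\delta$ in Lemma~\ref{2.3} depends only on $\eps,s,\sigma$ --- a threshold $\lambda_0^{(k)}>0$, depending only on the $\BUC^r$- and $\BUC^{k_j}$-norms of the localized coefficients, beyond which $\big\|\binom{-\widetilde A^k}{\widetilde B^k}\big\|_{L(\E^{s,\sigma}(\R^n_+),\F^{s,\sigma}(\R^n_+))}<\tfrac1{2\rho_{A,B}}$. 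Hence $\big\|L_{x_k}(\lambda)\binom{-\widetilde A^k}{\widetilde B^k}\big\|\le\tfrac12$ for $|\lambda|\ge\lambda_0^{(k)}$, so $I+L_{x_k}(\lambda)\binom{-\widetilde A^k}{\widetilde B^k}$ is invertible by a Neumann series, and from
\[
 \binom{\lambda-A^k}{B^k}=\binom{\lambda-A_0(x_k,D)}{B_0(x_k,D)}\Big(I+L_{x_k}(\lambda)\binom{-\widetilde A^k}{\widetilde B^k}\Big)
\]
one reads off that $\binom{\lambda-A^k}{B^k}\colon\E^{s,\sigma}(\R^n_+)\to\F^{s,\sigma}(\R^n_+)$ is an isomorphism for $\lambda\in\Lambda$, $|\lambda|\ge\lambda_0^{(k)}$; the interior patches $k=K_0+1,\dots,K$ are handled verbatim with $\lambda-A_0(x_k,D)$ in place of the boundary model. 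Taking $\lambda_0:=\max_{0\le k\le K}\lambda_0^{(k)}$ --- a maximum over finitely many indices --- and calling the inverses $L_k(\lambda)$ finishes the argument.

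I expect the main (and rather modest) obstacle to be organizational rather than analytic: one must respect the order of quantifiers flagged in Remark~\ref{2.6}. The constant $\delta_0$, which governed the choice of the covering radii $r_k$, was fixed in \eqref{2-7b} before and independently of the $\BUC$-norms of the reflected coefficients $a^k_\alpha,b^k_{j\beta}$; only those norms --- and only through $\lambda_0^{(k)}$ --- are affected by the reflection $\chi_{U_k}$. Thus passing to the localized problems merely enlarges the spectral threshold, which is harmless after the finite maximum over $k$, and never forces a redefinition of the $U_k$, so no circularity arises. Everything else reduces to the perturbation lemmas already established.
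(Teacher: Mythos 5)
Your proposal is correct and follows essentially the same approach as the paper: both split each localized operator into a constant‑coefficient model $\binom{\lambda-A_0(x_k,D)}{B_0(x_k,D)}$ (resp.\ $\lambda-A_0(x_k,D)$) plus a remainder that is small in sup‑norm thanks to \eqref{2-8.0}--\eqref{2-9} and the construction of $\chi_{U_k}$, then invoke parameter‑ellipticity at $x_k$ and the Neumann‑series perturbation argument. The paper's proof is one line shorter because it simply cites Lemma~\ref{2.4} and Theorem~\ref{2.5} as applicable, whereas you re‑derive that argument in place; your explicit use of Lemma~\ref{2.5a} to obtain a uniform $\rho_{A,B}$ and of \eqref{2-7b} to fix $\delta_0$ independently of the $\BUC^r$‑norms correctly reproduces the quantifier bookkeeping flagged in Remark~\ref{2.6}, and taking $\lambda_0:=\max_k\lambda_0^{(k)}$ over the finite cover is exactly how the paper produces a single threshold.
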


\begin{proof}
We split the operators into a part with constant coefficients and a
perturbation, i.e., $A^k=A^k_0+\wt A^k$ with
\[A_0^k=\sum_{|\alpha|= 2m}a_\alpha(x_k) D^\alpha, \qquad
  \wt A^k=\sum_{|\alpha|= 2m}
    \big(a_\alpha^k(\cdot)-a_\alpha(x_k)\big ) D^\alpha+\sum_{|\alpha|< 2m}
    a_\alpha^k(\cdot) D^\alpha. \]
The $B^k$ can be decomposed in a similar way. Due to the smallness property
\eqref{2-9ex}, the considered operators thus fit into the setting of
Lemma~\ref{2.4} and Theorem~\ref{2.5}, respectively.
This yields the assertion.
\end{proof}

In the following, we will fix a smooth partition of unity
$\varphi_k\in C^\infty(\R^n)$, $k=0,\dots,K$, with
   $\supp\varphi_k\subset U_k$,  $0\le\varphi_k\le 1$, and
   $\sum_{k=0}^K \varphi_k = 1$ on $\overline{\R^n_+}$.
In addition, we fix functions $\psi_k\in C^\infty(\R^n)$ with
$0\le \psi_k\le 1$,  $\supp\psi_k\subset U_k$ and $\psi_k=1$ on
$\supp\varphi_k$.
We can solve  \eqref{2-1}  locally in $U_k$, using the extended local operators in the half-space
and in the whole space and their inverses $\Lhs _k(\lambda)$.
However, the solution operators $\Lhs _k(\lambda)$ are not local, so we have to multiply the
half-space solution
by~$\psi_k$. In this way, commutators appear, which are estimated in the following
lemma. We write $[\cdot,\cdot]$ for the standard commutator and use the notation
$ \psi_k$ also for the operator of multiplication by $ \psi_k$.
For the boundary operators, the commutator $[\B^k,\psi_k]$ is defined as
\[[\B^k,\psi_k] u = \B^k ( \psi_k   u)
- (\gamma_0 \psi_k) \B^k    u.\]

\begin{lemma}
  \label{2.8}
Let $s,\sigma \in\rz$ with $s>\max_j m_j+\frac{1}{p}$, and assume
\textnormal{(S1)--(S4)} to hold.
Let $R_0(\lambda)$ be defined on $\F^{s,\sigma}(\R^n_+)$ by
\begin{equation}\label{2-10}
 R_0(\lambda)\binom f g := \sum_{k=0}^{K_0}  \psi_k
 L_k(\lambda)\binom{ \varphi_k f}{(\gamma_0 \varphi_k) g}
  + \sum_{k=K_0+1}^{K}  \psi_k \Lhs _k(\lambda)( \varphi_k f).
\end{equation}
Then
  \begin{equation}
    \label{2-11}
    \binom {\lambda -A}{B}R_0(\lambda) = 1+C(\lambda)
  \end{equation}
  where $C(\lambda)\in L(\F^{s,\sigma}(\R^n_+),
  \F^{ s+1,\sigma } (\R^n_+))$, and there exists a $\lambda_0>0$ such that  $1+C(\lambda)\in L(\F^{s,\sigma}(\R^n_+))$ is invertible for all $\lambda \in \Lambda$ with $|\lambda|\ge\lambda_0$.

\end{lemma}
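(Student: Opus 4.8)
The proof is a standard parametrix argument: we compute the effect of applying $\binom{\lambda-A}{B}$ to the candidate operator $R_0(\lambda)$ built from the local inverses $L_k(\lambda)$, and show that the commutator terms that obstruct $R_0(\lambda)$ from being a genuine right inverse are of lower order in the $x_n$-direction, hence smoothing by one derivative, and in addition come with a decaying factor $\spk\lambda^{-1/2m}$ (or similar) so that they can be absorbed by a Neumann series once $|\lambda|$ is large. First I would fix $\lambda_0$ at least as large as the one provided by Lemma~\ref{2.7}, so that all $L_k(\lambda)$ exist and are uniformly bounded for $|\lambda|\ge\lambda_0$. Then, since $\psi_k\equiv 1$ on $\supp\varphi_k$ and the coefficients $a_\alpha^k,b_{j\beta}^k$ agree with $a_\alpha,b_{j\beta}$ on $U_k\supset\supp\psi_k$, one has $\varphi_k(\lambda-A)=\varphi_k(\lambda-A^k)$ and $\varphi_kB=\varphi_kB^k$ as operators applied to functions supported where $\psi_k=1$; using $\sum_k\varphi_k=1$ on $\overline{\R^n_+}$ one writes
\begin{equation*}
\binom{\lambda-A}{B}R_0(\lambda)\binom fg
=\binom fg+\sum_{k=0}^{K_0}\binom{[\lambda-A^k,\psi_k]}{[B^k,\psi_k]}L_k(\lambda)\binom{\varphi_k f}{(\gamma_0\varphi_k)g}
+\sum_{k=K_0+1}^{K}[\lambda-A^k,\psi_k]L_k(\lambda)(\varphi_k f),
\end{equation*}
because $\binom{\lambda-A^k}{B^k}L_k(\lambda)$ reproduces $\binom{\varphi_k f}{(\gamma_0\varphi_k)g}$ exactly. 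Thus $C(\lambda)$ is precisely the sum of commutator terms on the right.

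\textbf{The commutator estimate.} The key point is that $[\lambda-A^k,\psi_k]=-[A^k,\psi_k]$ is a differential operator of order $2m-1$ with coefficients built from the derivatives of $\psi_k$ (which are $C^\infty_c$) and the $a_\alpha^k$ (which satisfy the smoothness assumptions (S1)), and similarly $[B^k,\psi_k]$ has order $m_j-1$. Hence, by Proposition~\ref{1.4}~f) together with the multiplier estimates of Lemma~\ref{1.12} (applied to $\partial^\gamma\psi_k$ and the coefficients), one gets
\begin{equation*}
\big\|[A^k,\psi_k]v\big\|_{H_{p,\lambda}^{s-2m,\sigma}(\R^n_+)}\le C\,\|v\|_{H_{p,\lambda}^{s-1,\sigma}(\R^n_+)}\le C\spk\lambda^{-1/2m}\|v\|_{\E^{s,\sigma}(\R^n_+)},
\end{equation*}
exactly as in the last display of the proof of Lemma~\ref{2.3}~a) (where $\spk\lambda\spk{\xi,\lambda}^{-1}$ is a uniform Mikhlin multiplier and $\spk\lambda\spk{\xi,\lambda}^{-2m}\le\spk\lambda^{1-1/2m}\cdot(\text{bounded})$ when one distributes one full power; more precisely one only needs $\spk{D,\lambda}^{s-1}=\spk\lambda^{-\varepsilon}\cdot(\text{Mikhlin})\cdot\spk{D,\lambda}^{s-1+\varepsilon}$ for a small $\varepsilon>0$ with $s-1+\varepsilon\le s$). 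The analogous bound for $[B^k,\psi_k]$ uses Lemma~\ref{1.12}~e) and the continuity of the trace from Definition~\ref{1.7}. Composing with the uniformly bounded $L_k(\lambda)$ and the bounded multiplications by $\varphi_k,\gamma_0\varphi_k$ shows, first, that $C(\lambda)$ maps $\F^{s,\sigma}(\R^n_+)$ \emph{into} $\F^{s+1,\sigma}(\R^n_+)$ — note the commutator gains one $x_n$-derivative over what is merely needed, which is where the index $s+1$ comes from — and second, that $\|C(\lambda)\|_{L(\F^{s,\sigma}(\R^n_+))}\le C\spk\lambda^{-1/2m}$.

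\textbf{Invertibility.} Choosing $\lambda_0$ possibly larger so that $C\spk\lambda^{-1/2m}<\tfrac12$ for $|\lambda|\ge\lambda_0$, the Neumann series gives $(1+C(\lambda))^{-1}\in L(\F^{s,\sigma}(\R^n_+))$ with norm $<2$, uniformly in such $\lambda$. This is the claimed statement. \emph{The main obstacle} is the bookkeeping in the commutator estimate: one must check carefully that $[A^k,\psi_k]$ really only involves coefficients covered by (S1) (the top-order part of the commutator involves $\partial_j\psi_k$ times $a_\alpha$ with $|\alpha|=2m$, which is fine since $a_\alpha\in\BUC^r$, and lower-order contributions only need the multiplier property of Lemma~\ref{1.12}~b)), and that the power of $\spk\lambda$ one extracts is strictly negative — this forces one to use the order gain $2m-1<2m$ on the $\R^n$-pieces and $m_j-1<m_j$ on the boundary pieces rather than just order $\le 2m$, $\le m_j$. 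One small subtlety for $k=0$: there $\psi_0$ is not compactly supported but equals $1$ near infinity, so $\partial^\gamma\psi_0$ is compactly supported for $|\gamma|\ge1$ and the commutator is still a genuine lower-order operator with $\BUC$-bounded coefficients, so the same estimate applies.
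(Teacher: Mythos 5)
Your argument is correct and follows essentially the same route as the paper: use the partition of unity to decompose $\binom{\lambda-A}{B}R_0(\lambda)$, exploit that $A=A^k$, $B=B^k$ on $\supp\psi_k\subset U_k$, show that the remainder $C(\lambda)$ is a sum of commutator terms $\binom{-[\A^k,\psi_k]}{[\B^k,\psi_k]}L_k(\lambda)$ which drop one order because $[\A^k,\psi_k]$ has order $\le 2m-1$ and $[\B^k_j,\psi_k]$ has order $\le m_j-1$, deduce $C(\lambda)\in L(\F^{s,\sigma}(\R^n_+),\F^{s+1,\sigma}(\R^n_+))$ uniformly, convert the extra order into a decaying factor of $\spk\lambda$ via the uniform Mikhlin estimate, and close with a Neumann series. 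Your observation about $\psi_0$ near infinity is also in order, and your sign convention $[\lambda-A^k,\psi_k]=-[A^k,\psi_k]$ matches the paper's.

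One small inaccuracy worth flagging: the decay factor you get from $\|u\|_{H^{s-1,\sigma}_{p,\lambda}}\le C\spk\lambda^{-1}\|u\|_{H^{s,\sigma}_{p,\lambda}}$ (since $\spk\lambda\spk{\xi,\lambda}^{-1}$ is a uniform Mikhlin multiplier, exactly as in the proof of Lemma~\ref{2.3}~a)) is $\spk\lambda^{-1}$, not $\spk\lambda^{-1/2m}$; with the paper's convention $\spk\lambda=(1+|\lambda|^{1/m})^{1/2}$ this already behaves like $|\lambda|^{-1/(2m)}$ for large $|\lambda|$. Your parenthetical estimate $\spk\lambda\spk{\xi,\lambda}^{-2m}\le\spk\lambda^{1-1/2m}\cdot(\text{bounded})$ is likewise off (the correct elementary bound would be $\le\spk\lambda^{1-2m}$), but neither slip affects the argument, because the only thing used is that the factor tends to zero as $|\lambda|\to\infty$.
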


\begin{proof}
As first step of the proof we show the commutator estimates
  \begin{align*}
  C_k(\lambda)&:=\binom{
    -[\A^k,\psi_k]}{[\B^k,\psi_k]}\Lhs _k(\lambda)
    \in L(\F^{s,\sigma}(\R^n_+),\F^{ s+1,\sigma}(\R^n_+))
     \qquad (k=0,\dots,K_0),\\[1ex]
    C_k(\lambda)&:=-[\A^k,\psi_k]\Lhs _k(\lambda)
    \in L(\F^{s,\sigma}(\R^n ),\F^{ s+1,\sigma}(\R^n ))\qquad (k=K_0+1,\dots, K).
  \end{align*}
We shall only consider the case $k=0,\dots,K_0$, since the proof for $k=K_0+1,\dots,K$ is analogous (and simpler).

The operator $[\A^k,\psi_k]$ is a differential operator of order not greater than $2m-1$. Therefore, it is a bounded operator
\[ [\A^k,\psi_k]\colon
\E^{s,\sigma}(\R^n_+)=H_{p,\lambda}^{s,\sigma}(\R^n_+)\to
H_{p,\lambda}^{s-2m+1,\sigma}(\R^n_+).\]
For the boundary operators, we have for $j=1,\dots,m$
\begin{align*}
  \B_j^k(\psi_k  u) & =\sum_{|\beta|\le m_j}
 b^k_{j\beta} \gamma_0 D^\beta(\psi_k u) \\
 & = \sum_{|\beta|\le m_j}
 b^k_{j\beta} \gamma_0 \Big(  \psi_k D^\beta  u +
 \sum_{\gamma\le \beta, \gamma \neq \beta}
  c_{j,k,\beta,\gamma}(\cdot) D^\gamma  u \Big) \\
 & = (\gamma_0  \psi_k) B_j^k u + \sum_{\beta,\gamma}
b^k_{j\beta}(\gamma_0  c_{j,k,\beta,\gamma})
 \gamma_0 D^\gamma u,
\end{align*}
where the coefficients $c_{j,k,\beta,\gamma}$ depend on $ \psi_k$. 
Consequently,
the operator $[\B^k_j,\psi_k]$ is a boundary operator of order
not greater than $m_j-1$. In the case $m_j=0$, this operator is zero.
Therefore, $[\B_j^k,\psi_k]$ is
 continuous as an operator
\[  [\B_j^k,\psi_k]\colon \E^{s,\sigma}(\R^n_+)\to
 B_{pp,\lambda}^{s+\sigma-m_j+1-1/p}(\R^{n-1}).\]
Hence the commutator estimates are true, since
$\Lhs _k(\lambda)\in L(\F^{s,\sigma}(\R^n_+), \E^{s,\sigma}(\R^n_+))$ by Lemma~\ref{2.7}.

Now let $v:= R_0(\lambda)(f,g)$. We write
\[  \binom{\lambda -A}{B} v   = \sum_{k=1}^{K_0}
  \binom{\lambda -A}{B}  \psi_k
  L_k(\lambda)\binom{\varphi_k f}{(\gamma_0\varphi_k) g}
  + \sum_{k=K_0+1}^{K} \binom{\lambda -A}{B}  \psi_k
  \Lhs _k(\lambda)(\varphi_k f)\]
and treat each term separately. For $k=1,\dots,K_0$, we obtain
\begin{align*}
\binom{\lambda -A}{B} &   \psi_k    \Lhs _k(\lambda)
  \binom{ \phi_k  f}{
  (\gamma_0 \phi_k ) g}  =  \binom{\lambda -\A^k}{\B^k}
  \psi_k \Lhs _k(\lambda)
  \binom{ \phi_k  f}{
  (\gamma_0 \phi_k ) g} \\
  & =  \Bigg[ \binom{\psi_k(\lambda -\A^k)}{(\gamma_0 \psi_k)\B^k}
  \Lhs _k(\lambda) + \binom{
   -[\A^k,\psi_k]}{[\B^k,\psi_k]}
    \Lhs _k(\lambda) \Bigg]
    \binom{ \phi_k  f}{
  (\gamma_0 \phi_k ) g} \\
  & =  \binom{\psi_k \varphi_k f}{(\gamma_0 \psi_k)(\gamma_0\varphi_k) g} +   C_k(\lambda)
  \binom{\varphi_k f}{(\gamma_0\varphi_k) g}\\
& = \binom{\varphi_k f}{(\gamma_0\varphi_k) g} +   C_k (\lambda)
  \binom{\varphi_k f}{(\gamma_0\varphi_k) g}.
\end{align*}
For $k=K_0+1,\dots,K$, we obtain in the same way
\[ \binom {\lambda -A}{B} \psi_k \Lhs _k(\lambda)(\varphi_k f) =
\binom{\varphi_k f + C_k(\lambda)
 ( \varphi_k f)}{0}.\]
Summing up over $k$ yields
\[\binom {\lambda -A}{B} v = (1+C(\lambda))\binom f g\]
with
\[ C(\lambda)\binom f g := \sum_{k=0}^{K_0}  C_k(\lambda)
  \binom{\varphi_k f}{(\gamma_0\varphi_k) g} + \sum_{k=K_0+1}^K
   \binom{C_k(\lambda)
  (\varphi_k f)}{0}.\]
  Note that for sake of readability we have dropped the extensions and restrictions from our notation, here. More precisely, the  upper entry in the last term above would be $r_{\R^n_+} C_k(\lambda)
  e^0_{\R^n_+}\varphi_k f$.

From the above commutator estimates and the fact that multiplication by $\varphi_k$  preserves the smoothness, we obtain $C(\lambda)\in L(\F^{s,\sigma}(\R^n_+),\F^{s+1,\sigma}  (\R^n_+))$.

Proceeding as in the proof of Lemma~\ref{2.3}~a) for the lower order terms and using the Neumann series as in Lemma~\ref{2.4}, we obtain that for sufficiently large $\lambda$, the
  operator $1+C(\lambda) \in
  L(\F^{s,\sigma}(\R^n_+))$ is invertible, and the norm of the inverse is not
  greater than $2$.
\end{proof}

The last result provides a solution operator for the boundary value problem~\eqref{2-1}. To show uniqueness, the following observation will be useful.

\begin{lemma}
  \label{2.9}
  Let $E,F$ be Banach spaces, and let $T\in L(E,F)$ be a retraction, i.e., there
  exists $R\in L(F,E)$ with $TR = \id_F$. Let $E_0$ be a dense subset of $E$. 
  If $T|_{E_0}
  \colon E_0\to F$ is injective, then $T$ is injective.
\end{lemma}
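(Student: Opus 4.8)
The plan is to turn the retraction into a bounded projection and reduce the statement to triviality of $\ker T$. Since $TR=\id_F$, the operator $RT\in L(E)$ is idempotent, $(RT)^2=R(TR)T=RT$, so $P:=\id-RT\in L(E)$ is a bounded projection. Because $TP=T-TRT=T-T=0$ we have $P(E)\subseteq\ker T$, and because $Pv=v-RTv=v$ for $v\in\ker T$ we conclude that $P$ is exactly the projection of $E$ onto $\ker T$ (along $R(F)$). Hence $T$ is injective if and only if $P=0$, i.e. $RT=\id_E$; and conversely, once $RT=\id_E$ is known, $Tu=0$ immediately gives $u=RTu=0$. So it suffices to prove $\ker T=\{0\}$.

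To do this I would fix $u\in\ker T$, use density of $E_0$ to choose $u_k\in E_0$ with $u_k\to u$, and look at $Pu_k=u_k-RTu_k$. By continuity of $P$ we get $Pu_k\to Pu=u$, and by construction $T(Pu_k)=0$ for every $k$. If one knows in addition that each $Pu_k$ again belongs to $E_0$, then $Pu_k\in E_0$ with $T(Pu_k)=0$, so injectivity of $T|_{E_0}$ forces $Pu_k=0$ for all $k$, whence $u=\lim_k Pu_k=0$ and $T$ is injective. Equivalently: for $u\in E_0$ one has $T(RTu)=Tu$ by $TR=\id_F$, so $RTu$ and $u$ are two elements of $E_0$ with the same image under $T$, hence $RTu=u$; thus $RT=\id$ on the dense set $E_0$ and therefore, by boundedness, on all of $E$.

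I expect the delicate point to be precisely the step ``$Pu_k\in E_0$'' (equivalently ``$RTu\in E_0$ for $u\in E_0$''): for a genuinely arbitrary dense subset this need not hold, so one has to use that $E_0$ is stable under $\id-RT$, i.e. that $R$ maps $T(E_0)$ back into $E_0$. In the intended application this is unproblematic: there $E_0$ is a space of smooth functions, $T$ is the differential boundary operator $(\lambda-A,B)$, and $RT$ is assembled from the differential operators and from the solution operators $L_k(\lambda)$ of Lemma~\ref{2.7}, all of which preserve smoothness, so $P$ sends $E_0$ into $E_0\cap\ker T=\{0\}$ and the density argument above closes.
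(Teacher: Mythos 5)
Your argument is, up to unpacking the projection $P=\id-RT$, the same density-plus-retraction argument the paper gives, and your instinct that the step ``$Pu_k\in E_0$'' (equivalently $RTu\in E_0$ for $u\in E_0$) is the delicate point is exactly right: as literally stated the lemma is false, and both your argument and the paper's rely on the unstated hypothesis that $E_0$ is $RT$-invariant. The paper hides the same gap in the sentence ``As $T|_{E_0}$ is injective, we have $u_n=Rf_n$'': from $Tu_n=f_n=T(Rf_n)$ one may invoke injectivity of $T$ on $E_0$ only if $Rf_n=RTu_n$ is already known to lie in $E_0$, which the stated hypotheses do not guarantee.

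Concretely, take $E=\R^2$, $F=\R$, $T(x,y)=x$, $R(x)=(x,0)$, and let $E_0=\{(x,\phi(x)):x\in\R\}$ be the graph of a discontinuous additive function $\phi\colon\R\to\R$. Then $E_0$ is dense in $E$ (a standard fact about such graphs), $T|_{E_0}$ is injective because $E_0$ is a graph, and $TR=\id_F$, yet $\ker T=\{0\}\times\R\neq\{0\}$. One sees exactly where the argument breaks: $RT(x,\phi(x))=(x,0)\notin E_0$ whenever $\phi(x)\neq 0$. The lemma therefore needs the additional hypothesis $RT(E_0)\subseteq E_0$ (which, given injectivity of $T|_{E_0}$, is equivalent to $RT=\id$ on $E_0$), at which point the density argument closes immediately; this hypothesis then has to be verified in the application to Theorem~\ref{2.10}. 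Your explicit flagging of the hidden requirement, and your sketch of why it should hold for the smooth test functions and solution operators of Section~\ref{sec3}, is in this respect more careful than the paper's own proof.
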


\begin{proof}
Let $f\in F$ and $u\in E$ with $Tu=f$. Choose a sequence $(u_n)_{n\in\N}\subset E_0$
  with $u_n\to u\;(n\to\infty)$ in $E$. As $T|_{E_0}$ is injective, we have
  $u_n = Rf_n$, where $f_n:= Tu_n$. With the continuity of $T$, we see
  $f_n = Tu_n \to Tu = f$ in $F$, and from the continuity of $R$ we get
  $u_n = Rf_n \to Rf$ in $E$. As the limit is unique, this yields $u=Rf$, which shows
  the injectivity of $T$.
\end{proof}

The following theorem is the key result of this section.

\begin{theorem}
  \label{2.10}
Let $p \in (1, \infty)$ and $s,\sigma \in\rz$ with $s>\max_j m_j+\frac{1}{p}$. Let $(\lambda-A,B)$ be a boundary value problem in $\R^n_+$
of the form \eqref{2-6}--\eqref{2-7} which is parameter-elliptic in $\Lambda$
for all $x\in\overline{\R^n_+}\cup\{\infty\}$, and assume
\textnormal{(S1)--(S4)} to hold. Then, there exists a $\lambda_0>0$ such that for
every $\lambda\in\Lambda$ with $|\lambda|\ge \lambda_0$, the operator
\begin{equation}\label{2-12}
  \begin{pmatrix}\lambda-  A \\   B \end{pmatrix}   \colon
 \E^{s,\sigma}(\R^n_+)\to \F^{s,\sigma}(\R^n_+)
\end{equation}
is an isomorphism. Its inverse is given by
  \[  R(\lambda)= R_0(\lambda)(1+C(\lambda))^{-1}\in L(
  \F^{s,\sigma}(\R^n_+),\E^{s,\sigma}(\R^n_+)),\]
where $R_0(\lambda)$ and $C(\lambda)$ are defined in Lemma~\ref{2.8}.
\end{theorem}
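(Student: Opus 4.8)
The plan is to read off surjectivity from the right parametrix of Lemma~\ref{2.8}, to build a matching \emph{left} parametrix by the same localization, and to glue the two together with Lemma~\ref{2.9}. Set $T:=\binom{\lambda-A}{B}$ and fix $|\lambda|\ge\lambda_0$ with $\lambda_0$ from Lemma~\ref{2.8} (to be enlarged finitely often). Since $1+C(\lambda)$ is invertible in $L(\F^{s,\sigma}(\R^n_+))$, the operator $R(\lambda):=R_0(\lambda)(1+C(\lambda))^{-1}$ is a bounded right inverse of $T$, so $T$ is a retraction from $\E^{s,\sigma}(\R^n_+)$ onto $\F^{s,\sigma}(\R^n_+)$; as $R_0(\lambda)$ (built from the uniformly bounded $\Lhs_k(\lambda)$ and the cut-off multipliers) and $(1+C(\lambda))^{-1}$ are uniformly bounded in $\lambda$, so is $R(\lambda)$.

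For injectivity I would mirror the construction of $R_0(\lambda)$ with the roles of $\varphi_k$ and $\psi_k$ interchanged, setting $S_0(\lambda)\binom{f}{g}:=\sum_{k=0}^{K_0}\varphi_k\Lhs_k(\lambda)\binom{\psi_k f}{(\gamma_0\psi_k)g}+\sum_{k=K_0+1}^{K}\varphi_k\Lhs_k(\lambda)(\psi_k f)$, with the extension by zero of $\psi_k f$ to $\R^n$ and the subsequent restriction understood exactly as in Lemma~\ref{2.8}; then $S_0(\lambda)\in L(\F^{s,\sigma}(\R^n_+),\E^{s,\sigma}(\R^n_+))$ uniformly in $\lambda$. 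Because the coefficients of $A,B$ agree with those of $\A^k,\B^k$ on $U_k\supset\supp\psi_k$, for $u\in\mathscr S(\R^n_+)$ one has the algebraic identities $\psi_k(\lambda-A)u=(\lambda-\A^k)(\psi_k u)+[\A^k,\psi_k]u$ and $(\gamma_0\psi_k)Bu=\B^k(\psi_k u)-[\B^k,\psi_k]u$; inserting these, using $\Lhs_k(\lambda)\binom{\lambda-\A^k}{\B^k}=\id$, $\psi_k\equiv1$ on $\supp\varphi_k$ and $\sum_k\varphi_k\equiv1$ on $\overline{\R^n_+}$, gives $S_0(\lambda)Tu=u+D(\lambda)u$ with $D(\lambda)=\sum_{k=0}^{K_0}\varphi_k\Lhs_k(\lambda)\binom{[\A^k,\psi_k]}{-[\B^k,\psi_k]}+\sum_{k=K_0+1}^{K}\varphi_k\Lhs_k(\lambda)[\A^k,\psi_k]$.

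The remaining point is that $D(\lambda)$ is small for large $|\lambda|$. As observed in the proof of Lemma~\ref{2.8}, $[\A^k,\psi_k]$ is a differential operator of order $\le 2m-1$ and $[\B_j^k,\psi_k]$ a boundary operator of order $\le m_j-1$, so $\binom{[\A^k,\psi_k]}{-[\B^k,\psi_k]}$ maps $\E^{s,\sigma}(\R^n_+)$ boundedly and uniformly into $\F^{s+1,\sigma}(\R^n_+)$; composing with the embedding $\F^{s+1,\sigma}(\R^n_+)\hookrightarrow\F^{s,\sigma}(\R^n_+)$, whose norm is $O(\spk{\lambda}^{-1})$ (the Mikhlin bound for $\spk{\lambda}\spk{\xi,\lambda}^{-1}$ used in the proof of Lemma~\ref{2.3}~a) together with its Besov analogue), and with the uniformly bounded $\Lhs_k(\lambda)$ and $\varphi_k$, yields $\|D(\lambda)\|_{L(\E^{s,\sigma}(\R^n_+))}\to0$ as $|\lambda|\to\infty$. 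After enlarging $\lambda_0$ so that $\|D(\lambda)\|<1$, the operator $1+D(\lambda)$ is invertible on $\E^{s,\sigma}(\R^n_+)$, so $S_0(\lambda)Tu=(1+D(\lambda))u$ shows $T$ is injective on $\mathscr S(\R^n_+)$; since this space is dense in $\E^{s,\sigma}(\R^n_+)$ and $T$ is a retraction with co-retraction $R(\lambda)$, Lemma~\ref{2.9} gives injectivity of $T$. A continuous bijection between Banach spaces being an isomorphism, $T$ is an isomorphism, and $TR(\lambda)=\id$ forces $T^{-1}=R(\lambda)=R_0(\lambda)(1+C(\lambda))^{-1}$, uniformly bounded for $|\lambda|\ge\lambda_0$.

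The step I expect to be the main obstacle is the small-remainder estimate for $D(\lambda)$: one must verify that interchanging $\varphi_k$ and $\psi_k$ still leaves a remainder built solely from commutators, hence of strictly lower differential order, and that in the parameter-dependent norms this lower order becomes a genuine factor $\spk{\lambda}^{-1}$. This rests, however, on exactly the commutator computations already carried out for $C(\lambda)$ in Lemma~\ref{2.8}, together with the Mikhlin and perturbation estimates of Lemmas~\ref{2.3}--\ref{2.4}, so apart from careful bookkeeping with the extension and restriction operators near versus away from the boundary, no essentially new estimate should be needed.
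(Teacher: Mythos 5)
Your proof is correct, and the surjectivity half coincides with the paper's: $R(\lambda)=R_0(\lambda)(1+C(\lambda))^{-1}$ is a bounded right inverse by Lemma~\ref{2.8}. For injectivity, however, you take a genuinely different route. The paper does not build a left parametrix; it simply observes that $\E^{2m,0}(\R^n_+)$ and $\F^{2m,0}(\R^n_+)$ are classical parameter-dependent Sobolev/Besov spaces, invokes the known unique solvability of parameter-elliptic problems there (citing \cite{Agranovich-Faierman-Denk97}, Theorem~2.1) to conclude that $\binom{\lambda-A}{B}$ is injective on $\mathscr S(\R^n_+)$, and then applies Lemma~\ref{2.9}. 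You instead reprove that injectivity from scratch: you mirror the construction of $R_0(\lambda)$ with $\varphi_k$ and $\psi_k$ swapped, derive $S_0(\lambda)\binom{\lambda-A}{B}u=(1+D(\lambda))u$ from the same commutator identities used for $C(\lambda)$, and kill $D(\lambda)$ by the $O(\spk{\lambda}^{-1})$ gain of the embedding $\F^{s+1,\sigma}\hookrightarrow\F^{s,\sigma}$. Your argument is more self-contained (no appeal to the classical solvability result) at the cost of redoing the localization once more; the paper's is shorter but imports an external theorem. Both are sound.

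Two small remarks. First, the Besov analogue of the $O(\spk{\lambda}^{-1})$ embedding that you invoke, namely $\|u\|_{B^{s}_{pp,\lambda}(\R^{n-1})}\le C\spk{\lambda}^{-1}\|u\|_{B^{s+1}_{pp,\lambda}(\R^{n-1})}$, follows directly from Definition~\ref{1.10}, and the paper's own proof of the invertibility of $1+C(\lambda)$ in Lemma~\ref{2.8} relies on exactly this estimate; so you are not using anything the paper doesn't already use. Second, once you have both a right inverse $R(\lambda)$ and a left-parametrix identity $S_0(\lambda)T=1+D(\lambda)$ with $1+D(\lambda)$ invertible, you could extend the latter from $\mathscr S(\R^n_+)$ to all of $\E^{s,\sigma}(\R^n_+)$ by density and continuity and conclude bijectivity directly, without Lemma~\ref{2.9}; your detour through $\mathscr S$ and Lemma~\ref{2.9} is harmless but, unlike in the paper's proof, not strictly necessary for you.
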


\begin{proof}
Let  $\lambda_0$ be as in Lemma~\ref{2.8}.
For $R(\lambda) = R_0(\lambda) (1+C(\lambda))^{-1} $, we have
$R(\lambda)\in L(\F^{s,\sigma}(\R^n_+), \E^{s,\sigma}(\R^n_+))$
by Lemma~\ref{2.7} and Lemma~\ref{2.8}. From \eqref{2-11} we obtain
\[ \binom{\lambda-A}B R(\lambda) = (1+C(\lambda))(1+C(\lambda))^{-1}  =
\id\nolimits_{\F^{s,\sigma}(\R^n_+)}.\]
In particular, the operator in \eqref{2-12} is surjective.

To show injectivity (i.e., uniqueness of the solution), we
remark that $ \F^{2m,0}(\R^n_+)$ and $\E^{2m,0}(\R^n_+)$ are
classical spaces, and therefore we obtain unique solvability in these spaces
(see, e.g., \cite{Agranovich-Faierman-Denk97}, Theorem~2.1).
In particular, the restriction of the operator \eqref{2-12} to $\mathscr S(
{\R^n_+})$ is injective. Now we can apply Lemma~\ref{2.9}
with $T = \binom{\lambda-A}B$ and $R=R(\lambda)$ in the
spaces $E = \E^{s,\sigma}(\R^n_+)$,
$F= \F^{s,\sigma}(\R^n_+)$, and $E_0 = \mathscr S({\R^n_+})$.
\end{proof}

\begin{corollary}\label{2.11}
 In the situation of Theorem \ref{2.10}, let additionally $\sigma\in(-\infty,0]$. Then, there exists a $\lambda_0>0$ such that for
every $\lambda\in\Lambda$ with $|\lambda|\ge \lambda_0$ and
\[ (f,g)\in H_{p,\lambda}^{s-2m}(\R^n_+)\times\prod_{j=1}^m B_{pp,\lambda}^{s+\sigma-m_j-1/p}
(\R^{n-1}) \]
the boundary value
problem \eqref{2-1} has a unique solution $u\in H_{p,\lambda}^{s,\sigma}(\R^n_+)$. In particular, we have
$u\in H_{p,\lambda}^{s+\sigma}(\R^n_+)$ and
\[ \|u\|_{H_{p,\lambda}^{s+\sigma}(\R^n_+)} \le C \Big( \|f\|_{H_{p,\lambda}^{s-2m}(\R^n_+)}
+ \sum_{j=1}^m \|g_j\|_{B_{pp,\lambda}^{s+\sigma-m_j-1/p}
(\R^{n-1}) }\Big)\]
with a constant $C$ independent of $\lambda$.
\end{corollary}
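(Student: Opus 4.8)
The plan is to derive the corollary directly from Theorem~\ref{2.10}, using only the embeddings of Proposition~\ref{1.4}~c), which remain valid with constants uniform in $\lambda$ in the parameter-dependent half-space spaces by Lemma~\ref{1.11}~a) together with Remark~\ref{1.6}~c).

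First I would record the effect of the sign condition $\sigma\le 0$ on the data space. Reading Proposition~\ref{1.4}~c) with $-\sigma\ge 0$ in the role of the nonnegative parameter there gives the continuous inclusion $H_{p,\lambda}^{s-2m}(\R^n_+)=H_{p,\lambda}^{s-2m,0}(\R^n_+)\subset H_{p,\lambda}^{s-2m,\sigma}(\R^n_+)$ with norm independent of $\lambda$. Hence any $(f,g)$ as in the statement lies in $\F^{s,\sigma}(\R^n_+)$ and
\[ \Big\| \binom fg \Big\|_{\F^{s,\sigma}(\R^n_+)}\le C\Big( \|f\|_{H_{p,\lambda}^{s-2m}(\R^n_+)}+\sum_{j=1}^m \|g_j\|_{B_{pp,\lambda}^{s+\sigma-m_j-1/p}(\R^{n-1})}\Big) \]
with $C$ independent of $\lambda$.

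Next I would apply Theorem~\ref{2.10}: there is $\lambda_0>0$ such that for all $\lambda\in\Lambda$ with $|\lambda|\ge\lambda_0$ the operator $\binom{\lambda-A}{B}\colon\E^{s,\sigma}(\R^n_+)\to\F^{s,\sigma}(\R^n_+)$ is an isomorphism of parameter-dependent spaces, with inverse $R(\lambda)=R_0(\lambda)(1+C(\lambda))^{-1}$. Thus, for such $\lambda$, the function $u:=R(\lambda)\binom fg\in\E^{s,\sigma}(\R^n_+)=H_{p,\lambda}^{s,\sigma}(\R^n_+)$ is the unique solution of \eqref{2-1} in $H_{p,\lambda}^{s,\sigma}(\R^n_+)$, and since an isomorphism of parameter-dependent families has uniformly bounded inverse (Definition~\ref{1.10.0}),
\[ \|u\|_{H_{p,\lambda}^{s,\sigma}(\R^n_+)}\le C\Big\| \binom fg \Big\|_{\F^{s,\sigma}(\R^n_+)} \]
uniformly in $\lambda$. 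Finally, invoking Proposition~\ref{1.4}~c) once more (again with $-\sigma\ge 0$ as the nonnegative parameter) yields the continuous inclusion $H_{p,\lambda}^{s,\sigma}(\R^n_+)\subset H_{p,\lambda}^{s+\sigma}(\R^n_+)$ with constant uniform in $\lambda$; chaining the three displayed estimates gives $u\in H_{p,\lambda}^{s+\sigma}(\R^n_+)$ together with the asserted bound.

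This argument is essentially bookkeeping, so I do not expect a genuine obstacle; the only point requiring care is to use the parameter-dependent forms of the embeddings (Lemma~\ref{1.11}~a)) rather than the plain ones of Proposition~\ref{1.4}, so that none of the constants above depend on $\lambda$. I would also make explicit that the uniqueness here is uniqueness \emph{within} $H_{p,\lambda}^{s,\sigma}(\R^n_+)$ and not within the larger space $H_{p,\lambda}^{s+\sigma}(\R^n_+)$: the argument only delivers the former, since the injectivity coming from Theorem~\ref{2.10} is on $\E^{s,\sigma}(\R^n_+)$.
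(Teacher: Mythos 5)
Your argument is correct and is essentially the same as the paper's: the paper likewise deduces the corollary directly from Theorem~\ref{2.10} together with the continuous embeddings $H_{p,\lambda}^{s-2m}(\R^n_+)\subset H_{p,\lambda}^{s-2m,\sigma}(\R^n_+)$ and $H_{p,\lambda}^{s,\sigma}(\R^n_+)\subset H_{p,\lambda}^{s+\sigma}(\R^n_+)$ for $\sigma\le 0$. Your added remarks on $\lambda$-uniformity of the embedding constants and on the precise space in which uniqueness holds are accurate and consistent with the paper's setup.
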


\begin{proof}
This follows immediately from Theorem~\ref{2.10} and  the continuous embeddings
$H_{p,\lambda}^{s-2m}(\R^n_+)\subset H_{p,\lambda}^{s-2m,\sigma}(\R^n_+)$ and
$H_{p,\lambda}^{s,\sigma}(\R^n_+)\subset H_{p,\lambda}^{s+\sigma}(\R^n_+)$. 
\end{proof}

In Theorem~\ref{2.10}, we considered the half-space case. For an operator $A$ acting in the whole space,
the analog results hold, where the proofs are similar but much simpler, due to the absence of boundary operators.
We obtain the following result.

\begin{lemma}
 \label{2.13}
 Let $A=A(x,D)$ be an operator of the form \eqref{2-6} with coefficients $a_\alpha\colon\R^n\to\C$, and assume that
 $\lambda-A$ is parameter-elliptic in $\Lambda$. Let $s,\sigma\in\R$, and assume  \textnormal{(S1)} and \textnormal{(S2)} to hold.
 Then, there exists a $\lambda_0>0$ such that for every $\lambda\in\Lambda$ with $|\lambda|\ge\lambda_0$, the operator
 \[ \lambda-A \colon \E^{s,\sigma}(\R^n) \to \F^{s,\sigma}(\R^n)\]
 is an isomorphism.
 \end{lemma}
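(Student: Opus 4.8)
The plan is to mimic the proof of Theorem~\ref{2.10}, dropping everything related to boundary operators, which makes the argument considerably shorter. First, using parameter-ellipticity of $\lambda-A$ together with \textnormal{(S2)}, I construct a finite open covering $\R^n\subset\bigcup_{k=0}^K U_k$, where $U_0=\{x\in\R^n:|x|>r_0\}$ is the complement of a large ball, associated with the limit point $x_0=\infty$, and $U_k=B(x_k,r_k)$ for $k=1,\dots,K$. The radii are chosen so small that on $\wt U_k$ the top-order coefficients deviate from their central values by less than $\delta_0:=\delta\big(\tfrac1{2\rho_A},s,\sigma\big)$, where $\rho_A$ is the first supremum appearing in Lemma~\ref{2.5a} (finite by that lemma) and $\delta$ is from Lemma~\ref{2.3}~a). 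As in Remark~\ref{2.6} I then pass to the extended coefficients $a_\alpha^k(x):=a_\alpha(\chi_{U_k}(x))$, which agree with $a_\alpha$ on $U_k$, inherit the smoothness from \textnormal{(S1)}, and satisfy $\|a_\alpha^k-a_\alpha(x_k)\|_{L^\infty(\R^n)}<\delta_0$ for all $|\alpha|=2m$; with them I associate the operators $A^k:=\sum_{|\alpha|\le 2m}a_\alpha^k D^\alpha$.

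Second, writing $A^k=A_0^k+\wt A^k$ with frozen principal part $A_0^k:=\sum_{|\alpha|=2m}a_\alpha(x_k)D^\alpha$ and a remainder whose top-order coefficients are $L^\infty$-small, Lemma~\ref{2.4} provides a $\lambda_0>0$ such that $\lambda-A^k\colon\E^{s,\sigma}(\R^n)\to\F^{s,\sigma}(\R^n)$ is an isomorphism for $\lambda\in\Lambda$, $|\lambda|\ge\lambda_0$; I denote the inverses by $L_k(\lambda)$, with norms bounded uniformly in $k$ and $\lambda$. Fixing a partition of unity $\varphi_k\in C^\infty(\R^n)$ subordinate to $(U_k)$ with $\sum_k\varphi_k=1$ on $\R^n$, and cut-offs $\psi_k\in C^\infty(\R^n)$ with $\supp\psi_k\subset U_k$ and $\psi_k\equiv 1$ on $\supp\varphi_k$, I set $R_0(\lambda)f:=\sum_{k=0}^K\psi_k L_k(\lambda)(\varphi_k f)$. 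Since $[A^k,\psi_k]$ is a differential operator of order at most $2m-1$, it maps $\E^{s,\sigma}(\R^n)=H^{s,\sigma}_{p,\lambda}(\R^n)$ boundedly into $H^{s-2m+1,\sigma}_{p,\lambda}(\R^n)=\F^{s+1,\sigma}(\R^n)$, so $C_k(\lambda):=-[A^k,\psi_k]L_k(\lambda)\in L(\F^{s,\sigma}(\R^n),\F^{s+1,\sigma}(\R^n))$. Using $\psi_k(\lambda-A^k)=(\lambda-A^k)\psi_k+[A^k,\psi_k]$ and $\psi_k\varphi_k=\varphi_k$, exactly as in Lemma~\ref{2.8}, I obtain $(\lambda-A)R_0(\lambda)=1+C(\lambda)$ with $C(\lambda)f:=\sum_{k=0}^K C_k(\lambda)(\varphi_k f)\in L(\F^{s,\sigma}(\R^n),\F^{s+1,\sigma}(\R^n))$. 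Arguing for the lower-order terms as in Lemma~\ref{2.3}~a) and invoking the interpolation inequality of Lemma~\ref{1.11}~c), one sees that after enlarging $\lambda_0$ the norm of $C(\lambda)$ in $L(\F^{s,\sigma}(\R^n))$ is $<\tfrac12$, so $1+C(\lambda)$ is invertible by a Neumann series; hence $R(\lambda):=R_0(\lambda)(1+C(\lambda))^{-1}\in L(\F^{s,\sigma}(\R^n),\E^{s,\sigma}(\R^n))$ is a right inverse of $\lambda-A$, and in particular $\lambda-A$ in \eqref{DefE}--\eqref{DefF} is surjective.

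Third, for injectivity I note that $\E^{2m,0}(\R^n)$ and $\F^{2m,0}(\R^n)$ are the classical parameter-dependent Bessel potential spaces $H^{2m}_{p,\lambda}(\R^n)$ and $L^p(\R^n)$, in which $\lambda-A$ is bijective for $|\lambda|\ge\lambda_0$ by the standard theory of parameter-elliptic operators in $\R^n$ (e.g.\ \cite{Agranovich-Faierman-Denk97}); in particular $\lambda-A$ is injective on $\mathscr S(\R^n)$. Since $\lambda-A$ is a retraction (it admits the right inverse $R(\lambda)$) and $\mathscr S(\R^n)$ is dense in $\E^{s,\sigma}(\R^n)$, Lemma~\ref{2.9} yields injectivity of $\lambda-A\colon\E^{s,\sigma}(\R^n)\to\F^{s,\sigma}(\R^n)$, whence it is an isomorphism. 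There is no serious obstacle here beyond careful bookkeeping: the only subtle point, as in Remark~\ref{2.6}, is that the $\BUC^r$-norms of the modified coefficients $a_\alpha^k$ depend on the radii $r_k$, so the final $\lambda_0$ depends on them; this causes no circularity because $\delta_0$ was chosen independently of the $\BUC^r$-norm, so the covering need not be revised after the coefficients are extended.
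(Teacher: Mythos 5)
Your proof is correct and is exactly the boundary-free version of the localization argument (Lemmas~\ref{2.3}--\ref{2.9}, Theorem~\ref{2.10}) that the paper has in mind when it says the whole-space case is ``similar but much simpler''; the covering by $U_0$ and finitely many balls, the extended coefficients $a_\alpha\circ\chi_{U_k}$, Lemma~\ref{2.4} for the local inverses, the order-$(2m-1)$ commutator $[\A^k,\psi_k]$ giving $C(\lambda)\in L(\F^{s,\sigma}(\R^n),\F^{s+1,\sigma}(\R^n))$, the Neumann series, and injectivity via classical theory for $(s,\sigma)=(2m,0)$ plus Lemma~\ref{2.9} are all precisely the right pieces. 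One small inaccuracy worth flagging: the smallness of $\|C(\lambda)\|_{L(\F^{s,\sigma}(\R^n))}$ for large $|\lambda|$ comes from the embedding estimate $\|v\|_{H^{s-2m,\sigma}_{p,\lambda}(\R^n)}\le C\spk{\lambda}^{-1}\|v\|_{H^{s-2m+1,\sigma}_{p,\lambda}(\R^n)}$ via the Mikhlin-multiplier argument in the proof of Lemma~\ref{2.3}~a) (which you do also cite), not from the interpolation inequality of Lemma~\ref{1.11}~c), but since you invoke the correct mechanism as well, this does not affect the validity of your argument.
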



\section{Boundary value problems in domains}
\label{sec4}

We now consider \eqref{2-1} in a bounded or exterior domain. Throughout this section, we assume
$\Omega$ to be a domain with compact boundary $\Gamma$, and $(\lambda-A,B)$ to be a boundary value problem
which is parameter-elliptic in some sector $\Lambda\subset\C$. Moreover, we assume
 (S1)--(S3) and (S5) to hold.

We define $C^\infty(\overline\Omega)$ as the restriction of all $u\in
C_0^\infty(\R^n)$ to $\Omega$. As the definition of the spaces $H^{s,\sigma}_{p,\lambda}$
is non-canonical in domains, we will only consider standard Sobolev spaces
on $\Omega$. For the construction of the solution operators, we will use
local coordinates where the  space  $H_{p,\lambda}^{s,\sigma}(\R^n_+)$ is available. 

We start with some remarks concerning the localization technique: Let $ x_{0}\in \Gamma $. Since the domain $ \Omega $ has a $ C^{2m+\lceil r'\rceil} $-boundary, there is an open set $ \wt U_{x_{0}} $ containing~$ x_{0} $, a radius $ r_{x_{0}}> 0$ and a $ C^{2m+\lceil r'\rceil} $-diffeomorphism $ \diffeo_{x_{0}}\colon \wt V_{x_{0}} \to \wt U_{x_{0}} $, where $ \wt V_{x_{0}} = B(0, 2r_{x_{0}})  $, such that $ \diffeo_{x_{0}}(\wt V_{x_{0}}\cap \R^{n}_{+})= \wt U_{x_{0}}\cap \Omega $ and $ \diffeo_{x_{0}}(0)=x_{0} $. We set $ \V_{x_{0}}\coloneqq B(0,r_{x_{0}}) $ and $U_{x_{0}}\coloneqq \diffeo_{x_{0}}(\V_{x_{0}})  $.
By compactness of $ \Gamma $, there are $ x_{1}, \ldots, x_{K_{0}}\in \Gamma $ and open sets $ U_{x_{1}}, \ldots, U_{x_{K_{0}}} $ as above such that
$  \Gamma \subset \bigcup_{k=1}^{K_{0}} U_{x_{k}} $.
For the sake of simplicity, we shall use $ k $ instead of $ x_{k} $ as index.

 We proceed similarly as in the half-space case. Hence, we define
\begin{align*}
\delta_{\max}\coloneqq \sup\bigg\{\delta>0 \,\Big|\, \{x \in \Omega ~|~ \operatorname{dist}(x,\Gamma)\leq \delta\} \subset
\bigcup\limits_{k=1}^{K_0} U_k \bigg\}.
\end{align*}
If $\Omega$ is bounded,
$\Omega \,\backslash \bigcup\nolimits_{k=1}^{K_0} U_{k}$
is compact, and we can choose $ x_{k} $ in $ \Omega $ and $ 0<r_{k}<\frac{\delta_{\max}}{2} $  such that
\begin{align}\label{3-3a}
 U_{k}\coloneqq B(x_{k},r_{k})\subset \Big\{x\in\Omega: \operatorname{dist}(x,\Gamma)>\frac{\delta_{\max}}{2}\Big\}
\end{align}
for $ k=K_{0}+1,\ldots, K $ and
$ \overline{\Omega}\subset \bigcup_{k=1}^{K}U_{k} $.

In the case of an exterior domain, this construction has to be slightly modified. We first define $U_{K_0+1} := \R^n\setminus\overline {B(0,r_{K_0+1})}$, where the radius $r_{K_0+1}$ is chosen such that
$\R^n\setminus\Omega\subset B(0,\frac{r_{K_0+1}}{2})$. Now
$\Omega\setminus\bigcup_{k=1}^{K_0+1}U_k$ is compact, and
we choose $x_k$ and~$r_k$ with \eqref{3-3a} for $k=K_0+2,\dots,K$ such that
again
$\overline{\Omega}\subset \bigcup_{k=1}^{K}U_{k} $.

For formal reasons, 
we define $\V_k \coloneqq U_k$ and
$\diffeo_k\coloneqq \id_{\V_k} $ for $k=K_0+1,\dots,K$.

\begin{remark}[Local operators and extensions]
	Let $ x_{1}, \ldots, x_{K} $ be chosen as above. For $k\in\{1,\dots, K_0\}$, we define the local operator $\wt A^k$ as the pullback
	of the operator~$A$ by $\diffeo_k$. More precisely, for $\vhs \in C^\infty(\wt \V_k)$, we write
	\[  (\wt A^{k}\vhs)(\y) \coloneqq A(\vhs\circ \diffeo_{k}^{-1})(\diffeo_{k}(\y)) =: \sum_{|\alpha|\le 2m}
		\wt a_\alpha^{ k}(\y) D^\alpha \vhs(\y)\quad (\y
		\in \wt\V_{k}\cap \overline{\R^n_+}).  \]
The explicit description of the coefficients $\wt a_\alpha^{ k}$ (Fa\`{a} di Bruno-formula, see \cite{Fraenkel78}, Formula~B)  shows that
$\wt a_\alpha^{ k}$ contains the function $a_\alpha \circ \diffeo_k$ as well as derivatives of $\diffeo_k ^{-1}$ up to order
$2m+1-|\alpha|$ for $|\alpha|\ge 1$ (and no derivative for $|\alpha|=0$), concatenated with~$\vartheta_k$. Hence we always need at most $2m$ derivatives of $\vartheta_k^{-1}$, which ensure $\wt a_\alpha^{ k} \in \BUC^{\lceil r' \rceil}$. For $|\alpha|=2m$ at most one derivative of $\vartheta_k^{-1}$ appears and as $m \in \N$ we have $2m+\lceil r' \rceil-1 \geq  \lfloor r' \rfloor +1$, which shows $\wt a_\alpha^{ k} \in \BUC^{r}$ for $|\alpha|=2m$.  Consequently, condition (S5) implies that (S1) also holds
for $\wt a_\alpha^{ k}$. In the same way, we define the local operator $ \wt B^k=(\wt B_1^k,\ldots, \wt B_m^k) $ via
\[ 		(\wt B_j^k\vhs)(\y)   \coloneqq B_j(\vhs\circ \diffeo_{k}^{-1})(\diffeo_{k}(\y))=:
		\sum_{|\beta|\le m_j}
		\wt b_{j\beta}^{k}(\y) \gamma_0 D^\beta\vhs (\y) \quad
		(\y
		\in \wt\V_{k}\cap\R^{n-1}).
	\]
	A simple calculation shows that $2m+ \lceil r'\rceil \geq m_j+\lfloor k_j'\rfloor +1 =  m_j+ k_j$ and thus (S5) also implies that the transformed operators $\wt B_j^k$ satisfy (S3) for all  $|\beta| \leq m_j$. For $k\in\{K_0+1,\dots,K\}$, we set
	$\wt a_\alpha^{ k}(\y):= a_\alpha(\y)$ for $\y\in B(x_k,2r_k)$ with some obvious modifications in the case of an exterior domain for $k=K_0+1$.
	
	Again with the general extension function from Remark~\ref{rem_ext}, we extend the coefficients $ \wt a_\alpha^{ k} $ and $ \wt b_{j\beta}^{k} $
	to $ \R^{n}_{+} $, $ \R^{n} $ and $ \R^{n-1} $, respectively.
	We set
	\begin{alignat*}{5}
	\acf_\alpha^{k}( \y) &\coloneqq \wt  a_\alpha^{ k} ( \chi_{V_k}
	(\y))&& ( \y\in
	\overline{\R^n_+})&& \text{ for } k=1,\dots,K_0,\\
	\acf_\alpha^{k}( \y) &\coloneqq  \wt a_\alpha^{ k}(  \chi_{V_k}(\y))&& ( \y\in
	{\R^n})&& \text{ for } k=K_0+1,\dots,K ,\\
	\bcf_{j\beta}^{k}( \y) &\coloneqq \wt  b_{j\beta}^{ k}( \chi_{V_k}
	(\y))&\quad& ( \y\in
	{\R^{n-1}})&& \text{ for } k=1,\dots,K_0.
	\end{alignat*}
	 Finally, we define
	\begin{alignat*}{5}
		\A^{k}\vhs(\y) &\coloneqq \sum_{|\alpha|\le 2m}
		\acf_\alpha^{ k}(\y) D^\alpha \vhs(\y)&& (\y
		\in \overline{\R^{n}_{+}} ) && \textnormal{ for } k=1,\ldots, K_{0}, \\
		\A^{k}\vhs(\y) &\coloneqq \sum_{|\alpha|\le 2m}
		\acf_\alpha^{ k}(\y) D^\alpha \vhs(\y)&& (\y
		\in  \R^{n}) && \textnormal{ for } k=K_{0}+1,\ldots, K, \\
		\B_j^k\vhs(\y) & \coloneqq \sum_{|\beta|\le m_j}
		\bcf_{j\beta}^{ k}(\y) \gamma_0 D^\beta\vhs (\y) &\quad&
		(\y
		\in  \R^{n-1}) && \textnormal{ for } k=1,\ldots, K_{0}.
	\end{alignat*}
\end{remark}

The extended local operators $A^k$ and $B^k$ satisfy the above smoothness and ellipticity assumptions, so we can apply the results from
Section~\ref{sec3}. However, as we do not have the spaces $H^{s,\sigma}_{p,\lambda}$ 
in domains, we use the standard
Sobolev spaces as in Corollary~\ref{2.11}.

Therefore, we additionally  consider the spaces
\begin{align}
  \EE^{s,\sigma}(\Omega) & := H_{p,\lambda}^{s+\sigma}(\Omega),\label{DefEE}\\
  \FF^{s,\sigma}(\Omega) & := H_{p,\lambda}^{s-2m}(\Omega) \times \prod\limits_{j=1}^m B_{pp,
    \lambda}^{s+\sigma-m_j-1/p}(\Gamma) \label{DefFF}
\end{align}
and the analog spaces with $\Omega$ being replaced by $\R^n_+$. We also set $\EE^{s,\sigma}(\R^n) := H_{p,\lambda}^{s+\sigma}(\R^n)$ and $\FF^{s,\sigma}(\R^n):= H_{p,\lambda}^{s-2m}(\R^n)$. 
Note that for $\sigma \leq 0$ we have the continuous embeddings
\begin{align}\label{embeddings_EF}
    \FF^{s,\sigma}\subset \F^{s,\sigma} \quad \textnormal{and} \quad \E^{s,\sigma}\subset \EE^{s,\sigma}.
\end{align}

\begin{lemma}\label{3.8}
Let $s,\sigma \in\rz$ with $s>\max_j m_j+\frac{1}{p}$,
and let $\A^k, \B^k$
denote the extended local operators.
Then there exists a $\lambda_0>0$ such that the operator families
\begin{equation}\label{3-2E}
\begin{alignedat}{4}
 \begin{pmatrix}\lambda-\A^k\\ \B^k\end{pmatrix}&\colon
 \E^{s,\sigma}(\R^n_+) \to\F^{s,\sigma}(\R^n_+)&\quad & (k=1,\dots,K_0),\\
 \lambda-\A^k& \colon
 \E^{s,\sigma}(\R^n )\to \F^{s,\sigma}(\R^n)&& (k=K_0+1,\dots,K)
\end{alignedat}
\end{equation}
for  $\lambda\in\Lambda$ with $|\lambda|\ge \lambda_0$ are isomorphisms.
We denote the inverse operator by $\Lhs_k(\lambda)$. For $\sigma\leq 0$,
the restrictions of $ \Lhs_k(\lambda)$ to $\FF^{s,\sigma}(\R^n_+)$ and $\FF^{s,\sigma}(\R^n)$, respectively,
yield bounded operator families
\begin{equation}\label{3-2EE}
\begin{alignedat}{4}
 \Lhs_k(\lambda) & \in L ( \FF^{s,\sigma}(\R^n_+),\EE^{s,\sigma}(\R^n_+))
  &\quad & (k=1,\dots,K_0),\\
 \Lhs_k(\lambda) & \in L ( \FF^{s,\sigma}(\R^n ),\EE^{s,\sigma}(\R^n ))&& (k=K_0+1,\dots,K).
\end{alignedat}
\end{equation}
\end{lemma}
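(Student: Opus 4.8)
The plan is to derive \eqref{3-2E} directly from the half-space theory of Section~\ref{sec3} and then to obtain \eqref{3-2EE} from the embeddings \eqref{embeddings_EF}, exploiting that there are only finitely many charts. For \eqref{3-2E}: as already noted in the remark on local operators preceding the lemma, each extended operator $\A^k$, $\B^k$ has the form \eqref{2-6}--\eqref{2-7}, and the Fa\`{a} di Bruno count carried out there shows that its coefficients have exactly the regularity required by (S1) and (S3). Conditions (S2) and (S4) hold as well: the extension map $\chi_{V_k}$ of Remark~\ref{rem_ext} is a reflection-type diffeomorphism which equals the identity outside a bounded set, so the extended coefficients are eventually constant and their composition with $x\mapsto x/|x|^2$ extends smoothly across the origin. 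Finally, pullback by the boundary-straightening diffeomorphism $\diffeo_k$ preserves parameter-ellipticity of $\lambda-A$ and the Shapiro--Lopatinskii condition (the latter being formulated intrinsically in the coordinates adapted to a boundary point), and since $\chi_{V_k}$ takes values only in the closure of the relevant chart, the extended problems remain parameter-elliptic in $\Lambda$ at \emph{every} point of $\overline{\R^n}\cup\{\infty\}$ resp.\ $\overline{\R^n_+}\cup\{\infty\}$. Therefore Lemma~\ref{2.13} applies to $\lambda-\A^k$ for $k=K_0+1,\dots,K$ and Theorem~\ref{2.10} applies to $(\lambda-\A^k,\B^k)$ for $k=1,\dots,K_0$; this furnishes, for each $k$, a threshold $\lambda_0^{(k)}>0$ above which the operator in \eqref{3-2E} is an isomorphism with inverse $\Lhs_k(\lambda)$. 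Setting $\lambda_0:=\max_{k}\lambda_0^{(k)}$ (a maximum over finitely many indices) gives a uniform threshold, and by Definition~\ref{1.10.0} the norms $\|\Lhs_k(\lambda)\|_{L(\F^{s,\sigma},\E^{s,\sigma})}$ are bounded uniformly in $\lambda$ for $|\lambda|\ge\lambda_0$.

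For \eqref{3-2EE}, assume now $\sigma\le 0$. By \eqref{embeddings_EF}, applied on $\R^n_+$ for $k=1,\dots,K_0$ and on $\R^n$ for $k=K_0+1,\dots,K$, the inclusions $\FF^{s,\sigma}\hookrightarrow\F^{s,\sigma}$ and $\E^{s,\sigma}\hookrightarrow\EE^{s,\sigma}$ are continuous; being instances of Proposition~\ref{1.4}~b),~c) transported to the parameter-dependent norms via Lemma~\ref{1.11}~a), their norms are bounded uniformly in $\lambda$. Hence the restriction of $\Lhs_k(\lambda)$ to $\FF^{s,\sigma}$ factors as $\FF^{s,\sigma}\hookrightarrow\F^{s,\sigma}\xrightarrow{\ \Lhs_k(\lambda)\ }\E^{s,\sigma}\hookrightarrow\EE^{s,\sigma}$, a composition of three maps each bounded uniformly in $\lambda$; this yields \eqref{3-2EE} with a bound independent of $\lambda$ for $|\lambda|\ge\lambda_0$.

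I expect no genuinely new analytic difficulty to arise: the substance is already contained in Theorem~\ref{2.10}, Lemma~\ref{2.13}, and the embeddings \eqref{embeddings_EF}. The one point that needs care is the verification in the first step that pulling the coefficients back by $\diffeo_k$ and then extending them by $\chi_{V_k}$ simultaneously preserves the differentiability assumptions (S1)/(S3) \emph{and} parameter-ellipticity together with Shapiro--Lopatinskii at every point, including $\infty$ --- this is exactly why $\chi_U$ in Remark~\ref{rem_ext} was built as a reflection map that is the identity on $U$, and it is the subtlety already flagged in the remark on local operators just before the lemma.
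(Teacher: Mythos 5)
Your proof is correct and follows essentially the same route as the paper: verify (S1)--(S4) (and, implicitly, parameter-ellipticity) for the pulled-back and extended local operators, invoke Theorem~\ref{2.10} (or Lemma~\ref{2.13}) to obtain \eqref{3-2E}, and then pass to \eqref{3-2EE} by composing with the embeddings \eqref{embeddings_EF} — which is precisely the content of Corollary~\ref{2.11} that the paper cites for $k\le K_0$. Your additional remarks — that finitely many charts yield a uniform $\lambda_0$, and that the coordinate change plus the reflection-type extension $\chi_{V_k}$ preserve parameter-ellipticity and the Shapiro--Lopatinskii condition at every point including $\infty$ — are points the paper leaves implicit, and you fill them in correctly.
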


\begin{proof}
We have seen above that $A^k, B^k$ satisfy conditions (S1) and (S3). Conditions  (S2)  and  (S4)  follow directly from the fact that the extended coefficients are constant far away from the origin by construction. Hence the statement follows for $k\in\{1,\dots,K_0\}$ from
Theorem~\ref{2.10} and Corollary~\ref{2.11} and for $k\in\{K_0+1,\dots,K\}$ from Lemma~\ref{2.13} and the embeddings~\eqref{embeddings_EF}.
\end{proof}

To solve \eqref{2-1} in  $\Omega$, we first construct an approximate solution operator $R_0(\lambda)$, using the local
solution operators $L_k(\lambda)$ from Lemma~\ref{3.8} and the local coordinate maps~$\diffeo_k$ for $k=1,\dots,K$. Setting
   $\Theta_k \vhs := \vhs\circ\diffeo_k^{-1}$,  the $C^{2m+\lceil r' \rceil }$-diffeomorphism~$\diffeo_k$  induces  isomorphisms
\begin{equation}\label{3-10}
\begin{alignedat}{6}
  \Theta_k  & \colon   H^s_{p,\lambda}(\V_k\cap\R^n_+)  &&\to H^s_{p,\lambda}(U_k\cap\Omega) &\quad & (k=1,\dots,K_0),\\
  \Theta_k  & \colon   H^s_{p,\lambda}(\V_k)  &&\to H^s_{p,\lambda}(U_k) && (k=K_0+1,\dots,K)
\end{alignedat}
\end{equation}
for  $s\in [0,2m+\lceil r' \rceil]$. Since we have $\Theta_k(\dot H_{p,\lambda}^{s}(V_k\cap \R^n_+)) = \dot H_{p,\lambda}^s(U_k \cap \Omega) $, we even get~\eqref{3-10} for all $|s|\leq 2m+ \lceil r' \rceil $ via duality.
Moreover, by the definition of the Besov space on the closed $C^{2m+\lceil r' \rceil}$-manifold
 $\Gamma$, the restriction $\diffeo_k|_{\V_k\cap\R^{n-1}}\colon
  \V_k\cap\R^{n-1}\to U_k\cap \Gamma$ also induces isomorphisms
 \[\Theta_k \colon B_{pp,\lambda}^s(\V_k\cap \R^{n-1}) \to B_{pp,\lambda}^s (U_k\cap \Gamma)\]
 for $k=1,\dots,K_0$ and all $|s|\leq 2m +\lceil r' \rceil$.
 We fix a smooth partition of unity
$\phiom_k\in C^\infty(\R^n)$, $k=1,\dots,K$, with
   $\supp\phiom_k\subset U_k$,  $0\le\phiom_k\le 1$, and
  $\sum_{k=1}^K \phiom_k = 1$
on $\overline\Omega$.
   Additionally, let $\psiom_k\in C^\infty(\R^n)$
  with $0\le \psiom_k\le 1$,  $\supp\psiom_k\subset U_k$ and $\psiom_k=1$ on $\supp\phiom_k$. We set $\psihs_k := \Theta_k^{-1} \psiom_k= \psiom_k\circ\diffeo_k,$
  where here and in the following, we  identify functions with compact support and their
trivial extensions for sake of readability. Without this identification, we have, e.g.,
$\psihs_k = e^0_{V_k}\Theta_k^{-1}(r_{U_k}\psiom_k)$ for $k=K_0 +1,\dots,K$, 
where again $r_{U_k}$ stands for the
restriction to $U_k$ and $e^0_{V_k}$ for the trivial extension to $\R^n$ by zero.

In the following let $\lambda_0>0$ be given as in Lemma \ref{3.8}. The approximate solution operator $R_0(\lambda)$ is now for $\lambda\in\Lambda$, $|\lambda|\ge\lambda_0$
formally defined as
\begin{equation}\label{3-12}
 R_0(\lambda)\binom{f}{g} := \sum_{k=1}^{K_0} \Lom_k(\lambda)\binom{\phiom_k f}{(\gamma_0\phiom_k) g}
  + \sum_{k=K_0+1}^{K} \Lom_k(\lambda)(\phiom_k f).
\end{equation}
  Here, $\Lom_k(\lambda)$ is defined by
  \begin{equation}\label{3-8}
  \begin{alignedat}{4}
       \Lom_k(\lambda)\binom{f}{g} & :=  \Theta_k \psihs_k \Lhs_k(\lambda)  \Theta_k^{-1}\binom{f}{g}
 &\quad& (k=1,\dots,K_0),\\
 \Lom_k(\lambda)f  & := \Theta_k \psihs_k \Lhs_k(\lambda)  \Theta_k^{-1}f
  && (k=K_0+1,\dots, K).
  \end{alignedat}
  \end{equation}

\begin{lemma}
  \label{3.9}
Let $s,\sigma\in\R$ with $s>\max_j m_j+\frac1p$ and $\sigma\le 0$. Then the operator $R_0(\lambda)$ in \eqref{3-12}
is well-defined on $\FF^{s,\sigma}(\Omega)$ for $\lambda\in\Lambda$ with $|\lambda|\ge \lambda_0$
and yields a bounded operator family
  \begin{equation}\label{3-9}
   R_0(\lambda)\in L( \FF^{s,\sigma}(\Omega), \EE^{s,\sigma}(\Omega)).
   \end{equation}
\end{lemma}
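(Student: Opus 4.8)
The plan is to prove the statement by showing that each of the finitely many terms in \eqref{3-12} defines a bounded operator $\FF^{s,\sigma}(\Omega)\to\EE^{s,\sigma}(\Omega)$ whose norm is bounded uniformly in $\lambda\in\Lambda$, $|\lambda|\ge\lambda_0$; adding the finitely many bounds then gives \eqref{3-9}, and well-definedness comes out along the way, since it amounts to recognizing each link of the relevant composition as a legitimate bounded map. So fix $k\in\{1,\dots,K_0\}$ and unwind the definition of $\Lom_k(\lambda)$ in \eqref{3-8}: the $k$-th term of $R_0(\lambda)$ is the composition of five maps --- (i) multiplication of the data by the cutoffs, forming $(\phiom_k f,(\gamma_0\phiom_k)g)$; (ii) the pullback $\Theta_k^{-1}=(\,\cdot\,)\circ\diffeo_k$ to the half-space; (iii) the half-space solution operator $\Lhs_k(\lambda)$ from Lemma~\ref{3.8}; (iv) multiplication by the cutoff $\psihs_k=\psiom_k\circ\diffeo_k$; (v) the pushforward $\Theta_k=(\,\cdot\,)\circ\diffeo_k^{-1}$ back to $\Omega$, followed by extension by zero. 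I would bound each link, uniformly in $\lambda$, keeping track of supports throughout.

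For (i), multiplication by the smooth functions $\phiom_k$ and $\gamma_0\phiom_k$ is bounded on $H^{s-2m}_{p,\lambda}(\Omega)$ and on $B^{s+\sigma-m_j-1/p}_{pp,\lambda}(\Gamma)$, uniformly in $\lambda$, by Lemma~\ref{1.12} (using restriction for $\Omega$ and the chart definition of Besov spaces on the $C^{2m+\lceil r'\rceil}$-manifold $\Gamma$), and the images are supported in compact subsets of $U_k\cap\overline\Omega$ and $U_k\cap\Gamma$. For (ii), by \eqref{3-10} and its Besov analog on $\Gamma$, together with the support property of the $\diffeo_k$-transforms recorded there, the pullback $\Theta_k^{-1}$ carries these data, after extension by zero, boundedly into $H^{s-2m}_{p,\lambda}(\R^n_+)$ and $B^{s+\sigma-m_j-1/p}_{pp,\lambda}(\R^{n-1})$, hence into $\FF^{s,\sigma}(\R^n_+)$; this is applicable because the orders $s-2m$ and $s+\sigma-m_j-\tfrac1p$ both lie in $[-(2m+\lceil r'\rceil),\,2m+\lceil r'\rceil]$ --- indeed $|s-2m|\le r'$, and since $0<2m-m_j-\tfrac1p<2m$ one has $|s+\sigma-m_j-\tfrac1p|\le|s+\sigma-2m|+(2m-m_j-\tfrac1p)\le r'+2m$ --- and because $\diffeo_k$ is independent of $\lambda$. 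For (iii), Lemma~\ref{3.8}, applied with $\sigma\le0$, gives $\Lhs_k(\lambda)\in L(\FF^{s,\sigma}(\R^n_+),\EE^{s,\sigma}(\R^n_+))$ with $\lambda$-uniform norm, producing an element of $H^{s+\sigma}_{p,\lambda}(\R^n_+)$. For (iv), since $\diffeo_k\in C^{2m+\lceil r'\rceil}$ we have $\psihs_k\in\BUC^{2m+\lceil r'\rceil}$, and as $\lceil|s+\sigma|\rceil\le\lceil r'+2m\rceil=2m+\lceil r'\rceil$, Lemma~\ref{1.12}~b) shows that multiplication by $\psihs_k$ is bounded on $H^{s+\sigma}_{p,\lambda}(\R^n_+)$, uniformly in $\lambda$, with image supported in $\V_k\cap\overline{\R^n_+}$. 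For (v), $\Theta_k$ sends this, via \eqref{3-10} in order $s+\sigma$ (admissible since $|s+\sigma|\le r'+2m\le2m+\lceil r'\rceil$) and the support property, to an element of $H^{s+\sigma}_{p,\lambda}(\Omega)=\EE^{s,\sigma}(\Omega)$ supported in $\overline{U_k\cap\Omega}$, with uniform bound. Composing the five uniform bounds gives, for this summand, $\|\Lom_k(\lambda)(\phiom_k f,(\gamma_0\phiom_k)g)\|_{\EE^{s,\sigma}(\Omega)}\le C_k\|(f,g)\|_{\FF^{s,\sigma}(\Omega)}$ with $C_k$ independent of $\lambda$. For $k\in\{K_0+1,\dots,K\}$ the same chain applies verbatim with $\diffeo_k=\id$ and with the boundary component and the trace absent, using $\Lhs_k(\lambda)\in L(\FF^{s,\sigma}(\R^n),\EE^{s,\sigma}(\R^n))$ from Lemma~\ref{3.8}; this case is strictly simpler.

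Summing over $k=1,\dots,K$ then yields $R_0(\lambda)\in L(\FF^{s,\sigma}(\Omega),\EE^{s,\sigma}(\Omega))$ with norm at most $\sum_k C_k$, uniformly in $\lambda\in\Lambda$ with $|\lambda|\ge\lambda_0$, which is \eqref{3-9}. I expect the main obstacle to be not any single estimate --- the estimates are routine invocations of Lemma~\ref{1.12} and Lemma~\ref{3.8} --- but the bookkeeping: checking that every order appearing along the chain ($s-2m$, $s+\sigma-m_j-\tfrac1p$, and $s+\sigma$) stays inside the interval $[-(2m+\lceil r'\rceil),\,2m+\lceil r'\rceil]$ in which the diffeomorphism pullbacks \eqref{3-10} and their boundary analogues are available, and keeping track of supports so that every occurrence of $\Theta_k^{-1}$, $\Theta_k$ and of extension/restriction by zero is legitimate. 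This is exactly where the $C^{2m+\lceil r'\rceil}$-regularity of $\partial\Omega$ from (S5) comes in.
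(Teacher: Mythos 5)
Your proof is correct and takes essentially the same approach as the paper; the paper's own proof is a single sentence ("The continuity of $R_0(\lambda)$ in the corresponding spaces follows from \eqref{3-10} and Lemma~\ref{3.8}"), and your argument simply fills in the five-link composition that sentence alludes to, with the extra care of verifying that all three orders $s-2m$, $s+\sigma-m_j-\tfrac1p$, and $s+\sigma$ lie within the range $[-(2m+\lceil r'\rceil),\,2m+\lceil r'\rceil]$ where \eqref{3-10} and its Besov analogue apply.
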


\begin{proof}

 The continuity of $R_0(\lambda)$ in the corresponding spaces follows from \eqref{3-10} and Lemma~\ref{3.8}.
\end{proof}

In the following, we will modify $R_0(\lambda)$ to get a solution. For this, we compute $(\lambda-A,B)R_0(\lambda)(f,g)$,
where we may choose $(f,g)$ sufficiently smooth such that the classical theory can be applied.
Therefore, we introduce $s'$, and assume from now on that $s,\sigma,s'\in\R$ satisfy
\begin{equation}
  \label{3-7}
  s > \max_{j=1,\dots,m} m_j + \tfrac 1p ,\quad -1 < \sigma\le 0,\quad s'\geq \max\{2m,s\}.
\end{equation}
Moreover, we assume (S1)--(S3) and (S5) 
for $(s,\sigma)$ (as before) and also for $(s',0)$.
The conditions with respect to $(s',0)$ collapse to $r'=s'-2m$ and $k_j'=s'-m_j-\frac{1}{p}$. In the end we take the maximum, respectively.

In contrast to the half-space situation, we have a restriction on $\sigma$ in \eqref{3-7}. This is essentially due to
the commutator estimates and the fact that we only consider standard Sobolev spaces in $\Omega$.
\begin{lemma}
  \label{3.10}
Let $s,\sigma$ and $s'$ satisfy \eqref{3-7}. Let $0<\eps <\min\{1+\sigma, \frac1p\}$.
\begin{enumerate}[a)]
\item
For $\lambda\in\Lambda$ with $|\lambda|\ge\lambda_0$, define
  the operator $C_k(\lambda)$ by
  \begin{alignat*}{4}
  C_k(\lambda)  & := \binom{
    -[\A^k,\psihs_k]}{[\B^k,\psihs_k]}
    \Lhs _k(\lambda)&\quad& (k=1,\dots,K_0),\\
    C_k(\lambda)  & := -[\A^k,\psihs_k]\Lhs _k(\lambda)
    && (k=K_0+1,\dots, K).
  \end{alignat*}
  Then
  \begin{alignat*}{4}
  C_k(\lambda)\in L(\FF ^{s,\sigma}(\R^n_+),\FF ^{s+\eps,\sigma}
  (\R^n_+)) &\quad& (k=1,\dots,K_0),\\
  C_k(\lambda)\in L(\FF ^{s,\sigma}(\R^n),\FF ^{s+\eps,\sigma}
  (\R^n))  && (k=K_0+1,\dots, K).
  \end{alignat*}
  \item Let $(f,g)\in \FF ^{s',0}(\Omega)$, and set $v := R_0(\lambda)(f,g)$
with $R_0(\lambda)$ being defined in \eqref{3-12}. Then
  \begin{equation}
    \label{3-11}
    \binom {\lambda -A}{B} v = (1+C(\lambda))\binom f g
  \end{equation}
  holds with an operator $C(\lambda)\in L(\FF ^{s,\sigma}(\Omega),
  \FF ^{s+\eps,\sigma} (\Omega))$
  and there exists a $\lambda_1\geq\lambda_0$ such that
  $1+C(\lambda)\in L (\FF ^{s,\sigma}(\Omega))$ is invertible
    for all $\lambda\in\Lambda$ with $|\lambda|\ge\lambda_1$.
 \end{enumerate}
\end{lemma}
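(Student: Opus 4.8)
The plan is to follow the half-space proof of Lemma~\ref{2.8} almost verbatim; the genuinely new ingredients are the diffeomorphic transfer of the local commutator bounds to $\Omega$ and the exponent bookkeeping that is forced upon us by working with the standard spaces $\EE^{s,\sigma}$, $\FF^{s,\sigma}$ in the domain (rather than with the mixed-smoothness spaces). \emph{For part a)}, recall that, exactly as in Lemma~\ref{2.8}, $[\A^k,\psihs_k]$ is a differential operator of order $\le 2m-1$ and each $[\B_j^k,\psihs_k]$ is a boundary operator of order $\le m_j-1$ (the zero operator if $m_j=0$), with coefficients in the same $\BUC$-classes as those of $\A^k$, $\B^k$ and supported in $\V_k$. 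By Lemma~\ref{3.8}, $\Lhs_k(\lambda)$ maps $\FF^{s,\sigma}$ boundedly into $\EE^{s,\sigma}=H^{s+\sigma}_{p,\lambda}$, uniformly in $\lambda$; applying the commutator then lands, with $\lambda$-uniform norm, in $H^{s+\sigma-2m+1}_{p,\lambda}(\R^n_+)$ for the interior part (use Lemma~\ref{1.12} and $|\xi^\gamma|\le\spk{\xi,\lambda}^{|\gamma|}$) and in $B^{\,s+\sigma-m_j+1-1/p}_{pp,\lambda}(\R^{n-1})$ for the $j$-th boundary part (use the continuity of $\gamma_0 D^\gamma$ for $|\gamma|\le m_j-1$ and Lemma~\ref{1.12}~e)). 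Here it is essential that these traces are defined on $H^{s+\sigma}_{p,\lambda}(\R^n_+)$, i.e.\ $s+\sigma>m_j-1+\tfrac1p$, which holds because $s>m_j+\tfrac1p$ and $\sigma>-1$. Since $\eps\le 1+\sigma$ and $\eps\le\tfrac1p$, the embeddings $H^{s+\sigma-2m+1}_{p,\lambda}\hookrightarrow H^{s+\eps-2m}_{p,\lambda}$ and $B^{\,s+\sigma-m_j+1-1/p}_{pp,\lambda}\hookrightarrow B^{\,s+\sigma+\eps-m_j-1/p}_{pp,\lambda}$ have $\lambda$-uniform norms, giving $C_k(\lambda)\in L(\FF^{s,\sigma}(\R^n_+),\FF^{s+\eps,\sigma}(\R^n_+))$; the case $k>K_0$ is identical, with only the interior part present.

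\emph{For part b)}, fix $(f,g)\in\FF^{s',0}(\Omega)\subset\FF^{s,\sigma}(\Omega)$ and set $v:=R_0(\lambda)(f,g)$. By Lemma~\ref{3.9} applied to the pair $(s',0)$ — whose smoothness hypotheses are assumed — one has $v\in\EE^{s',0}(\Omega)=H^{s'}_{p,\lambda}(\Omega)$ with $s'\ge 2m$, so $A$ and $B$ act on $v$ classically. Apply $\binom{\lambda-A}{B}$ termwise to \eqref{3-12}. Writing $\phihs_k:=\phiom_k\circ\diffeo_k$, one has $\Theta_k^{-1}\binom{\phiom_k f}{(\gamma_0\phiom_k)g}=\binom{\phihs_k\Theta_k^{-1}f}{(\gamma_0\phihs_k)\Theta_k^{-1}g}$; since $\psihs_k\Lhs_k(\lambda)(\cdot)$ is supported in $\V_k$, the function $\Theta_k\psihs_k\Lhs_k(\lambda)(\cdot)$ is supported in $U_k$, on which $A=\Theta_k\A^k\Theta_k^{-1}$ and $B=\Theta_k\B^k\Theta_k^{-1}$ by construction of the local operators. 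Using $\lambda-\A^k=\psihs_k(\lambda-\A^k)-[\A^k,\psihs_k]$ (and the analogous splitting of $\B^k$ via the definition of $[\B^k,\psihs_k]$), $\psihs_k\equiv1$ on $\supp\phihs_k$, and $\binom{\lambda-\A^k}{\B^k}\Lhs_k(\lambda)=\id$, the $k$-th term ($k\le K_0$) reduces, exactly as in Lemma~\ref{2.8}, to $\binom{\phiom_k f}{(\gamma_0\phiom_k)g}+\Theta_k C_k(\lambda)\Theta_k^{-1}\binom{\phiom_k f}{(\gamma_0\phiom_k)g}$, and the $k$-th term ($k>K_0$, where $\diffeo_k=\id$ and the whole-space inverse of Lemma~\ref{2.13} is used) to $\binom{\phiom_k f+C_k(\lambda)(\phiom_k f)}{0}$, the lower entry vanishing since everything is supported in the interior of $\Omega$. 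Summing over $k$ and using $\sum_k\phiom_k\equiv1$ on $\overline\Omega$ yields \eqref{3-11} with
\[ C(\lambda)\binom fg:=\sum_{k=1}^{K_0}\Theta_k C_k(\lambda)\Theta_k^{-1}\binom{\phiom_k f}{(\gamma_0\phiom_k)g}+\sum_{k=K_0+1}^{K}\binom{C_k(\lambda)(\phiom_k f)}{0}. \]

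It remains to analyse $C(\lambda)$. By part a), each $C_k(\lambda)$ is bounded $\FF^{s,\sigma}\to\FF^{s+\eps,\sigma}$ uniformly in $\lambda$; since $\eps<1$ all exponents appearing stay in $[-(2m+\lceil r'\rceil),\,2m+\lceil r'\rceil]$, so by (S5) the maps $\diffeo_k$ induce $\lambda$-uniform isomorphisms of the relevant parameter-dependent $H_{p,\lambda}$- and $B_{pp,\lambda}$-spaces (cf.\ \eqref{3-10} and its Besov analogue), and multiplication by $\phiom_k$ preserves smoothness. Hence $C(\lambda)\in L(\FF^{s,\sigma}(\Omega),\FF^{s+\eps,\sigma}(\Omega))$ uniformly in $\lambda$. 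Composing with the embedding $\FF^{s+\eps,\sigma}(\Omega)\hookrightarrow\FF^{s,\sigma}(\Omega)$, whose norm is $O(\spk{\lambda}^{-\eps})$ because $\spk{\lambda}^{\eps}\spk{\xi,\lambda}^{-\eps}$ is a Mikhlin multiplier uniformly in $\lambda$ (as in the estimate of the lower-order terms in Lemma~\ref{2.3}~a)), we get $\|C(\lambda)\|_{L(\FF^{s,\sigma}(\Omega))}\to0$ as $|\lambda|\to\infty$; a Neumann series argument then gives $\lambda_1\ge\lambda_0$ with $1+C(\lambda)\in L(\FF^{s,\sigma}(\Omega))$ invertible (inverse bounded by $2$) for $|\lambda|\ge\lambda_1$.

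The main obstacle is the exponent bookkeeping in part a): because in $\Omega$ we are restricted to the standard spaces $\EE^{s,\sigma}=H^{s+\sigma}_{p,\lambda}$, the interior commutator only gains $1+\sigma$ derivatives, and the boundary commutator needs the traces $\gamma_0 D^\gamma$, $|\gamma|\le m_j-1$, to be continuous on $H^{s+\sigma}_{p,\lambda}(\R^n_+)$ — this is exactly what forces $\sigma>-1$ in \eqref{3-7} and caps the smoothing exponent at $\eps<\min\{1+\sigma,\tfrac1p\}$.
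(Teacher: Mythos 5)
Your proposal is correct and follows essentially the same strategy as the paper: show that $[\A^k,\psihs_k]$ and $[\B_j^k,\psihs_k]$ are lower-order, use the $\Lhs_k(\lambda)$-bounds from Lemma~\ref{3.8}, reduce to the half-space computation of Lemma~\ref{2.8} via the pullbacks $\Theta_k$ to obtain \eqref{3-11} with the correct $C(\lambda)$, and close with a Neumann series using the $O(\spk{\lambda}^{-\eps})$ decay of the embedding $\FF^{s+\eps,\sigma}\hookrightarrow\FF^{s,\sigma}$.

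One small inaccuracy in your bookkeeping: in part~b) you justify that the diffeomorphisms $\Theta_k$ act on all the relevant spaces ``since $\eps<1$''. That is not quite enough — for instance with $r'$ an integer, $\sigma=0$ and $m_j=0$ the Besov exponent $s+\sigma+\eps-m_j-\tfrac1p$ can equal $r'+2m+\eps-\tfrac1p$, which only stays $\le 2m+\lceil r'\rceil$ because $\eps<\tfrac1p$, not because $\eps<1$. The paper explicitly cites $\eps<\tfrac1p$ as the condition ensuring that $\Theta_k$, $\Theta_k^{-1}$ and multiplication by $\phiom_k$ preserve the $\FF^{s+\eps,\sigma}$ smoothness. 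Since your hypotheses do include $\eps<\tfrac1p$, the proof goes through, but the stated reason should be tightened.
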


\begin{proof}
a) The operator $[\A^k,\psihs_k]$ is a differential operator of order not greater
than $2m-1$.
Therefore, the mapping
\[ [\A^k,\psihs_k]\colon
\EE^{s,\sigma}(\R^n_+)=H_{p,\lambda}^{s+\sigma}(\R^n_+)\to H_{p,\lambda}^{s-2m+1+\sigma}(\R^n_+)\subset
H_{p,\lambda}^{s+\eps-2m}(\R^n_+)\]
is bounded. 

In analogy to the proof of Lemma \ref{2.8} we obtain that the operator $[\B^k_j,\psihs_k]$ is a boundary operator of order not greater than $m_j-1$.
In the case $m_j=0$, this operator is zero.
As $\EE^{s,\sigma}(\R^n_+)= H_{p,\lambda}^{s+\sigma}(\R^n_+)$
and $\sigma>-1$, the boundary operator $[\B_j^k,\psihs_k]$ is
defined in the classical sense and is continuous as an operator
\[  [\B_j^k,\psihs_k]\colon \EE ^{s,\sigma}(\R^n_+)\to
 B_{pp,\lambda}^{s+\sigma-m_j+1-1/p}(\R^{n-1})\subset B_{pp,\lambda}^{s+\eps+\sigma-m_j-1/p}(\R^{n-1}).\]
By Lemma~\ref{3.8}, we have
$\Lhs _k(\lambda)\in L(\FF ^{s,\sigma}(\R^n_+),
\EE ^{s,\sigma}(\R^n_+))$. This and the above mapping properties for the commutators
show $C_k(\lambda)\in L(\FF ^{s,\sigma}(\R^n_+),\FF ^{s+\eps,\sigma}
  (\R^n_+))$.

b) We first remark that $v\in \EE ^{s',0}(\Omega)$ holds due to Lemma~\ref{3.9}, and, as $s'\geq 2m$,
the boundary operators $B_j$ can be applied to $v$ in the classical sense. Using calculations similar to the ones in the proof of Lemma \ref{2.8} and the equality $ \Theta_k\psi_k\Theta_k^{-1}=\psiom_k$, we obtain

\[\binom {\lambda -A}{B} v = (1+C(\lambda))\binom f g\] with
\[ C(\lambda)\binom f g := \sum_{k=1}^{K_0} \Theta_k C_k(\lambda) \Theta_k^{-1}
  \binom{\phiom_k f}{(\gamma_0\phiom_k) g} + \sum_{k=K_0+1}^K
 \binom{  \Theta_k C_k(\lambda) \Theta_k^{-1}
(\phiom_k f)}{0}.\]

From a) and the fact that multiplication by $\phiom_k$ and the coordinate transformations
$\Theta_k$, $\Theta_k^{-1}$ preserve the smoothness as $\eps<\frac1p$, we obtain
\begin{align*}
    C(\lambda)\in L(\FF ^{s,\sigma}(\Omega),\FF ^{s+\eps,\sigma}
  (\Omega)).
\end{align*}
Proceeding as in the proof of Lemma \ref{2.3}~a), and using the Neumann series as in Lemma~\ref{2.4}, we obtain that for sufficiently large $\lambda$, the
  operator $1+C(\lambda) \in
  L(\FF^{s,\sigma}(\Omega))$ is invertible, and the norm of the inverse is not
  greater than $2$.
\end{proof}

The following theorem is the key result of this section and gives an a priori-estimate for the solution operator of \eqref{2-1} in spaces of rough regularity. Note that we first consider smooth functions, where the boundary operators
are defined in a classical way and where we know unique solvability by classical results. However, the a priori-estimate gives a continuous extension of the solution operator
to larger spaces.

\begin{theorem}
  \label{3.11}
  Let $(\lambda-A,B)$ be parameter-elliptic in the sector $\Lambda$, and let $s,\sigma,s'\in\R$ satisfy \eqref{3-7}.
  Assume \textnormal{(S1)--(S3), (S5)} to hold for $(s,\sigma)$ and $(s',0)$. Then taking $\lambda_1 \geq \lambda_0$ as in Lemma \ref{3.10}~b), for all
  $\lambda\in\Lambda$ with $|\lambda|\ge\lambda_1$, and every
    $(f,g)\in \FF ^{s',0}(\Omega)$,
  the unique solution
  $u\in\EE ^{s',0}(\Omega)$ of \eqref{2-1} is given by
  \[ u = R(\lambda)\binom f g := R_0(\lambda)(1+C(\lambda))^{-1}\binom f g,\]
and we have the a priori-estimate
  \begin{equation}\label{3-13}
   \Big\| R(\lambda)\binom f g\Big\|_{\EE ^{s,\sigma}(\Omega)}\le C
   \Big\|\binom f g\Big\|_{\FF ^{s,\sigma}(\Omega)}
   \end{equation}
   with a constant $C$ not depending on $f,g$ or $\lambda$. In particular, the solution operator $R(\lambda)$ extends uniquely
   to a continuous operator family
   \[ R(\lambda) \in L( \FF ^{s,\sigma}(\Omega), \EE ^{s,\sigma}(\Omega)).\]
\end{theorem}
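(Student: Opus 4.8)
The plan is to settle everything first for smooth data, where the classical theory of parameter-elliptic boundary value problems already supplies existence and uniqueness in $H_{p,\lambda}^{s'}(\Omega)=\EE^{s',0}(\Omega)$, and then to transfer the solution formula and the a priori estimate \eqref{3-13} to rough data by density. The structural fact I would exploit is that $R_0(\lambda)$, $C(\lambda)$, and hence $(1+C(\lambda))^{-1}$, are defined on distributions and do not depend on the regularity indices; only the pair of spaces between which they are continuous changes. Since $s'\ge 2m>\max_j m_j+\tfrac1p$ and $-1<0\le 0$, the triple $(s',0,s')$ also satisfies \eqref{3-7}, and (S1)--(S3), (S5) are assumed for $(s',0)$; hence Lemmas~\ref{3.8}, \ref{3.9} and \ref{3.10} apply verbatim with $(s,\sigma)$ replaced by $(s',0)$ and any $\eps\in(0,\tfrac1p)$. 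Enlarging $\lambda_1$ from Lemma~\ref{3.10}~b) if necessary, I may thus assume that $1+C(\lambda)$ is invertible with inverse of norm $\le 2$ simultaneously on $\FF^{s,\sigma}(\Omega)$ and on $\FF^{s',0}(\Omega)$ for $|\lambda|\ge\lambda_1$; since $\FF^{s',0}(\Omega)\subset\FF^{s,\sigma}(\Omega)$ (because $s'\ge s$ and $\sigma\le 0$), the two inverses agree on $\FF^{s',0}(\Omega)$.

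For $(f,g)\in\FF^{s',0}(\Omega)$ I would then set $(\wt f,\wt g):=(1+C(\lambda))^{-1}(f,g)\in\FF^{s',0}(\Omega)$ and $u:=R_0(\lambda)(\wt f,\wt g)$. By Lemma~\ref{3.9} applied with $(s',0)$ one gets $u\in\EE^{s',0}(\Omega)=H_{p,\lambda}^{s'}(\Omega)$, so, since $s'\ge 2m$, the operators $B_j$ act on $u$ in the classical sense and \eqref{3-11} yields $\binom{\lambda-A}{B}u=(1+C(\lambda))(\wt f,\wt g)=(f,g)$. Hence $u=R(\lambda)(f,g)$ solves \eqref{2-1}, and classical unique solvability of parameter-elliptic boundary value problems in Sobolev spaces for $|\lambda|$ large (e.g.\ \cite{Agranovich-Faierman-Denk97}, Theorem~2.1) shows that $u$ is the unique solution in $\EE^{s',0}(\Omega)$.

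Regarding $(f,g)$ as an element of the larger space $\FF^{s,\sigma}(\Omega)$, the a priori estimate follows from
\[
  \Big\|R(\lambda)\binom fg\Big\|_{\EE^{s,\sigma}(\Omega)}
  \le \|R_0(\lambda)\|_{L(\FF^{s,\sigma}(\Omega),\EE^{s,\sigma}(\Omega))}\,
      \big\|(1+C(\lambda))^{-1}\big\|_{L(\FF^{s,\sigma}(\Omega))}\,
      \Big\|\binom fg\Big\|_{\FF^{s,\sigma}(\Omega)},
\]
the first factor being uniformly bounded in $\lambda$ by Lemma~\ref{3.9} and the second bounded by $2$ via Lemma~\ref{3.10}~b); this is precisely \eqref{3-13}, with a constant independent of $f,g,\lambda$.

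Finally, for the continuous extension I would use that $\FF^{s',0}(\Omega)$ is dense in $\FF^{s,\sigma}(\Omega)$ — density of the higher-regularity Sobolev and Besov spaces, the manifold $\Gamma$ being smooth enough by (S5) and the regularity bookkeeping already carried out for the local operators. On this dense subspace $R(\lambda)$ coincides with $R_0(\lambda)(1+C(\lambda))^{-1}$, which by Lemma~\ref{3.9} and Lemma~\ref{3.10}~b) is already a bounded operator from $\FF^{s,\sigma}(\Omega)$ to $\EE^{s,\sigma}(\Omega)$; hence $R(\lambda)$ extends uniquely to this operator family. I do not expect a genuinely hard step here: the proof is essentially the assembly of Lemmas~\ref{3.8}--\ref{3.10} with classical theory and a density argument, and the only point demanding real care is the scale bookkeeping described above — that $R_0(\lambda)$, $C(\lambda)$ and $(1+C(\lambda))^{-1}$ are literally the same maps on the scales $(s,\sigma)$ and $(s',0)$ — which is what legitimizes the choice of $\lambda_1$ and guarantees that the classical solution operator and its extension to rough data agree.
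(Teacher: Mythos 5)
Your proposal is correct and follows essentially the same route as the paper: classical unique solvability in $\EE^{s',0}(\Omega)$ from \cite{Agranovich-Faierman-Denk97}, Lemma~\ref{3.10}~b) with $(s',0)$ for invertibility of $1+C(\lambda)$ there, Lemma~\ref{3.9} with $(s',0)$ to place $u=R_0(\lambda)(1+C(\lambda))^{-1}\binom fg$ in $\EE^{s',0}(\Omega)$ and verify it solves \eqref{2-1} via \eqref{3-11}, and then the same two lemmas with $(s,\sigma)$ for the uniform bound \eqref{3-13}. Your added bookkeeping — checking $(s',0)$ satisfies \eqref{3-7}, possibly enlarging $\lambda_1$ so the Neumann-series inverse exists simultaneously on $\FF^{s',0}(\Omega)$ and $\FF^{s,\sigma}(\Omega)$, and noting the two inverses agree on the smaller space — makes explicit some points the paper's proof leaves tacit, but it is the same argument.
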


\begin{proof}
First, we remark that $\FF ^{s',0}(\Omega)$ and $\EE ^{s',0}(\Omega)$ are
classical spaces, and therefore we obtain unique solvability in these spaces
(see, e.g., \cite{Agranovich-Faierman-Denk97}, Theorem~2.1). By Lemma~\ref{3.10}
b) with $(s,\sigma)=(s',0)$, the operator $1+C(\lambda)$ is invertible in
$L(\FF ^{s',0}(\Omega))$,
and from Lemma~\ref{3.9} we get $R_0(\lambda) \in L (\FF ^{s',0}(\Omega),
\EE ^{s',0}(\Omega))$ because of $s'\geq s$. Therefore,
$u:= R(\lambda)\binom fg\in \EE ^{s',0}(\Omega)$. As
\[ \binom{\lambda-A}B u = (1+C(\lambda))(1+C(\lambda))^{-1}\binom f g = \binom f g\]
by Lemma~\ref{3.10} b), $u$ is the unique solution of \eqref{2-1}. Finally,
the a priori-estimate \eqref{3-13} follows from Lemma~\ref{3.9} and  $(1+C(\lambda))^{-1}
\in L(\FF ^{s,\sigma}(\Omega))$.
\end{proof}
The existence of continuous solution operators given by Theorem~\ref{3.11} is the main part of the analysis
of \eqref{2-1}. To formulate the uniqueness of the solution, we have to consider a function space over $\Omega$
where the
boundary operators are well-defined. For this, we apply the theory of Roitberg (\cite{Roitberg96}, \cite{Roitberg99}), which leads to the space $H^s_{p,A,s_0}(\Omega)$ as defined below. Note that for the construction of the solution operator, the results by Roitberg were not used. We still assume \eqref{3-7} to hold.




\begin{definition} \label{3.1.1}
Let $s_0\in\R$ with $s_0\ge s-2m$. Then we define $H^s_{p,A,s_0}(\Omega)$ as the completion
of $C^\infty(\overline\Omega)$ with respect
  to the norm
  \[ \|u\|_{H^s_{p,A,s_0}(\Omega)} := \|u\|_{H_p^s(\Omega)}  + \|Au\|_{H^{s_0}_p(\Omega)}.\]
\end{definition}

\begin{remark}
  \label{3.2.1}
By the continuity of $A\colon H^{s_0+2m}_p(\Omega)\to H^{s_0}_p(\Omega)$ and the condition on~$s_0$, we find that
 \[\|u\|_{H^s_p(\Omega)}\le \|u\|_{H^s_{p,A,s_0}(\Omega)}
   \le C\|u\|_{H^{s_0+2m}_p(\Omega)}\]
for every $u\in C^\infty(\overline\Omega)$. It follows that
 \[H_p^{s_0+2m}(\Omega)\subset H^s_{p,A,s_0}(\Omega)
   \subset H^s_p(\Omega)\]
with dense embeddings.
\end{remark}

\begin{lemma}
 \label{3.3.1}
 Let $s_0\in\R$ with $s_0>-1+\frac1p$ and $s_0\ge s-2m$. Then
 \begin{equation}\label{3-6.2}
 \binom AB \colon H^{s+\sigma}_{p,A,s_0}(\Omega)\to H^{s_0}_p(\Omega) \times
  \prod_{j=1}^m  B_{pp}^{s+\sigma-m_j-1/p}(\Gamma)
 \end{equation}
is well-defined and continuous.
\end{lemma}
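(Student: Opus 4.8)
The plan is to reduce the assertion to an a priori estimate on the dense subspace $C^\infty(\overline\Omega)$ and then pass to the completion. Recall that, by Definition~\ref{3.1.1}, $H^{s+\sigma}_{p,A,s_0}(\Omega)$ is the completion of $C^\infty(\overline\Omega)$ under $\|u\|_{H^{s+\sigma}_p(\Omega)}+\|Au\|_{H^{s_0}_p(\Omega)}$, so it suffices to show that both $u\mapsto Au$ and $u\mapsto B_ju$ are continuous from $(C^\infty(\overline\Omega),\|\cdot\|_{H^{s+\sigma}_{p,A,s_0}(\Omega)})$ into the respective target spaces; the extension to $H^{s+\sigma}_{p,A,s_0}(\Omega)$ is then automatic, which also provides the well-definedness of the two maps on the completion. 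For $A$ there is nothing to prove: $\|Au\|_{H^{s_0}_p(\Omega)}\le\|u\|_{H^{s+\sigma}_{p,A,s_0}(\Omega)}$ holds by the very definition of the norm.

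For the boundary part the task is the trace estimate
\[
\|B_ju\|_{B_{pp}^{s+\sigma-m_j-1/p}(\Gamma)}\le C\big(\|u\|_{H^{s+\sigma}_p(\Omega)}+\|Au\|_{H^{s_0}_p(\Omega)}\big)
\]
for $u\in C^\infty(\overline\Omega)$. I would first reduce $B_j$ to the pure normal traces $\gamma_0,\dots,\gamma_{2m-1}$. Working in boundary coordinates, a term $b_{j\beta}\gamma_0 D^\beta u$ with $|\beta|\le m_j$ equals, up to lower-order curvature corrections on $\Gamma$, a coefficient inheriting the regularity prescribed by (S3) times a tangential differential operator on $\Gamma$ of order $|\beta'|$ applied to $\gamma_{\beta_n}u$; this is meaningful since $\Gamma$ is of class $C^{2m+\lceil r'\rceil}$ by (S5). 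Given $\gamma_lu\in B_{pp}^{s+\sigma-l-1/p}(\Gamma)$ for $0\le l\le 2m-1$, such a term lies in $B_{pp}^{s+\sigma-|\beta|-1/p}(\Gamma)\subset B_{pp}^{s+\sigma-m_j-1/p}(\Gamma)$ after embedding (using $|\beta|\le m_j$), and multiplication by $b_{j\beta}$ keeps it there by the multiplier statement of Lemma~\ref{1.12}~e): for $|\beta|=m_j$ one has $b_{j\beta}\in\BUC^{k_j}$ with $k_j=\lfloor k_j'\rfloor+1\ge\lceil k_j'\rceil$, and for $|\beta|<m_j$ one embeds first and uses $b_{j\beta}\in\BUC^{\lceil k_j'\rceil}$, matching the index $k_j'=|s+\sigma-m_j-1/p|$ of the target space exactly. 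So it remains to prove $\sum_{l=0}^{2m-1}\|\gamma_lu\|_{B_{pp}^{s+\sigma-l-1/p}(\Gamma)}\le C(\|u\|_{H^{s+\sigma}_p(\Omega)}+\|Au\|_{H^{s_0}_p(\Omega)})$ for $u\in C^\infty(\overline\Omega)$.

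This last estimate is exactly what Roitberg's theory (\cite{Roitberg96},\cite{Roitberg99}) provides: $A$ is elliptic on $\Omega$ (being parameter-elliptic), its coefficients are sufficiently smooth by (S1), (S5), and the two hypotheses $s_0\ge s-2m\ge s+\sigma-2m$ (so that $A$ does not drop more than $2m$ orders, i.e.\ the $Au$-term carries at least the ``default'' regularity) and $s_0>-1+\tfrac1p$ (the range in which Roitberg's graph spaces and their boundary traces are well-behaved) place us in the framework where the system $(\gamma_0,\dots,\gamma_{2m-1})$, defined on $C^\infty(\overline\Omega)$, is bounded with respect to $\|u\|_{H^{s+\sigma}_p(\Omega)}+\|Au\|_{H^{s_0}_p(\Omega)}$ and with values in $\prod_{l}B_{pp}^{s+\sigma-l-1/p}(\Gamma)$. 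Combined with the previous paragraph this proves the estimate for $B_j$ on $C^\infty(\overline\Omega)$, and by density $\binom AB$ extends to the claimed continuous map. If one prefers to stay within the machinery of this paper rather than quote Roitberg, the same trace estimate can be obtained by a partition of unity subordinate to the boundary charts $\diffeo_k$ of Section~\ref{sec4}, transforming each boundary piece to a $v_k\in C^\infty_c(\overline{\R^n_+})$ with $\A^kv_k\in H^{\tilde s_0}_p(\R^n_+)$, $\tilde s_0:=\min\{s_0,\,s+\sigma-2m+1\}$ — it is here that $\sigma>-1$ is used, to make the localization commutator $[A,\varphi_k]u$, of order $\le 2m-1$, land in $H^{s+\sigma-2m+1}_p$ — and then applying the elliptic a priori estimate for $\A^k$ in the mixed-smoothness scale together with Corollary~\ref{2.11}.

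The main obstacle I expect is precisely this invocation of Roitberg's results in our setting: we need them for $p\ne 2$, for merely $\BUC$-type coefficients, and for a graph norm in which $s+\sigma$ and $s_0$ may be negative; matching $H^{s+\sigma}_{p,A,s_0}(\Omega)$ with Roitberg's spaces and confirming that the traces are bounded with exactly the stated target orders is where the two conditions on $s_0$ must be used carefully. The remaining ingredients — the coordinate reduction of $B_j$ to the $\gamma_l$ on a curved $C^{2m+\lceil r'\rceil}$-boundary, the Besov multiplier bookkeeping dictated by (S3), and (in the alternative route) the commutator estimate forcing $\sigma>-1$ — are routine.
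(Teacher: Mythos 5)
Your overall route coincides with the paper's: reduce $B_j$ to the normal traces $\gamma_l u$ via tangential operators $M_{jl}$ of order $\le m_j-l$, invoke Roitberg's trace estimate on the graph norm for the range $s+\sigma\le\max_j m_j+\tfrac1p$, and close with the multiplier bookkeeping from Lemma~\ref{1.12}~e); the paper even splits off the easy case $s+\sigma>\max_j m_j+\tfrac1p$, where the classical trace alone suffices. So in outline there is no divergence.

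However, there is a concrete gap at the step you yourself flag. Roitberg's estimate (\cite{Roitberg96}, Theorem~6.1.1 and (6.1.29)) does \emph{not} produce $\|Au\|_{H^{s_0}_p(\Omega)}$ on the right-hand side; it produces $\|Au\|_{\dot H^{s+\sigma-2m}_p(\overline\Omega)}$, that is, the norm in the space of distributions supported in $\overline\Omega$, with the order dictated by $s+\sigma$ rather than by $s_0$. Passing from that to $\|Au\|_{H^{s_0}_p(\Omega)}$ is not the innocuous order comparison $s_0\ge s+\sigma-2m$ you invoke (``$A$ does not drop more than $2m$ orders''), because the two spaces are of different type: one is a quotient (restriction) space, the other a supported-distribution subspace. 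The paper bridges them by choosing $\eps$ with $-1+\tfrac1p+\eps<s_0$ and using the chain
\[ H^{s_0}_p(\Omega)\subset H^{-1+1/p+\eps}_p(\Omega)=\dot H^{-1+1/p+\eps}_p(\overline\Omega)\subset \dot H^{s+\sigma-2m}_p(\overline\Omega), \]
where the middle identity is the Triebel identification $H^{t}_p(\Omega)=\dot H^{t}_p(\overline\Omega)$ valid precisely for $-1+\tfrac1p<t<\tfrac1p$ (\cite{Triebel78}, Theorem~4.3.2/1), and the last inclusion uses $s+\sigma-2m\le\max_j m_j+\tfrac1p-2m\le-1+\tfrac1p$. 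This is exactly where the hypothesis $s_0>-1+\tfrac1p$ does its work; your proposal names it only as ``the range in which Roitberg's graph spaces and their traces are well-behaved'' and does not perform the reduction, leaving the central estimate unproved. Supplying this embedding chain is what would close the argument; the rest of your write-up (the decomposition of $B_j$, the multiplier regularity matching $k_j'$, and the density/extension step) is consistent with the paper.
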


\begin{proof}
  For smooth $u$, we write $B_j u$ in the form
    \begin{equation}\label{3-6.1}
    B_j u   = \gamma_0
  \sum_{|\beta|\le m_j} b_{j\beta}(\cdot) D^\beta u  = \sum_{l=0}^{m_j}
  M_{jl}(x,D')  \gamma_l u ,
  \end{equation}
  where $\gamma_l\colon u\mapsto (\partial_\nu^l u)|_{\Gamma}$ is the
  classical trace and  $M_{jl}(x,D')$ is a differential operator of order
  not greater than $m_j-l$ which contains only derivatives in tangential direction.
We first show that  for all $u\in C^\infty(\overline\Omega)$  and $l=0,\dots, \max_{j} m_j$ we have
  \begin{equation}\label{3-5.1}
   \|\gamma_l u\|_{B_{pp}^{s+\sigma-l-1/p}(\Gamma)} \le C \|u\|_{H^{s+\sigma}_{p,A,s_0}(\Omega)}.
  \end{equation}
  Indeed, if $s+\sigma>\max_j m_j +\frac1p$, this follows from classical trace results, where we can even replace
  the norm on the right-hand side of \eqref{3-5.1} by $\|u\|_{H_p^{s+\sigma}(\Omega)}$.
  If $s+\sigma\le \max_j m_j +\frac1 p$, we first note that we have $s+\sigma>-1+\frac1p$ by \eqref{3-7}.
  Therefore, we can apply  \cite{Roitberg96}, Theorem~6.1.1 and (6.1.29) (see also the text after
  \cite{Roitberg96}, Definition~6.2.1) and obtain
  \begin{equation}\label{3-4.1}
   \|\gamma_l u\|_{B_{pp}^{s+\sigma-l-1/p}(\Gamma)} \le C \big( \|u\|_{H^{s+\sigma}_p(\Omega)}
  + \|Au\|_{\dot H^{s+\sigma-2m}_{p }(\overline\Omega) }\big).
  \end{equation}
Now choose $0<\eps<1$ such that  $$s+\sigma-2m\le \max_j m_j +\frac1 p-2m< -1+\frac1p+\eps<s_0.$$
Then
 $$H^{s_0}_p(\Omega)\subset H^{-1+\frac1p+\eps}_p(\Omega)=
   \dot H^{-1+\frac1p+\eps}_p(\overline\Omega)\subset \dot H^{s+\sigma-2m}_{p }(\overline\Omega), $$
where the equality can be found, e.g., in \cite{Triebel78}, Theorem~4.3.2/1.
Hence we may replace $ \|Au\|_{\dot H^{s+\sigma-2m}_{p }(\overline\Omega) }$
in \eqref{3-4.1} by $\|Au\|_{H_p^{s_0}(\Omega)}$ which yields \eqref{3-5.1}.

From the continuity of $M_{jl}(x,D')\colon B_{pp}^{s+\sigma-l-1/p}(\Gamma)\to B_{pp}^{s+\sigma-m_j-1/p}(\Gamma)$, the estimate in
\eqref{3-5.1}, and the definition of the norm in $H_{p,A,s_0}^{s+\sigma}(\Omega)$ we obtain
\[ \|Au\|_{H_p^{s_0}(\Omega)} + \sum_{j=1}^m \|B_j u \|_{B_{pp}^{s+\sigma-m_j-1/p}(\Gamma)} \le C \|u\|_{H_{p,A,s_0}^{s+\sigma}(\Omega)}\]
for all $u\in C^\infty(\overline\Omega)$. As $C^\infty(\overline\Omega) $ is dense in $H_{p,A,s_0}^{s+\sigma}(\Omega)$,
the operator \eqref{3-6.2} is well-defined by unique  extension and continuous.
\end{proof}

Now we are able to formulate our main result. At first we will cover the general
situation and then, as a corollary, the simpler setting $f\in L^p(\Omega)$.

\begin{theorem}
  \label{3.12}
  Let $(\lambda-A,B)$ be parameter-elliptic in the sector $\Lambda$, and let $s,\sigma\in\R$ with
   $s>\max_j m_j+\frac1p$ and $\sigma\in (-1,0]$.
   Let $-1+\frac1p < s_0 \leq s+\sigma$, and $s_0 \geq s-2m$. We fix $s':=\max\{2m,s_0+2m\}$.  Assume conditions \textnormal{(S1)--(S3)}, \textnormal{(S5)} to
   hold with respect to  $(s,\sigma)$ as well as $(s',0)$. Then there exists a $\lambda_1>0$ such that
  for all $\lambda\in\Lambda$ with $|\lambda|\ge \lambda_1$ and all
  \begin{equation}\label{3-14}
  (f,g)\in H_p^{s_0}(\Omega) \times  \prod\limits_{j=1}^m B_{pp}^{s+\sigma-m_j-1/p}(\Gamma),
  \end{equation}
  the boundary value problem \eqref{2-1} has a unique solution $u\in H^{s+\sigma}_{p,A,s_0}(\Omega)$. This
  solution is given by $u=R(\lambda)\binom fg$ and
  satisfies the a priori-estimate \eqref{3-13}.
\end{theorem}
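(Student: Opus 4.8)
The plan is to deduce the theorem from Theorem~\ref{3.11} together with the Roitberg-type trace estimate of Lemma~\ref{3.3.1}. First I would check that the present hypotheses imply \eqref{3-7} and that the data space \eqref{3-14} embeds into $\FF^{s,\sigma}(\Omega)$: since $s_0\ge s-2m$ we have $s'=\max\{2m,s_0+2m\}\ge s_0+2m\ge s$, so $s'\ge\max\{2m,s\}$, and $H^{s_0}_p(\Omega)\subset H^{s-2m}_p(\Omega)$. Hence Theorem~\ref{3.11} applies with some $\lambda_1\ge\lambda_0$, and for $|\lambda|\ge\lambda_1$ and $(f,g)$ as in \eqref{3-14} the operator $R(\lambda)\in L(\FF^{s,\sigma}(\Omega),\EE^{s,\sigma}(\Omega))$ produces $u:=R(\lambda)\binom fg\in H^{s+\sigma}_p(\Omega)$ obeying the a priori-estimate \eqref{3-13}.

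The core step is to show that $u\in H^{s+\sigma}_{p,A,s_0}(\Omega)$ and that $u$ solves \eqref{2-1}. I would do this by approximation: pick $f_n\in C^\infty(\overline\Omega)$ and $g_{j,n}\in C^\infty(\Gamma)$ with $f_n\to f$ in $H^{s_0}_p(\Omega)$ and $g_{j,n}\to g_j$ in $B^{s+\sigma-m_j-1/p}_{pp}(\Gamma)$. Then $(f_n,g_n)\in\FF^{s',0}(\Omega)$, so by Theorem~\ref{3.11} applied with $(s',0)$ in place of $(s,\sigma)$ the functions $u_n:=R(\lambda)\binom{f_n}{g_n}\in\EE^{s',0}(\Omega)=H^{s'}_p(\Omega)$ are classical solutions of \eqref{2-1}; in particular $Au_n=\lambda u_n-f_n$ in the classical sense. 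Since $s_0\le s+\sigma$ gives $H^{s+\sigma}_p(\Omega)\subset H^{s_0}_p(\Omega)$, the a priori-estimate \eqref{3-13} together with the identity $Au_n=\lambda u_n-f_n$ shows that $(u_n)$ is a Cauchy sequence in $H^{s+\sigma}_{p,A,s_0}(\Omega)$; its limit must coincide with $u$, so $u\in H^{s+\sigma}_{p,A,s_0}(\Omega)$, and passing to the limit in $\binom{\lambda-A}B u_n=\binom{f_n}{g_n}$ by the continuity of $\binom{\lambda-A}B$ from Lemma~\ref{3.3.1} yields $\binom{\lambda-A}B u=\binom fg$.

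For uniqueness I would apply Lemma~\ref{2.9} with $E:=H^{s+\sigma}_{p,A,s_0}(\Omega)$, $F:=H^{s_0}_p(\Omega)\times\prod_{j=1}^m B^{s+\sigma-m_j-1/p}_{pp}(\Gamma)$, $T:=\binom{\lambda-A}B$ (continuous by Lemma~\ref{3.3.1}), $R:=R(\lambda)$, and $E_0:=C^\infty(\overline\Omega)$, which is dense in $E$ by Definition~\ref{3.1.1}. The argument of the previous paragraph, carried out for arbitrary $(f,g)\in F$, shows that $R(\lambda)$ maps $F$ boundedly into $E$ (using $AR(\lambda)\binom fg=\lambda R(\lambda)\binom fg-f\in H^{s_0}_p(\Omega)$) and that $TR(\lambda)=\id_F$, so $T$ is a retraction. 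Moreover $T|_{E_0}$ is injective: a smooth $u$ with $(\lambda-A)u=0$ and $Bu=0$ lies in $\EE^{s',0}(\Omega)$ and vanishes by the uniqueness assertion of Theorem~\ref{3.11}. Lemma~\ref{2.9} then gives injectivity of $T$ on all of $E$, hence uniqueness of the solution, which therefore equals $R(\lambda)\binom fg$ and satisfies \eqref{3-13}.

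The routine part is merely the bookkeeping of the index conditions $s_0\ge s-2m$, $s_0\le s+\sigma$, $s_0>-1+\tfrac1p$, $s'=\max\{2m,s_0+2m\}$; the genuinely delicate step is the density argument, where the very definition of $H^{s+\sigma}_{p,A,s_0}(\Omega)$ as a completion of smooth functions, together with the Roitberg trace theory underlying Lemma~\ref{3.3.1}, is what makes it possible to give meaning to, and then verify, the boundary condition $Bu=g$ for data that is too rough for the classical trace.
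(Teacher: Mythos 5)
Your proof is correct and follows the same strategy as the paper: obtain $u=R(\lambda)\binom fg$ from Theorem~\ref{3.11}, approximate the data by elements of $\FF^{s',0}(\Omega)$, show $(u_n)$ is Cauchy in $H^{s+\sigma}_{p,A,s_0}(\Omega)$ using $Au_n=\lambda u_n-f_n$ together with the a priori-estimate and $s_0\le s+\sigma$, pass to the limit in the boundary conditions via Lemma~\ref{3.3.1}, and obtain uniqueness from Lemma~\ref{2.9}. The only cosmetic difference is that the paper takes $E_0=\EE^{s',0}(\Omega)$ in Lemma~\ref{2.9} rather than $C^\infty(\overline\Omega)$, but both are dense in $H^{s+\sigma}_{p,A,s_0}(\Omega)$ and the injectivity argument works either way.
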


\begin{proof}
By the assumptions on $s$ and $\sigma$,  \eqref{3-7} holds, and for sufficiently large $|\lambda|$,
we can define $u:= R(\lambda)\binom fg\in \EE ^{s,\sigma}(\Omega)$.  By Theorem~\ref{3.11},
$u$ satisfies the a priori-estimate \eqref{3-13}. For the rest of the proof let $\lambda$ be arbitrary but fixed.

We want to show that  $u\in H^{s+\sigma}_{p,A,s_0}(\Omega)$. For this we denote the space in \eqref{3-14} by
$\FF^{s,\sigma,s_0}(\Omega)$ and first note that
$\FF^{s',0}(\Omega)$ is dense in the space $\FF^{s,\sigma,s_0}(\Omega)$, as even   smooth functions
are dense. Therefore, we can
 choose a sequence
$\binom{f_k}{g_k}\in \FF^{s',0}(\Omega)$ with $\| \binom{f_k}{g_k}-\binom fg\|_{\FF^{s,\sigma,s_0}(\Omega)}\to 0$ for $k\to\infty$ and set $u_k := R(\lambda)\binom{f_k}{g_k}$.
By Theorem~\ref{3.11} and Remark~\ref{3.2.1}, we know $u_k\in \EE ^{s',0}(\Omega) = H_p^{s'}(\Omega)\subset
H^{s+\sigma}_{p,A,s_0}(\Omega)$. Moreover, $Bu_k$ is defined
in the classical sense, and we have $\binom{\lambda-A}{B} u_k =
\binom{f_k}{g_k}$ by Theorem~\ref{3.11}. In particular, $Au_k = \lambda u_k - f_k$.
 This  and \eqref{3-13} yield
\begin{align*}
 \|u_k-&u_\ell\|_{H^{s+\sigma}_{p,A,s_0}(\Omega)}= \|u_k-u_\ell\|_{H^{s+\sigma}_{p}(\Omega)}
 + \|Au_k - Au_\ell\|_{H_p^{s_0}(\Omega)} \\
& \le C_\lambda \Big\| \binom{f_k}{g_k}-\binom {f_\ell}{g_\ell}\Big\|_{\FF^{s,\sigma}(\Omega)} +\|f_k-f_\ell \|_{H_p^{s_0}(\Omega)} + |\lambda|\|u_k-u_\ell\|_{H_p^{s_0}(\Omega)}\\
& \le C_\lambda \Big\| \binom{f_k}{g_k}-\binom {f_\ell}{g_\ell}\Big\|_{\FF^{s,\sigma,s_0}(\Omega)} + |\lambda|\,\|u_k-u_\ell\|_{H_p^{s+\sigma}(\Omega)}
 \to 0 \quad (k,\ell\to\infty).
\end{align*}
Recall that here we have used parameter-independent norms and the a priori-estimate
\eqref{3-13}  for fixed
$\lambda$. We also use the condition $s_0 \leq s+\sigma$. We have seen that $(u_k)_{k\in\N}$ is a Cauchy sequence in
$H^{s+\sigma}_{p,A,s_0}(\Omega)$ and therefore convergent to some element $v\in
H^{s+\sigma}_{p,A,s_0}(\Omega)$. By \eqref{3-13}, we see $u_k\to u$ in $H_p^{s+\sigma}(\Omega)$,
and therefore $u=v\in H^{s+\sigma}_{p,A,s_0}(\Omega)$.

As $u\in H^{s+\sigma}_{p,A,s_0}(\Omega)$ and $s_0> -1+\frac1p$, the expression $Bu$ is well-defined in the sense of
Lemma~\ref{3.3.1}, which also yields the continuity of the operator in the respective spaces. Hence, the above approximation shows that $\binom{\lambda-A}B u = \binom fg$,
therefore $u$ is a solution of \eqref{2-1}.
For the uniqueness we make use of Lemma~\ref{2.9} once more, where we take
$E=H^{s+\sigma}_{p,A,s_0}(\Omega)$, $F=\FF^{s,\sigma,s_0}(\Omega)$, $E_0=\EE^{s',0}(\Omega)$, and $T=\binom{\lambda-A}{B}.$
\end{proof}

The conditions on the parameters $s,\sigma$ and on the smoothness are much simpler in the case $f\in L^p(\Omega)$. We obtain the following corollary, which shows that  boundary spaces of order close to $-1$ may appear.

\begin{corollary}
 \label{3.13}
Let $(\lambda-A,B)$ be parameter-elliptic in the sector $\Lambda$, and let  $\tau\in\R$ with $\max_j m_j +\frac1p-1 < \tau \le 2m$
and $\tau\ge 0$ (the last condition is automatically satisfied except for $2m=2$ and $m_1 =0$).
Assume \textnormal{(S1)} and \textnormal{(S3)} to hold for  $r':=2m-\tau$ and $k_j':=2m - m_j-\frac1p$.  Let  $\Omega$ be of class $C^{2m+\lceil r' \rceil}$, and assume \textnormal{(S2)} if $\Omega$ is unbounded.


Then there exists a $\lambda_1>0$ such that
  for all $\lambda\in\Lambda$ with $|\lambda|\ge \lambda_1$ and all
  \[
  (f,g)\in L^p(\Omega) \times \prod\limits_{j=1}^m B_{pp}^{\tau-m_j-1/p}(\Gamma),
  \]
  the boundary
  value problem \eqref{2-1} has a unique solution $u\in H^{\tau}_{p,A,0}(\Omega)$, which satisfies the a priori-estimate \eqref{3-13}.
\end{corollary}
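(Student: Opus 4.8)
The plan is to obtain the corollary as a direct specialization of Theorem~\ref{3.12}, run with $s_0:=0$. Then $H^{s_0}_p(\Omega)=L^p(\Omega)$, the requirement $-1+\tfrac1p<s_0$ holds because $p>1$, one gets $s':=\max\{2m,s_0+2m\}=2m$, and the solution space $H^{s+\sigma}_{p,A,s_0}(\Omega)$ equals $H^{\tau}_{p,A,0}(\Omega)$ as soon as we arrange $s+\sigma=\tau$.

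First I would fix the pair $(s,\sigma)$. If $\tau>\max_j m_j+\tfrac1p$, take $\sigma:=0$ and $s:=\tau$. Otherwise, choose $s$ in the interval $\bigl(\max_j m_j+\tfrac1p,\ \min\{2m,\tau+1\}\bigr)$ and set $\sigma:=\tau-s\in(-1,0)$; this interval is nonempty because $m_j<2m$ forces $\max_j m_j+\tfrac1p<2m$, while the hypothesis $\tau>\max_j m_j+\tfrac1p-1$ gives $\max_j m_j+\tfrac1p<\tau+1$. In either case $s>\max_j m_j+\tfrac1p$, $\sigma\in(-1,0]$, $\tau\le s\le 2m$ and $s+\sigma=\tau$, so that $s_0=0\le\tau=s+\sigma$ (here $\tau\ge0$ is used) and $s_0=0\ge s-2m$; all numerical hypotheses of Theorem~\ref{3.12} are thus met.

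Next I would verify that the smoothness assumptions of the corollary imply \textnormal{(S1)}--\textnormal{(S3)} and \textnormal{(S5)} for both $(s,\sigma)$ and $(s',0)=(2m,0)$. For $(2m,0)$ one has $r'=0$, hence $r=1$, and $k_j'=2m-m_j-\tfrac1p$; for $(s,\sigma)$, using $\tau\le s\le2m$ and $\sigma=\tau-s\le0$, one computes $r'(s-2m,\sigma)=\max\{2m-s,\,s-\tau,\,2m-\tau\}=2m-\tau$ and $k_j'(s,\sigma)=|\tau-m_j-\tfrac1p|$. Since $\tau\le2m$ and $\tau>\max_j m_j+\tfrac1p-1$, one gets $\lfloor|\tau-m_j-\tfrac1p|\rfloor+1\le\lfloor 2m-m_j-\tfrac1p\rfloor+1=2m-m_j$ and $\lceil|\tau-m_j-\tfrac1p|\rceil\le2m-m_j$, while $\lfloor 2m-\tau\rfloor+1\ge1$ and $2m+\lceil 2m-\tau\rceil\ge2m$. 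Hence the assumed regularities $a_\alpha\in\BUC^{r}$ resp.\ $\BUC^{\lceil r'\rceil}$ with $r'=2m-\tau$, $b_{j\beta}\in\BUC^{k_j}$ resp.\ $\BUC^{\lceil k_j'\rceil}$ with $k_j'=2m-m_j-\tfrac1p$, $\Omega\in C^{2m+\lceil r'\rceil}$, and \textnormal{(S2)} in the unbounded case (which is exactly what makes the localization at infinity in the proof of Theorem~\ref{3.12} legitimate for exterior domains), together imply \textnormal{(S1)}--\textnormal{(S3)}, \textnormal{(S5)} for the two required parameter pairs.

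Finally, Theorem~\ref{3.12} applied with these choices gives, for all $\lambda\in\Lambda$ with $|\lambda|$ sufficiently large, a unique $u\in H^{s+\sigma}_{p,A,s_0}(\Omega)=H^{\tau}_{p,A,0}(\Omega)$ solving \eqref{2-1} for every $(f,g)\in L^p(\Omega)\times\prod_{j}B_{pp}^{\tau-m_j-1/p}(\Gamma)$, along with the a priori estimate \eqref{3-13} (whose right-hand side is meaningful since $s\le2m$ gives $f\in L^p(\Omega)\subset H_p^{s-2m}(\Omega)$). I do not expect any genuine obstacle: the analytic substance all lies in Theorem~\ref{3.12}, and the only point needing care is the bookkeeping that the single pair $(r',k_j')=(2m-\tau,\,2m-m_j-\tfrac1p)$ used in the corollary dominates the $\lfloor\cdot\rfloor+1$ and $\lceil\cdot\rceil$ regularity counts arising from both $(s,\sigma)$ and $(2m,0)$.
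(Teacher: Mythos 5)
Your proposal is correct and takes essentially the same approach as the paper: set $s_0=0$ in Theorem~\ref{3.12}, choose $s:=\tau$, $\sigma:=0$ when $\tau>\max_j m_j+\tfrac1p$ and otherwise $s$ slightly above $\max_j m_j+\tfrac1p$ with $\sigma:=\tau-s$, and then check that the assumed smoothness dominates what (S1)--(S3), (S5) require for both $(s,\sigma)$ and $(s',0)=(2m,0)$. The paper states the last verification as a "Note that..." without details; you have simply spelled it out, correctly.
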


\begin{proof}
  We apply Theorem~\ref{3.12} with $s_0=0$.
  If $\tau >\max_j m_j + \frac1p$, we choose $s:=\tau$ and $\sigma := 0$. In the case $\tau \leq \max_j m_j + \frac1p$, we set $s := \max_j m_j + \frac1p + \eps$ and $\sigma := \tau - s$ for $\eps>0$ sufficiently small. Note that for this choice of $(s,\sigma)$ the conditions in Theorem~\ref{3.12} are fulfilled.
\end{proof}

\pagebreak[2]
\section{Boundary value problems with dynamic boundary conditions}
\label{sec5}

As an application of the above results, we consider a boundary value
problem with dynamic boundary conditions, which is related to the linearized Cahn--Hilliard equation
and was discussed in detail in \cite{Pruess-Racke-Zheng06}.
We show that the corresponding operator
generates a holomorphic semigroup in $L^p$. For simplicity, we restrict ourselves to the
model problem situation and do not consider a general domain.
The related model problem
in the half-space has the form (see \cite{Pruess-Racke-Zheng06}, Equation (2.1))
\begin{alignat*}{4}
  (\partial_t+\Delta^2) u & = f && \text{ in } (0,\infty)\times \R^n_+,\\
  \partial_t u + \partial_\nu u -\Delta' u & = g && \text{ on } (0,\infty)\times\R^{n-1},\\
  \partial_\nu \Delta u & = 0 && \text{ on }(0,\infty)\times\R^{n-1}
\end{alignat*}
(plus initial condition), where $\Delta'$ stands for the tangential Laplacian.
Here we have set the constants to $1$ and omitted lower-order terms.
Following a standard approach for boundary value problems with dynamical boundary conditions, we decouple $u=:u_1$ and
$\gamma_0 u=:u_2$ and consider the Cauchy problem in a product space, where now the compatibility condition $u_1 = \gamma_0 u_2$ has to be added.
The corresponding resolvent problem is given by
\begin{equation}\label{4-4}
\begin{alignedat}{4}
  (\lambda+\Delta^2) u_1 & = f && \text{ in } \R^n_+,\\
  \partial_\nu u_1 + (\lambda-\Delta') u_2 & = g && \text{ on }\R^{n-1},\\
  \partial_\nu \Delta u_1 & = 0 && \text{ on }\R^{n-1},\\
  \gamma_0 u_1 - u_2 & = 0 && \text{ on }\R^{n-1}.
\end{alignedat}
\end{equation}
It was shown in \cite{Pruess-Racke-Zheng06}, Remark~2.2, that the related operator generates an analytic $C_0$-semigroup
in the basic space $L^p(\R^n_+)\times B_{pp}^r(\R^{n-1})$ for all $r\in [2-\frac 1p, 3-\frac 1p]$.
In the present paper, we consider the basic space $X:= L^p(\R^n_+)\times L^p(\R^{n-1})$.
  In order to define a suitable operator $A$ representing \eqref{4-4}, we have to verify the parameter-ellipticity of the auxiliary problem below.

  In the following, let $\arg(\cdot)$ denote the argument of a complex number with values in $(-\pi,\pi]$. Furthermore let $\sqrt{\cdot}$ denote the principal branch of the complex
  square root which is holomorphic in $\C\setminus(-\infty,0]$ and for which we
  have $\Re\sqrt z>0$ for all $z\in\C\setminus(-\infty,0]$.
\begin{lemma}
  \label{4.2}
  Let $\theta\in (0,\pi)$. Then the boundary value problem $(\lambda+\Delta^2, \gamma_0,
  \partial_\nu\Delta)$ in $\R^n_+$ is parameter-elliptic in the sector $\Lambda:=\overline{\Sigma_\theta}$, where
  \begin{align*}
      \Sigma_\theta:= \{ z\in\C\backslash\{0\}: |\arg z|<\theta\}.
  \end{align*}
\end{lemma}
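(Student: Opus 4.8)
The plan is to verify the two defining properties of parameter-ellipticity of the boundary value problem $(\lambda+\Delta^2,\gamma_0,\partial_\nu\Delta)$ in $\Lambda=\overline{\Sigma_\theta}$: the interior estimate \eqref{2-5} for the principal symbol of $A=-\Delta^2$, and the Shapiro--Lopatinskii condition at the (single, constant-coefficient) boundary point. Here $2m=4$ and we have two boundary operators, $B_1=\gamma_0$ of order $m_1=0$ and $B_2=\gamma_0\,\partial_\nu\Delta$ of order $m_2=3$. The interior part is immediate: the principal symbol of $A$ is $A_0(\xi)=-|\xi|^4$, so $\lambda-A_0(\xi)=\lambda+|\xi|^4$, and since $\theta<\pi$ there is $c_\theta>0$ with $|\lambda+t|\ge c_\theta(|\lambda|+t)$ for all $\lambda$ with $|\arg\lambda|\le\theta$ and all $t\ge 0$ (elementary sector estimate); taking $t=|\xi|^4$ gives \eqref{2-5}.

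For the Shapiro--Lopatinskii condition I would freeze $\xi'\in\R^{n-1}$ and $\lambda\in\Lambda$ with $(\xi',\lambda)\neq 0$ and study the half-line problem. With $\xi'$ frozen, $\Delta$ becomes the ordinary differential operator $\tfrac{d^2}{dx_n^2}-|\xi'|^2$, so the characteristic equation of $(\lambda+\Delta^2)w=0$ is $\lambda+(\mu^2-|\xi'|^2)^2=0$, i.e.\ $\mu^2=|\xi'|^2\pm i\sqrt\lambda$ with the principal branch of the square root. Set $\tau_\pm:=\sqrt{|\xi'|^2\pm i\sqrt\lambda}$. The crucial observation is $\Re\tau_\pm>0$: for $\lambda\neq 0$ one has $\Re\sqrt\lambda>0$, hence $\Im(|\xi'|^2\pm i\sqrt\lambda)=\pm\Re\sqrt\lambda\neq 0$, so $|\xi'|^2\pm i\sqrt\lambda$ lies strictly in the open upper (resp.\ lower) half-plane, off $(-\infty,0]$, and its principal square root has argument in $(0,\pi/2)$ (resp.\ $(-\pi/2,0)$). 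For $\lambda\neq 0$ the four roots $\pm\tau_+,\pm\tau_-$ are thus distinct, and exactly $-\tau_+,-\tau_-$ have negative real part, so the space of stable (bounded) solutions on $(0,\infty)$ is spanned by $e^{-\tau_+x_n}$ and $e^{-\tau_-x_n}$.

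It then remains to impose the two homogeneous boundary conditions. Using $\partial_\nu=-\partial_n$, one computes for $w=e^{-\tau x_n}$ that $w(0)=1$ and $(\partial_\nu\Delta w)(0)=\tau(\tau^2-|\xi'|^2)$, which for $\tau=\tau_\pm$ equals $\pm i\sqrt\lambda\,\tau_\pm$ since $\tau_\pm^2-|\xi'|^2=\pm i\sqrt\lambda$. Hence the Lopatinskii matrix for the basis $(e^{-\tau_-x_n},e^{-\tau_+x_n})$ is $\left(\begin{smallmatrix}1&1\\ -i\sqrt\lambda\,\tau_-&i\sqrt\lambda\,\tau_+\end{smallmatrix}\right)$, with determinant $i\sqrt\lambda\,(\tau_++\tau_-)$; this is nonzero because $\lambda\neq 0$ and $\Re(\tau_++\tau_-)>0$, so the only stable solution is $w\equiv 0$. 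The degenerate case $\lambda=0$ (then $\xi'\neq 0$ and $\tau_+=\tau_-=|\xi'|$) I would handle directly: the stable solutions are $(c_1+c_2x_n)e^{-|\xi'|x_n}$, the condition $w(0)=0$ forces $c_1=0$, and a short computation gives $(\partial_\nu\Delta w)(0)=-2|\xi'|^2c_2$, so $c_2=0$. This establishes the Shapiro--Lopatinskii condition for all $(\xi',\lambda)\neq 0$, and together with the interior estimate it shows the problem is parameter-elliptic in $\overline{\Sigma_\theta}$. The only genuinely delicate point is the branch bookkeeping needed for $\Re\tau_\pm>0$ together with isolating the degenerate value $\lambda=0$; the algebra of the Lopatinskii determinant itself is routine.
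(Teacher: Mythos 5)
Your proof is correct and follows essentially the same route as the paper: identify the two stable roots $\tau_\pm=\sqrt{|\xi'|^2\pm i\sqrt\lambda}$, observe $\Re\tau_\pm>0$, exploit the identity $\tau_\pm^2-|\xi'|^2=\pm i\sqrt\lambda$ to simplify $\partial_\nu\Delta$, reduce the Lopatinskii condition to nonvanishing of $i\sqrt\lambda(\tau_++\tau_-)$, and treat $\lambda=0$ (double root $|\xi'|$) separately. The only differences are cosmetic: you phrase the second step as a $2\times 2$ Lopatinskii determinant where the paper substitutes $c_1=-c_2$ directly, and you explicitly verify the interior sector estimate for $\lambda+|\xi|^4$, which the paper dismisses as obvious.
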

\begin{proof}
  Obviously, the operator $\lambda+\Delta^2$ is parameter-elliptic in $\Lambda$. To see that the
  Shapiro--Lopatinskii condition holds, we first assume $\lambda\in\Lambda\setminus\{0\}$. Then
  every stable solution of the ODE
  \begin{equation}\label{4-5}
   \big[ \lambda + (\partial_n^2 - |\xi'|^2)^2 \big] w(x_n)  = 0\quad (x_n>0)
  \end{equation}
  is of the form $w(x_n) = c_1 e^{-\tau_1 x_n} + c_2 e^{-\tau_2 x_n}$ with the roots
  $\tau_{1,2} = \tau_{1,2}(|\xi'|,\lambda)$, where
  \begin{equation}\label{4-6}
   \tau_{1,2}(|\xi'|,\lambda) :=  \sqrt{ |\xi'|^2 \pm i\sqrt{\lambda}}.
  \end{equation}
  Note that we have $\Re\tau_j(|\xi'|,\lambda)>0$ for $j=1,2$ and all $\xi'\in\R^{n-1}$.

  The first boundary condition $w(0)=0$ yields $c_1=-c_2$, and from the second boundary condition we
  obtain
  \begin{equation}\label{4-7}
  \begin{aligned}
    0 &= -\partial_n (\partial_n^2- |\xi'|^2) w(0) = \tau_1 (\tau_1^2-|\xi'|^2) c_1
    +\tau_2 (\tau_2^2-|\xi'|^2) c_2 \\
    & = i\sqrt{\lambda} (\tau_1 c_1 - \tau_2 c_2).
  \end{aligned}
  \end{equation}
  Therefore, $0 = \tau_1 c_1 -\tau_2 c_2 = (\tau_1+\tau_2) c_1$. As
  $\Re(\tau_1+\tau_2)>0$, we obtain $c_1=c_2=0$.

  In the case $\lambda=0$, every stable solution of \eqref{4-5} is of the form $w(x_n) = (c_1+c_2x_n)
  e^{-|\xi'|x_n} $. From $w(0)=0$ we obtain $c_1=0$, and
  \[ 0  = -\partial_n (\partial_n^2- |\xi'|^2) w(0) = - 2c_2 |\xi'|^2 \]
  shows $c_1=c_2=0$ for every $\xi'\in\R^{n-1}\setminus\{0\}$.
\end{proof}
\begin{definition}\label{DefA}
Now we are able to define the operator $A\colon X\supset D(A)\to X$ with $X:=L^p(\R^n_+)\times L^p(\R^{n-1})$ by
\begin{align*}
     A(D) u & := \begin{pmatrix}
-\Delta^2 & 0 \\
-\partial_\nu & \Delta'
\end{pmatrix}\quad (u\in D(A)),
\end{align*}
where
\begin{align*}
  D(A) & := \{ u=(u_1,u_2)\in X: \; A(D)u\in X,\; \partial_\nu \Delta u_1 =0,\;
\gamma_0 u_1 -u_2 =0 \textnormal{ on $\R^{n-1}$}\}.
\end{align*}
\end{definition}
\begin{remark}
  \label{4.1}
  a) Due to the parameter-ellipticity of the boundary value problem in Lemma~\ref{4.2} we may use Theorem \ref{2.2} to solve the system
  \begin{equation}\label{4-4'}
\begin{alignedat}{4}
  (\lambda+\Delta^2) u_1 & = f && \text{ in } \R^n_+,\\
  \gamma_0 u_1 & = u_2 && \text{ on }\R^{n-1},\\
   \partial_\nu \Delta u_1 & = 0 && \text{ on }\R^{n-1}.
  \end{alignedat}
\end{equation} Now, the existence of all traces is clear, as we are in the half-space situation and have the spaces $H^{s,\sigma}_p(\R^n_+)$ at our disposal. Using the embeddings $L^p(\R^n_+) \subset H^{0,-4}_p(\R^n_+)$ and $L^p(\R^{n-1})\subset B_{pp}^{-1/p}(\R^{n-1})$ we obtain $u_1 \in H^{4,-4}_p(\R^n_+),$ which shows that $A$ is well-defined,
 as $\partial_\nu u_1 \in B_{pp}^{-1-1/p}(\R^{n-1})$ and $\partial_\nu\Delta u_1
  \in B_{pp}^{-3-1/p}(\R^{n-1})$.

b) Next, we observe that the operator $A$ is densely defined. For this, let $(f,g)\in X = L^p(\R^n_+)\times L^p(\R^{n-1})$,
and let $\epsilon>0$.
We first choose $\phi_2\in C_0^\infty(\R^{n-1})$ with $\|\phi_2-g\|_{L^p(\R^{n-1})}<\epsilon$ and then define $\phi_1\in H_p^4(\R^n_+)$ as
the unique solution of
\begin{alignat*}{4}
(1+\Delta^2) \phi_1 & = 0 &&\text{ in }\R^n_+,\\
\gamma_0 \phi_1 & = \phi_2 &&\text{ on }\R^{n-1},\\
\partial_\nu\Delta \phi_1 & = 0 &&\text{ on }\R^{n-1}.
\end{alignat*}
By definition,
we obtain $(\phi_1,\phi_2)\in D(A)$. In a second step, we choose $\phi_1'\in C_0^\infty(\R^n_+)$ with $\|\phi_1'+\phi_1-f\|_{L^p(\R^n_+)}
< \epsilon$. Then $(\phi_1',0)\in D(A)$, which implies that $u := (\phi_1+\phi_1',\phi_2)\in D(A)$. By construction, we know
$\|u-(f,g)\|_X<2\epsilon$.

  c) Finally, the operator $A$ is closed. To see this, let $(u^k)_{k\in\N}\subset D(A)$ be a sequence with
  $u^k=(u^k_1,u^k_2)\to u=(u_1,u_2)$ in $X$ and $Au^k\to v=(v_1,v_2)$ in $X$.
  Then we have $\Delta^2 u_1^k\to \Delta^2 u_1$ in $H_p^{-4}(\R^n_+)$ due to the continuity
  of the operator $\Delta^2\colon L^p(\R^n_+)\to H_p^{-4}(\R^n_+)$ as well as $-\Delta^2 u_1^k
  \to v_1$ in $L^p(\R^n_+)$ and therefore also in $H_p^{-4}(\R^n_+)$. By uniqueness of the limit,
  we see that $-\Delta^2 u_1 = v_1\in L^p(\R^n_+)$. Similarly, using the spaces from a), one shows $-\partial_\nu u_1
  + \Delta' u_2=v_2 \in L^p(\R^{n-1})$ and $\partial_\nu\Delta u_1 = \gamma_0 u_1-u_2=0$. Therefore,
  $u\in D(A)$ and $Au=v$.
\end{remark}
Now we want to show that the operator $A$ generates a holomorphic semigroup in $L^p(\R^n_+)\times L^p(\R^{n-1})$.
The key step in the proof consists in the analysis of the solution operator of \eqref{4-4} with $f=0$ and $\lambda \in \Sigma_\theta$.
For this, we take the partial Fourier transform $(\mathscr F' u_1)(\xi',x_n)
 =: w(\xi',x_n)=:w(x_n)$ and obtain the ODE \eqref{4-5} as well
 as \eqref{4-7} from the boundary condition $\partial_\nu \Delta u_1=0$. From the proof
 of Lemma~\ref{4.2} we know that $w(x_n)=c_1e^{-\tau_1 x_n} + c_2e^{-\tau_2 x_n}$
 and $\tau_1c_1=\tau_2c_2$, where $\tau_{1,2} =\tau_{1,2}(|\xi'|,\lambda)$ are defined in
 \eqref{4-6}. Inserting this into the second line of \eqref{4-4}, we get
 \[ \tau_1 c_1 + \tau_2 c_2 + (\lambda+|\xi'|^2) (c_1+c_2) = \hat g := \mathscr F'g.\]
 With $c_2 = \frac{\tau_1}{\tau_2} c_1$, this yields
 \[ c_1 = \frac{\tau_2}{(\tau_1+\tau_2)(\lambda+|\xi'|^2)+2\tau_1\tau_2}\; \hat g .\]
 For $\hat u_2 (\xi')= (\mathscr F' u_1)(\xi',0)$, we obtain
 \begin{equation} \label{4-1'}
 \hat u_2 = c_1+c_2 = \frac{\tau_1+\tau_2}{(\tau_1+\tau_2)(\lambda+|\xi'|^2)+2\tau_1\tau_2}
 \hat g =:
 S(|\xi'|,\lambda) \hat g.
 \end{equation}
 Therefore, we have to analyze the symbol $S(|\xi'|,\lambda)$. As we will use the
 bounded $H^\infty$-calculus, we will extend this symbol with respect to the first
 variable to a small sector $\Sigma_\epsilon$.
 We start with a technical result
 on the zeros $\tau_1$ and $\tau_2$.
\begin{lemma}\label{4.3}
Let $\theta\in (\frac\pi2,\pi)$ and $\epsilon \in (0,\frac{\pi-\theta}4)$. 
\begin{enumerate}[a)]
    \item Let $\tau_{1,2}\colon \Sigma_\eps\times \Sigma_\theta\to\C$
be defined by
 \[ \tau_{1,2}(z,\lambda) := \sqrt{z^2\pm i\sqrt{\lambda}}\quad ((z,\lambda) \in
 \Sigma_\epsilon\times \Sigma_\theta).\]
Then $\tau_{1,2} $ is holomorphic in $\Sigma_\eps\times \Sigma_\theta$ and satisfies
\begin{alignat}{4}
    C (|z|+|\lambda|^{1/4}) & \le  |\tau_j(z,\lambda)| && \le C' (|z|+|\lambda|^{1/4})\quad (j=1,2), \label{4-13}\\
    C (|z|+|\lambda|^{1/4}) & \le |\tau_1(z,\lambda)+\tau_2(z,\lambda)| && \le C' (|z|+|\lambda|^{1/4})\label{4-14}
\end{alignat} for suitable constants $C,C'>0$ and all
$(z,\lambda) \in  \Sigma_\epsilon\times \Sigma_\theta$.
\item For all $(z,\lambda) \in  \Sigma_\epsilon\times \Sigma_\theta$
we have
\[
\arg\Big(\frac{\tau_1(z,\lambda)\tau_2(z,\lambda)}{(\tau_1(z,\lambda)+\tau_2(z,\lambda))}\Big)   \in
\begin{cases} (-\epsilon,\tfrac{\theta+\pi}4)
&\text{if } \arg\lambda\in (\frac\pi2,\theta),\\
(-\frac{3\pi}{8},\frac{3\pi}{8})
&\text{if } \arg\lambda\in [-\frac\pi2,\frac\pi2],\\
(-\tfrac{\theta+\pi}4, \epsilon)
&\text{if } \arg\lambda\in (-\theta, -\frac\pi2).
\end{cases}
\]

\end{enumerate}

\end{lemma}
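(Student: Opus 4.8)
The plan is to reduce both parts to elementary estimates on arguments and moduli of \emph{sums} of two complex numbers lying in prescribed cones. Throughout write $\phi:=\arg z$, $\mu:=\arg\lambda$.

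\emph{Part a).} For $(z,\lambda)\in\Sigma_\epsilon\times\Sigma_\theta$ one has $\arg(z^2)=2\phi\in(-2\epsilon,2\epsilon)$ and $\arg(\pm i\sqrt\lambda)=\pm\tfrac\pi2+\tfrac\mu2\in(\pm\tfrac\pi2-\tfrac\theta2,\pm\tfrac\pi2+\tfrac\theta2)$. The key observation is that the angle between $z^2$ and $\pm i\sqrt\lambda$ lies in $(\delta_0,\pi-\delta_0)$ with $\delta_0:=\tfrac\pi2-\tfrac\theta2-2\epsilon$, and $\delta_0\in(0,\tfrac\pi2)$ \emph{precisely because} $\epsilon<\tfrac{\pi-\theta}4$. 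Hence $z^2\pm i\sqrt\lambda\ne0$ and its argument, a strict convex combination of $\arg(z^2)$ and $\arg(\pm i\sqrt\lambda)$, stays in $(-\pi,\pi)$; thus $z^2\pm i\sqrt\lambda\in\C\setminus(-\infty,0]$, so $\tau_{1,2}$ is holomorphic as a composition with $\sqrt{\,\cdot\,}$, and $\arg\tau_1\in(-\epsilon,\tfrac\pi4+\tfrac\theta4)$, $\arg\tau_2\in(-\tfrac\pi4-\tfrac\theta4,\epsilon)$, both inside $(-\tfrac\pi2,\tfrac\pi2)$ since $\theta<\pi$. For the moduli, $|\tau_j|^2=|z^2\pm i\sqrt\lambda|\le|z|^2+|\lambda|^{1/2}\le(|z|+|\lambda|^{1/4})^2$ gives the upper bound in \eqref{4-13}; for the lower bound I use that two nonzero vectors enclosing an angle in $[\delta_0,\pi-\delta_0]$ satisfy $|a+b|^2\ge(1-\cos\delta_0)(|a|^2+|b|^2)$, applied with $a=z^2$, $b=\pm i\sqrt\lambda$, giving $|z^2\pm i\sqrt\lambda|^2\gtrsim|z|^4+|\lambda|\gtrsim(|z|+|\lambda|^{1/4})^4$. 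For \eqref{4-14} the upper bound is the triangle inequality with \eqref{4-13}, and the lower bound follows from $|\tau_1+\tau_2|\ge\Re(\tau_1+\tau_2)\ge\cos(\tfrac\pi4+\tfrac\theta4)(|\tau_1|+|\tau_2|)$, using $|\arg\tau_j|<\tfrac\pi4+\tfrac\theta4<\tfrac\pi2$ and \eqref{4-13}.

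\emph{Part b).} Put $w:=\tfrac{\tau_1\tau_2}{\tau_1+\tau_2}$. Once the branches are pinned down by part a) one has $\tau_1\tau_2=\sqrt{z^4+\lambda}$ and $\tau_1^{-2}+\tau_2^{-2}=\tfrac{2z^2}{z^4+\lambda}$, hence the identity
\[
  w^{-2}=\bigl(\tau_1^{-1}+\tau_2^{-1}\bigr)^2
  =\frac{2z^2}{z^4+\lambda}+\frac{2}{\sqrt{z^4+\lambda}}
  =\frac{2}{\sqrt{z^4+\lambda}}\Bigl(1+\frac{z^2}{\sqrt{z^4+\lambda}}\Bigr).
\]
From part a), $\arg w\in(\arg\tau_2,\arg\tau_1)\subset(-\tfrac\pi4-\tfrac\theta4,\tfrac\pi4+\tfrac\theta4)$ (via $w^{-1}=\tau_1^{-1}+\tau_2^{-1}$), so $\arg(w^{-2})=-2\arg w$ and therefore
\[
  \arg w=\tfrac14\arg(z^4+\lambda)-\tfrac12\arg\Bigl(1+\frac{z^2}{\sqrt{z^4+\lambda}}\Bigr).
\]
Now estimate the two terms. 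The angle between $z^4$ and $\lambda$ is $|\mu-4\phi|<\theta+4\epsilon<\pi$, so $\nu:=\arg(z^4+\lambda)$ lies strictly between $4\phi$ and $\mu$; consequently $\arg\tfrac{z^2}{\sqrt{z^4+\lambda}}=2\phi-\tfrac\nu2$ lies between $0$ and $2\phi-\tfrac\mu2$, so it has the sign of $\phi-\tfrac\mu4$ and modulus $\le2|\phi-\tfrac\mu4|$; since its real part is positive, $\arg\bigl(1+\tfrac{z^2}{\sqrt{z^4+\lambda}}\bigr)$ has the same sign and no larger modulus. Splitting into $\mu>4\epsilon$, $|\mu|\le4\epsilon$, $\mu<-4\epsilon$ and inserting these bounds yields exactly the three claimed intervals; e.g.\ for $\mu>4\epsilon$ one has $\tfrac\nu4\in(\phi,\tfrac\mu4)$ and $-\tfrac12\arg(1+\cdots)\in[0,\tfrac\mu4-\phi]$, whence $\arg w>\tfrac\nu4>\phi>-\epsilon$ and $\arg w<\tfrac\mu2-\phi<\tfrac\theta2+\epsilon<\tfrac{\theta+\pi}4$ (the last step again using $\epsilon<\tfrac{\pi-\theta}4$); the cases $\arg\lambda\in[-\tfrac\pi2,\tfrac\pi2]$ and $\arg\lambda\in(-\theta,-\tfrac\pi2)$ are handled symmetrically.

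\emph{Main obstacle.} The delicate point is the sharp lower bound $-\epsilon$ (and symmetrically $\epsilon$) in the first (third) case: the naive bound $\arg w\in(\arg\tau_2,\arg\tau_1)$ is too weak there, since $\arg\tau_2$ can fall well below $-\epsilon$ when $|z|\ll|\lambda|^{1/4}$. The identity for $w^{-2}$ is what rescues this, because in that regime $|\tau_1|\approx|\tau_2|$ and the $\tau_1$-contribution compensates. The only other thing demanding care is the bookkeeping that every occurring square root is the principal branch and that each argument-addition formula is legitimate (the relevant sums of arguments remain in $(-\pi,\pi]$), which is where the standing hypothesis $\epsilon<\tfrac{\pi-\theta}4$ is used repeatedly.
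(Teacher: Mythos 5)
Your proof is correct, but it takes a genuinely different route from the paper's in both parts. In part a), the paper obtains the two-sided bound \eqref{4-13} by a homogeneity–compactness argument: $\phi(z,\lambda):=|\tau_j(z,\lambda)|(|z|+|\lambda|^{1/4})^{-1}$ is invariant under the anisotropic dilation $(z,\lambda)\mapsto(\rho z,\rho^4\lambda)$, so its positive infimum and finite supremum are attained on the compact quasi-sphere $\{|z|+|\lambda|^{1/4}=1\}$. You instead run a direct angle-separation estimate $|a+b|^2\ge(1-\cos\delta_0)(|a|^2+|b|^2)$ for $a=z^2$, $b=\pm i\sqrt\lambda$, which is more elementary and yields explicit constants; your derivation of \eqref{4-14} from $\Re\tau_1\ge\cos(\tfrac\pi4+\tfrac\theta4)|\tau_1|$ is the same as the paper's. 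In part b), the paper proves the sharp lower bound $\arg w>-\epsilon$ (via $\arg(w/z)\ge0$) by reducing to $\Im\bigl(|1+ic|\sqrt{1-ic}+|1-ic|\sqrt{1+ic}\bigr)\ge 0$ with $c:=\sqrt{\lambda/z^4}$, using the explicit formula $\Im\sqrt{x\pm iy}=\pm\sqrt{(|x\pm iy|-x)/2}$ together with the reverse triangle inequality. Your route through the identity $w^{-2}=\tfrac{2}{\sqrt{z^4+\lambda}}\bigl(1+\tfrac{z^2}{\sqrt{z^4+\lambda}}\bigr)$ and the resulting formula $\arg w=\tfrac14\arg(z^4+\lambda)-\tfrac12\arg\bigl(1+\tfrac{z^2}{\sqrt{z^4+\lambda}}\bigr)$ is a nice conceptual shortcut that produces the same crucial fact $\arg w>\arg z$ without any square-root computation. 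One caveat you should make explicit: your case split at $\mu=\pm4\epsilon$ does not match the lemma's split at $\mu=\pm\frac\pi2$; the sign that actually governs your estimate is that of $\mu-4\phi$ (where $\phi=\arg z$), which is determined when $|\mu|>4\epsilon$ since $|\phi|<\epsilon$. For $\mu\in(4\epsilon,\frac\pi2]$ your bound reads $\arg w\in(\phi,\tfrac\mu2-\phi)\subset(-\epsilon,\tfrac\pi4+\epsilon)\subset(-\tfrac{3\pi}{8},\tfrac{3\pi}{8})$, and for $|\mu|\le4\epsilon$ one gets the cruder but sufficient $|\arg w|<3\epsilon<\tfrac{3\pi}{8}$; stating these interpolation steps would replace the too-brief claim that the $4\epsilon$-split ``yields exactly the three claimed intervals.''
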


\begin{proof}
  a) As $\pm i\sqrt\lambda\in\Sigma_{(\theta+\pi)/2}$, the condition on $\epsilon$ implies $z^2\pm i\sqrt\lambda\in \Sigma_{(\theta+\pi)/2}$.
  This shows that $ \tau_j$ is well-defined and holomorphic in $\Sigma_\epsilon\times\Sigma_\theta$ with
  values in $\Sigma_{(\theta+\pi)/4}$. The function
  $\phi(z,\lambda) := |\tau_j(z,\lambda)| (|z|+|\lambda|^{1/4})^{-1}$ is smooth and quasi-homogeneous of degree $0$ in the sense that
  \[ \phi(\rho z,\rho^4\lambda) = \phi(z,\lambda)\quad (\rho>0,\, z\in\Sigma_\epsilon,\,\lambda\in\Sigma_\theta).\]
  Therefore, its minimum and maximum are attained on the compact set
  \[ M := \big\{ (z,\lambda)\in \overline{\Sigma_\epsilon}\times \overline{\Sigma_\theta} : |z|+|\lambda|^{1/4} = 1\big\}  \]
  (here we note that $\tau_j$ can be extended continuously to $M$). As $\tau_j\not=0$ for all $(z,\lambda)\in M$, we
  obtain $0< C\le \phi(z,\lambda) \le C'<\infty$, which yields \eqref{4-13}.

  Because of   $\tau_j\in \Sigma_{(\theta+\pi)/4}$, there exists a constant $C_\theta>0$ with
  $\Re \tau_j\ge C_\theta |\tau_j|$. Consequently,
  \[ |\tau_1+\tau_2|\ge \Re (\tau_1+\tau_2) \ge \Re\tau_1 \ge C_\theta |\tau_1| \ge C C_\theta (|z|+|\lambda|^{1/4}) .\]
  As the other inequality in \eqref{4-14} is obvious, we obtain a).

b) For $\Re\lambda\geq 0$ we have $\tau_1,\tau_2 \in \Sigma_{3\pi/8}$. Consequently, the same holds for $\frac{\tau_1\tau_2}{\tau_1+\tau_2}=\frac{1}{\tau_1^{-1}+\tau_2^{-1}}$.
Now, let $\arg\lambda\in(\frac\pi2,\theta)$. Analogously, we get $\frac{\tau_1\tau_2}{\tau_1+\tau_2}\in \Sigma_{(\theta+\pi)/4}$.
To see $\arg\left(\frac{\tau_1\tau_2}{\tau_1+\tau_2}\right)>-\epsilon$, it is sufficient to prove $\Im \left(\frac{|\tau_1+\tau_2|^2\tau_1\tau_2}{z|z|^2(\tau_1+\tau_2)} \right)\geq 0$. We set $c:=\sqrt{\lambda/z^4}$ and obtain
\begin{align*}
 \frac{|\tau_1+\tau_2|^2\tau_1\tau_2}{z|z|^2(\tau_1+\tau_2)}  =  \frac{|\tau_1|^2\tau_2 + |\tau_2|^2\tau_1}{z|z|^2}=
     |1+ic|\sqrt{1-ic}+|1-ic|\sqrt{1+ic}.
 \end{align*}
By the condition on $\lambda$ and $z$, we know that $c=a+ib$ for some $a,b> 0$.
In a first step, we show
\begin{align}\label{Im_sum_tau}
    \Im ( \sqrt{1+ic} + \sqrt{1-ic} )\geq 0.
\end{align}
Using the formula 
$\Im\sqrt{x\pm iy} = \pm \sqrt{\frac{|x\pm iy|-x}{2}} $ for all $x\in\R$ and $y>0$, we get
\begin{align*}
    \Im ( \sqrt{1+ic} + \sqrt{1-ic})
   = \sqrt{\frac{|1-b+ia|-(1-b)}{2}} - \sqrt{\frac{|1+b-ia|-(1+b)}{2}},
\end{align*}
such that \eqref{Im_sum_tau} is equivalent to
\begin{align*}
    \sqrt{(1+b)^2+a^2}- \sqrt{(1-b)^2+a^2}\leq 2b,
\end{align*}
which holds by the reverse triangle inequality in $\R^2$ applied to the points $(1+b,a)$ and $(1-b,a)$.
With the inequalities $|1-ic|\geq |1+ic|$ and $\Im \sqrt{1+ic}\geq 0$ as well as~\eqref{Im_sum_tau}, we finally get
\begin{align*}
    \Im (|1+ic|\sqrt{1-ic}+|1-ic|\sqrt{1+ic}) \geq |1+ic| \Im ( \sqrt{1-ic} + \sqrt{1+ic})\geq 0.
\end{align*}
Consequently, the statement in b) holds for $\arg\lambda\in (\frac\pi2,\theta)$. The statement for
$\arg\lambda\in (-\theta,-\frac\pi2)$ follows from $\overline{\tau_{1,2}(z,\lambda)} = \tau_{2,1}(\overline z,\overline\lambda)$.
\end{proof}

\begin{lemma}\label{4.4}
 Let $\theta\in (\frac\pi2,\pi)$ and $\epsilon \in (0,\frac{\pi-\theta}4)$.
 For  $\lambda\in \Sigma_\theta$ and $z\in\Sigma_\epsilon$, define
 \begin{equation}\label{4-10}
  m(z,\lambda) := (\lambda+z^2)S(z,\lambda)=\frac{(\lambda+z^2)(\tau_1+\tau_2)}{(\lambda+z^2)(\tau_1+\tau_2)+2 \tau_1
 \tau_2}.
 \end{equation}
Then $m\colon \Sigma_\epsilon\times\Sigma_\theta\to \C$ is holomorphic and bounded.
\end{lemma}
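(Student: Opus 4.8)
The goal is to show that $m(z,\lambda)$ is holomorphic and bounded on $\Sigma_\epsilon \times \Sigma_\theta$. Holomorphy is essentially free: by Lemma~\ref{4.3}~a), $\tau_{1,2}$ are holomorphic in $\Sigma_\epsilon \times \Sigma_\theta$, hence so is the numerator $(\lambda+z^2)(\tau_1+\tau_2)$ and the denominator $(\lambda+z^2)(\tau_1+\tau_2)+2\tau_1\tau_2$; it then suffices to verify that the denominator does not vanish. So the plan is to prove a uniform lower bound on $|(\lambda+z^2)(\tau_1+\tau_2)+2\tau_1\tau_2|$ from below and a matching upper bound on the numerator, which simultaneously gives nonvanishing of the denominator (hence holomorphy) and boundedness of the quotient.

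First I would rewrite $m$ in the form
\[
 m(z,\lambda) = \frac{1}{1 + \dfrac{2\tau_1\tau_2}{(\lambda+z^2)(\tau_1+\tau_2)}} = \frac{1}{1 + \dfrac{2}{(\lambda+z^2)}\cdot\dfrac{\tau_1\tau_2}{\tau_1+\tau_2}},
\]
which is legitimate wherever $\lambda + z^2 \neq 0$ and $\tau_1+\tau_2\neq 0$; the latter holds by \eqref{4-14}. For $z\in\Sigma_\epsilon$ and $\lambda\in\Sigma_\theta$ with $\theta<\pi$ and $\epsilon$ small, we have $z^2\in\Sigma_{2\epsilon}$ and hence $\lambda+z^2\in\Sigma_\theta$ as well (since both lie in the convex sector $\Sigma_\theta$), so in particular $\lambda+z^2\neq0$ and $|\lambda+z^2|\ge C(|\lambda|+|z|^2)$. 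It therefore remains to bound the quantity
\[
 w := \frac{2}{\lambda+z^2}\cdot\frac{\tau_1\tau_2}{\tau_1+\tau_2}
\]
away from $-1$. Here the key input is Lemma~\ref{4.3}~b), which controls $\arg\big(\tau_1\tau_2/(\tau_1+\tau_2)\big)$, together with $\arg(\lambda+z^2)\in(-\theta,\theta)$; combining these gives $\arg w \in (-\theta',\theta')$ for some $\theta'<\pi$ (one checks the three cases of Lemma~\ref{4.3}~b): when $\Re\lambda\ge0$ one gets $|\arg w|<\theta+\tfrac{3\pi}{8}$, wait — rather, one tracks each case and sees the total argument stays strictly inside $(-\pi,\pi)$ because $\theta<\pi$ and the relevant bounds are $\tfrac{\theta+\pi}{4}$, $\tfrac{3\pi}{8}$, and $\epsilon$, all of which combine with $\theta$ to stay below $\pi$; I would state this as the elementary inequality $\theta + \tfrac{\theta+\pi}{4} < \ldots$ and tune $\epsilon$ accordingly). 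Since $1+w$ is then uniformly bounded away from $0$ because $w$ lies in a proper subsector not containing $-1$, we conclude $|m| = |1+w|^{-1}\le C$.

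Alternatively, and perhaps more cleanly, I would bound numerator and denominator directly using the homogeneity trick from the proof of Lemma~\ref{4.3}: the function $(z,\lambda)\mapsto |(\lambda+z^2)(\tau_1+\tau_2)+2\tau_1\tau_2|\cdot(|z|+|\lambda|^{1/4})^{-4}$ is continuous and quasi-homogeneous of degree $0$, hence attains its positive minimum on the compact ``unit sphere'' $M=\{|z|+|\lambda|^{1/4}=1\}$, provided one checks the denominator is nonzero there; similarly the numerator is $\le C'(|z|+|\lambda|^{1/4})^4$, and the quotient is bounded. The main obstacle is precisely the nonvanishing of the denominator on $M$ (equivalently on all of $\Sigma_\epsilon\times\Sigma_\theta$), and this is exactly where Lemma~\ref{4.3}~b) is needed: the argument condition there forces $2\tau_1\tau_2/(\tau_1+\tau_2)$ and $(\lambda+z^2)$ to have arguments whose difference is bounded away from $\pi$, so their sum cannot be zero. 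Everything else is routine bookkeeping with the estimates \eqref{4-13}--\eqref{4-14}.
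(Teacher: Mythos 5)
Your proposal takes essentially the same route as the paper: rewrite $m = (1+w)^{-1}$ with $w = \tfrac{2\tau_1\tau_2}{(\lambda+z^2)(\tau_1+\tau_2)}$, then use Lemma~\ref{4.3}~b) together with $\arg(\lambda+z^2)$ to confine $w$ to a proper subsector $\Sigma_\phi$ with $\phi<\pi$, whence $|1+w|\ge\sin(\pi-\phi)>0$. One small remark: there is no ``tuning'' of $\epsilon$ to do --- the standing hypothesis $\epsilon<\tfrac{\pi-\theta}{4}$ already guarantees both $\theta+\epsilon<\pi$ and $\tfrac{\theta+\pi}{4}+2\epsilon<\pi$, which is exactly what the case analysis requires.
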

\begin{proof}
   For the boundedness we notice that
  \begin{align*}
      m(z,\lambda)=\frac{1}{1+ \frac{2\tau_1\tau_2}{(\lambda+ z^2)(\tau_1+\tau_2)}  }
  \end{align*}
  and show $\frac{\tau_1\tau_2}{(\lambda+ z^2)(\tau_1+\tau_2)}  \in \Sigma_{\varphi}$ for some $\phi\in(0,\pi)$.
  In the case $\Re \lambda \geq 0$ we have  $  \frac{\tau_1\tau_2}{\tau_1 + \tau_2}  \in \Sigma_{3\pi/8}$ due to Lemma~\ref{4.3}~b), which yields
  \begin{align*}
      \frac{\tau_1\tau_2}{(\lambda+ z^2)(\tau_1+\tau_2)} \in \Sigma_{7\pi/8}.
  \end{align*}
   If $\arg \lambda \in (\frac{\pi}{2},\theta)$ we use again Lemma~\ref{4.3} b) and obtain $\arg \left(\frac{\tau_1\tau_2}{\tau_1 + \tau_2} \right)\in(-\eps, \frac{\theta + \pi}{4}) $. By the condition on $\lambda$ and $z$, we get $\arg(\lambda+z^2)^{-1} \in (-\theta,2\eps)$ and therefore
  \begin{align*}
      \frac{\tau_1\tau_2}{(\lambda+ z^2)(\tau_1+\tau_2)} \in \Sigma_{\theta+\eps}.
  \end{align*}
  For $\arg \lambda \in (-\theta, -\frac{\pi}{2})$ we argue in the same way to see $\frac{\tau_1\tau_2}{(\lambda+ z^2)(\tau_1+\tau_2)} \in \Sigma_{\theta+\eps}$.
  Obviously, $m$ is holomorphic in $\Sigma_\epsilon\times
  \Sigma_\theta$.
\end{proof}

The last two lemmas allow us to prove the main result of this section. We recall that the operator $A$ is described in Definition \ref{DefA}.

\begin{theorem}
  \label{4.5}
  For every $\lambda_0>0$, the operator $A-\lambda_0$ generates a bounded holomorphic $C_0$-semigroup
  of  angle $\frac\pi 2$ in $X:= L^p(\R^n_+)\times L^p(\R^{n-1})$.
  In particular, $A$ generates a holomorphic $C_0$-semigroup of angle $\frac\pi 2$ in $X$.
 Furthermore we obtain $H^2$-regularity of the solution. More precisely for any $\eps >0$ we have
  \[ D(A) \subset H^{2+1/p-\eps}_p(\R^n_+)\times H^2_p(\R^{n-1}). \]
  We may choose $\eps=0$ if $p\geq 2.$
\end{theorem}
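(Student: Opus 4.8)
The strategy is to use the resolvent estimates encoded in Lemma~\ref{4.4} together with Lemma~\ref{4.2} to produce, for $\lambda$ in a sector slightly larger than the right half-plane, a bounded solution operator for the resolvent problem \eqref{4-4} in the basic space $X=L^p(\R^n_+)\times L^p(\R^{n-1})$, and then to conclude via the standard characterization of generators of bounded holomorphic semigroups. First I would fix $\theta\in(\tfrac\pi2,\pi)$ and $\lambda\in\Sigma_\theta$ with $|\lambda|$ large, and split the problem: given $(f,g)\in X$, first solve the auxiliary problem \eqref{4-4'} for $u_1$ with right-hand side $f$ (and homogeneous boundary conditions), which is possible by Lemma~\ref{4.2} and Theorem~\ref{2.2} applied in the spaces $H^{s,\sigma}_{p}(\R^n_+)$, using the embeddings $L^p(\R^n_+)\subset H^{0,-4}_p(\R^n_+)$ and $L^p(\R^{n-1})\subset B^{-1/p}_{pp}(\R^{n-1})$ exactly as in Remark~\ref{4.1}. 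This reduces \eqref{4-4} to the case $f=0$, where $u_2=S(|\xi'|,\lambda)\hat g$ via the explicit symbol computation preceding Lemma~\ref{4.3}, i.e.\ $\lambda u_2 = m(|\xi'|,\lambda)\hat g - |\xi'|^2 u_2$ with $m$ as in \eqref{4-10}.

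The heart of the matter is the resolvent estimate $\|\lambda(\lambda-A)^{-1}\|_{L(X)}\le C$ uniformly for $\lambda\in\Sigma_\theta$, $|\lambda|\ge\lambda_0$. For the second component this is where Lemma~\ref{4.4} enters: since $m(z,\lambda)$ is holomorphic and bounded on $\Sigma_\epsilon\times\Sigma_\theta$, the bounded $H^\infty$-calculus for the tangential Laplacian $-\Delta'$ on $L^p(\R^{n-1})$ (which has a bounded $H^\infty$-calculus of angle $0$) yields that $m(\sqrt{-\Delta'},\lambda)=\lambda(\lambda-\Delta')^{-1}\cdot(\text{correction})$ is a bounded operator on $L^p(\R^{n-1})$ with norm $\le C\|m(\cdot,\lambda)\|_\infty\le C$ uniformly in $\lambda$; combined with the uniform boundedness of $\lambda(\lambda-\Delta')^{-1}$ on $L^p$ this gives the estimate for $u_2$ in terms of $g$. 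For the first component one uses that the map $f\mapsto u_1$ from the auxiliary problem is bounded $L^p(\R^n_+)\to H^{4,-4}_p(\R^n_+)\subset L^p(\R^n_+)$ with the correct $\lambda$-decay (from the parameter-dependent version of Theorem~\ref{2.2}, i.e.\ the isomorphism between $\E$- and $\F$-spaces with parameter-dependent norms), plus the boundedness of the ``lifting'' $u_2\mapsto u_1^{\mathrm{hom}}$ solving the homogeneous equation with $\gamma_0 u_1 = u_2$. Assembling these gives $\|(\lambda-A)^{-1}\|_{L(X)}\le C/|\lambda|$.

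Having the sectorial resolvent estimate on a sector $\Sigma_\theta$ with $\theta>\tfrac\pi2$ together with density of $D(A)$ and closedness of $A$ (both from Remark~\ref{4.1}), the standard generation theorem (e.g.\ the characterization of generators of bounded analytic semigroups of angle $\theta-\tfrac\pi2$) gives that $A-\lambda_0$ generates a bounded holomorphic $C_0$-semigroup; letting $\theta\uparrow\pi$ shows the angle is $\tfrac\pi2$. For the regularity statement $D(A)\subset H^{2+1/p-\eps}_p(\R^n_+)\times H^2_p(\R^{n-1})$, given $u\in D(A)$ write $(f,g):=(\,\lambda u - \text{(first row)}u,\ \lambda u - \text{(second row)}u\,)\in X$ (for any fixed admissible $\lambda$); then $u_2\in L^p(\R^{n-1})$ with $\Delta' u_2\in L^p$ gives $u_2\in H^2_p(\R^{n-1})$, and from Corollary~\ref{2.11}/\,Theorem~\ref{2.10} applied to the triple $(\lambda+\Delta^2,\gamma_0,\partial_\nu\Delta)$ with data $g_1=u_2\in H^2_p(\R^{n-1})\subset B^{2-1/p-\eps}_{pp}(\R^{n-1})$ and $g_2=0$ and $f\in L^p(\R^n_+)$, one reads off $u_1\in H^{s+\sigma}_p(\R^n_+)$ with $s+\sigma$ up to $2+\tfrac1p$; the loss $\eps>0$ comes from needing $s>\max_j m_j+\tfrac1p = \tfrac1p$ strictly and from the embedding $H^2_p(\R^{n-1})\subset B^{2-1/p-1/p'+\dots}$ being sharp, while for $p\ge 2$ the Besov–Bessel embedding $H^2_p\subset B^2_{pp}$ is available so $\eps=0$ works.

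The step I expect to be the main obstacle is the uniform-in-$\lambda$ resolvent bound for the boundary component, specifically making rigorous that $m(\sqrt{-\Delta'},\lambda)$ is bounded on $L^p(\R^{n-1})$ with norm controlled by $\|m(\cdot,\lambda)\|_\infty$: this requires invoking the bounded $H^\infty$-calculus for $-\Delta'$ with the symbol $m$ extended holomorphically to the sector $\Sigma_\epsilon$ (which is precisely why Lemma~\ref{4.3} and Lemma~\ref{4.4} were set up with the extension variable $z\in\Sigma_\epsilon$), and checking that the resulting constant does not degenerate as $|\lambda|\to\infty$ — this is exactly where the uniform sector containment $\tau_1\tau_2/((\lambda+z^2)(\tau_1+\tau_2))\in\Sigma_\varphi$ for a fixed $\varphi<\pi$ from Lemma~\ref{4.4} is essential, since it keeps $m$ away from its pole uniformly.
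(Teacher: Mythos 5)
Your overall route matches the paper's proof: decompose $u_1=u_1^0+u_1'$ as in \eqref{4-8} and \eqref{4-9}, deduce the resolvent estimate for $u_2$ from the bounded $H^\infty$-calculus of $(-\Delta')^{1/2}$ applied to the bounded holomorphic symbol $m(z,\lambda)$ of Lemma~\ref{4.4}, and lift the boundary datum $u_2$ to $u_1'$ via the parameter-dependent estimates of Theorem~\ref{2.2}. You correctly identify the $H^\infty$-calculus bound as the crux and the role of the uniform sector containment in Lemma~\ref{4.4}. (You leave the lifting step implicit; the paper applies Theorem~\ref{2.2} with $s=4$, $\sigma=-4+\tfrac1{2p}$, using the embeddings $L^p(\R^{n-1})\subset B^{-1/(2p)}_{pp,\lambda}(\R^{n-1})$ and $H^{4,-4+1/(2p)}_{p,\lambda}(\R^n_+)\subset H^{1/(2p)}_{p,\lambda}(\R^n_+)$, to gain a factor $|\lambda|^{1/(8p)}$; chained with the $u_2$-estimate this yields $|\lambda|\,\|u_1'\|_{L^p}\le C\|(f,g)\|_X$ for $|\lambda|\ge\lambda_0'$.)

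Your $H^2$-regularity argument, however, has a genuine gap. You assert that $u\in D(A)$ implies $\Delta' u_2\in L^p(\R^{n-1})$, hence $u_2\in H^2_p(\R^{n-1})$ by elliptic regularity. But the definition of $D(A)$ only yields $-\partial_\nu u_1+\Delta' u_2\in L^p(\R^{n-1})$, and you cannot isolate $\Delta'u_2$ from this sum: a priori one only knows $u_1\in H^{4,-4}_p(\R^n_+)$ (Remark~\ref{4.1}~a)), so $\partial_\nu u_1\in B^{-1-1/p}_{pp}(\R^{n-1})$, and it is not known to lie in $L^p$. The paper obtains $u_2\in H^2_p(\R^{n-1})$ differently, namely from the explicit solution formula $u_2=S(|D'|,\lambda)g'$ and the mapping property $S(|D'|,\lambda)\in L(L^p(\R^{n-1}),H^2_p(\R^{n-1}))$, which itself is a consequence of the $H^\infty$-calculus bound on $m((-\Delta')^{1/2},\lambda)=(\lambda+\Delta')S(|D'|,\lambda)$ (see \eqref{S_bounded}). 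Once this is in place, applying Theorem~\ref{2.2} to \eqref{4-12} with $s=4$, $\sigma=-2+\tfrac1p-\eps$ gives $u_1\in H^{2+1/p-\eps}_p(\R^n_+)$. Two minor slips: $\max_j m_j=3$ (from $\partial_\nu\Delta$), not $0$, so the constraint $s>\max_j m_j+\tfrac1p$ forces $s>3+\tfrac1p$ and the paper chooses $s=4$; and the relevant Besov embedding is $H^2_p(\R^{n-1})\subset B^{2-\eps}_{pp}(\R^{n-1})$ (with $\eps=0$ permissible for $p\ge2$), not $B^{2-1/p-\eps}_{pp}$.
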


\begin{proof}
  Let $\theta\in (\frac\pi2,\pi)$, and let $\lambda_0>0$. Then there is some $\lambda_0'>0$ with
\begin{equation}\label{4-19}
\lambda_0+\Sigma_\theta \subset \{ \lambda\in\Sigma_\theta: |\lambda|\ge \lambda_0'\}.
\end{equation}
  We  show that for every $\lambda\in\Sigma_\theta$ with $|\lambda|\ge \lambda_0'$ and every
  $(f,g)\in X$, equation~\eqref{4-4} has a unique solution
  $u=(u_1,u_2)\in D(A)$ and $|\lambda|\,\|u\|_X \le C \|(f,g)\|_X$ with a constant
  not depending on $\lambda$.

  (i) Let $(f,g)\in X$. We construct the unique solution $u=(u_1,u_2)\in D(A)$ of equation~\eqref{4-4} by solving two different boundary value problems. First, we consider the boundary value problem
  \begin{equation}
    \label{4-8}
\begin{alignedat}{4}
  (\lambda+\Delta^2) u_1^0 & = f && \text{ in } \R^n_+,\\
  \gamma_0 u_1^0 & = 0 && \text{ on }\R^{n-1},\\
  \partial_\nu \Delta u_1^0 & = 0 && \text{ on }\R^{n-1}.
\end{alignedat}
  \end{equation}
  By Lemma~\ref{4.2}, this problem is parameter-elliptic, and by classical results
  (see \cite{Grubb95}, Theorem~1.9, or apply Theorem~\ref{2.2} with $s=4$ and $\sigma=0$),
   there exists a unique solution $u_1^0\in H_p^4(\R^n_+)$ of \eqref{4-8}
  and
  \[ |\lambda| \|u_1^0\|_{L^p(\R^n_+)} \le C \|f\|_{L^p(\R^n_+)} ,\]
  where the constant $C$ depends on $\theta$ and $\lambda_0'$ but not on $f$ or $\lambda$.

(ii)  Next, we solve
  \begin{equation}
    \label{4-9}
\begin{alignedat}{4}
  (\lambda+\Delta^2) u_1' & = 0 && \text{ in } \R^n_+,\\
  \partial_\nu u_1' + (\lambda-\Delta') u_2 & = g' && \text{ on }\R^{n-1},\\
  \partial_\nu \Delta u_1' & = 0 && \text{ on }\R^{n-1},\\
   \gamma_0 u_1' & = u_2 && \text{ on }\R^{n-1}
\end{alignedat}
\end{equation}
such that the solution of~\eqref{4-4} is given by $u=(u_1,u_2)$ with $u_1=u_1'+u_1^0$.
Here, we have set $g' := g - \partial_\nu u_1^0$. Note that
\[ \|\partial_\nu u_1^0\|_{L^p(\R^{n-1})}
\le C \|\partial_\nu u_1^0\|_{B_{pp}^{3-1/p}(\R^{n-1})} \le
C \|u_1^0\|_{H_p^4(\R^n_+)} \le C \|f\|_{L^p(\R^n_+)}\]
and therefore
\[  \|g'\|_{L^p(\R^{n-1})}\le C\big(\|g\|_{L^p(\R^{n-1})} + \|f\|_{L^p(\R^n_+)}\big)\le C\|(f,g)\|_X.\]
With the same calculations as those leading up to \eqref{4-1'}, we observe that the boundary value problem~\eqref{4-9} possesses a unique solution $(u_1',u_2)$ satisfying $\hat{u}_2=S(|\xi'|,\lambda) \hat g'$ and therefore $u_2 = S(|D'|,\lambda) g'$.
Since $m$ is bounded due to Lemma~\ref{4.4} and
 \begin{align*}
     (-\Delta')^{1/2}\colon L^p(\R^{n-1})\supset D((-\Delta')^{1/2})= W_p^1(\R^{n-1}) \to L^p(\R^{n-1})
 \end{align*}
  has a bounded $H^\infty$-calculus (see, e.g., \cite{Denk-Saal-Seiler08}, Corollary~2.10),
  the operator
  \begin{align*}
      m((-\Delta')^{1/2},\lambda)= (\lambda+\Delta') S(|D'|,\lambda)
  \end{align*}
  is
  well-defined and a bounded operator in $L^p(\R^{n-1})$. The operator norm can be
  estimated by a constant independent of 
 $\lambda\in\Sigma_\theta$.
This shows that
\begin{align}\label{S_bounded}
    S(|D'|,\lambda)\colon L^p(\R^{n-1})\rightarrow H^2_p(\R^{n-1})
\end{align}
is continuous, and as $\frac{\lambda}{\lambda+z^2}=\frac{1}{1+\frac{z^2}{\lambda}}$ is a bounded holomorphic function as well, we also obtain
the boundedness of $\lambda S(|D'|, \lambda)$ on $L^p(\R^{n-1}).$


(iii) 
With (ii) and $u_2=S(|D'|,\lambda)g'$ we get
\[ |\lambda|\, \|u_2\|_{L^p(\R^{n-1})} \le C \|g'\|_{L^p(\R^{n-1})}\le
C\|(f,g)\|_X.\]
The function $u_1'$ in particular solves the problem
  \begin{equation}
    \label{4-12}
\begin{alignedat}{4}
  (\lambda+\Delta^2) u_1' & = 0 && \text{ in } \R^n_+,\\
   \gamma_0 u_1'  & = u_2 && \text{ on }\R^{n-1},\\
  \partial_\nu \Delta u_1' & = 0 && \text{ on }\R^{n-1}.
\end{alignedat}
\end{equation}
As this boundary value problem is parameter-elliptic due to Lemma~\ref{4.2}, we can apply
Theorem~\ref{2.2} with $s:=4, \, \sigma:= -4 + 1/(2p)$.
We use the embeddings
$L^p(\R^{n-1}) = H_{p,\lambda}^0 (\R^{n-1}) \subset B_{pp,\lambda}^{-1/(2p)}(\R^{n-1})$ and
$H_{p,\lambda}^{4, -4+1/(2p)} (\R^n_+)\subset H_{p,\lambda}^{1/(2p)}(\R^n_+)$ and obtain from
Theorem~\ref{2.2} that $u_1' \in H^{1/(2p)}_{p,\lambda}(\R^n_+)$ satisfies the estimate
\begin{align*}
|\lambda|^{1/(8p)} \|u_1'\|_{L^p(\R^n_+)} & \le C \|u_1'\|_{H_{p,\lambda}^{1/(2p)}(\R^n_+)}
\le C \|u_1'\|_{H_{p,\lambda}^{4, -4+1/(2p)}(\R^n_+)} \\
& \le C \|u_2\|_{B_{pp,\lambda}^{-1/(2p)}(\R^{n-1})} \le C
\|u_2\|_{L^p(\R^{n-1})}
\end{align*}
for $\lambda \in \Sigma_\theta$ with $|\lambda|\geq \lambda_0'$.
Altogether, $u=(u_1,u_2)$ with $u_1=u_1'+u_1^0$ is the unique solution of \eqref{4-4} and fulfills the uniform estimate $|\lambda| \,\|u\|_X\le C \|(f,g)\|_X$ for $\lambda \in \Sigma_\theta$ with $|\lambda|\geq \lambda_0'$.
Writing $\lambda-A= (\lambda-\lambda_0)-(A-\lambda_0)$ and recalling
\eqref{4-19}, we see that $A-\lambda_0$ generates a bounded analytic $C_0$-semigroup of angle $\frac\pi 2$
in $X$, and therefore $A$ generates an analytic $C_0$-semigroup of angle $\frac\pi2$ in $X$.

(iv) From \eqref{S_bounded} we even know that $u_2=S(|D'|,\lambda)g'$ lies in $H^2_p(\R^{n-1})$.
Consequently, we can also apply Theorem~\ref{2.2} to \eqref{4-12} with $s=4$ and $\sigma=-2+\frac1p-\eps$. Hence, taking a fixed $\lambda\in \Sigma_\theta$, we obtain the desired higher regularity due to
\begin{align*}
 \|u_1\|_{H_{p}^{2+1/p-\eps}(\R^n_+)}
&\le C_\lambda \|u_1\|_{H_{p,\lambda}^{4, -2+1/p-\eps}(\R^n_+)} \\
& \le  C_\lambda \|u_2\|_{B_{pp,\lambda}^{2-\eps}(\R^{n-1})} \le  C_\lambda
\|u_2\|_{H^2_p(\R^{n-1})}.
\end{align*}
For $p \geq 2$, the last embedding also holds for $\eps=0$.

\end{proof}

\begin{remark}\label{4.6}
In the above estimates we could show that \[ |\lambda|\, \|u_2\|_{L^p(\R^{n-1})} \le C  \|g'\|_{L^p(\R^{n-1})}\]
holds for all $\lambda\in\Sigma_\theta$. The condition $|\lambda|\ge\lambda_0$ with arbitrary small $\lambda_0>0$ was
only used for the uniform estimate of $\|u_1\|_{L^p(\R^n_+)}$.
\end{remark}

\begin{example}
With exactly the same methods as for  \eqref{4-4}, one can also treat the more simple boundary
value problem with dynamics boundary condition given as
\begin{equation}
  \label{4-2}
  \begin{aligned}
    \lambda u_1 - \Delta u_1 & = f \quad\text{ in }  \R^n_+,\\
    \lambda u_2 + \partial_\nu u_1 & = g \quad\text{ on } \R^{n-1},\\
     \gamma_0  u_1  - u_2 & = 0 \quad\text{ on } \R^{n-1}.
  \end{aligned}
\end{equation}
The operator $A$ related to \eqref{4-2}
acts in the space $X := L^p(\R^n_+)\times L^p(\R^{n-1})$ and is defined by
$D(A):= \{ u=(u_1,u_2) \in X: A(D) u \in X,\, \gamma_0  u_1  - u_2  = 0  \}$, where
\[ A(D) u := \begin{pmatrix}
  \Delta  & 0 \\ -\partial_\nu & 0
\end{pmatrix} u \quad (u\in D(A)).\]
In the same way as above,
but with much simpler resolvent estimates, one sees that $A-\lambda_0$
generates for every $\lambda_0>0$ a bounded holomorphic $C_0$-semigroup in $X$. The symbol which we have
to estimate now has the form
\[ m(z,\lambda) := \frac{\lambda}{\lambda + \sqrt{\lambda+z^2}}\]
for $(z,\lambda)\in\Sigma_\epsilon\times\Sigma_\theta$.
\end{example}


\end{document}